\def\xjp{\vect x_{j'}}
\def\Ac{{\cal A}}
\def\Bc{{\cal B}}
\def\Ec{{\cal E}}
\def\Gc{{\cal G}}
\def\vect#1{\mbox{\boldmath{$#1$}}}
\def\mC{\mathbb{C}}
\def\mE{\mathbb{E}}
\def\mL{\mathbb{L}}
\def\mR{\mathbb{R}}
\def\wG{\widehat{G}}
\def\wU{\widehat{U}}
\def\wV{\widehat{V}}
\def\wf{\widehat{f}}
\def\wg{\widehat{g}}
\def\wP{\widehat P}
\def\hxi{\hat\psi}
\def\Rho{{X}}
\def\rmd{\mathrm{d}}
\def\rme{\mathrm{e}}
\def\rmi{\mathrm{i}}
\def\te{\rme}
\def\sinc{\mathrm{sinc}}
\def\bfrho{\mbox{\boldmath$\rho$}}
\def\bfgamma{\mbox{\boldmath$\gamma$}}
\def\bfPi{\mbox{\boldmath$\Pi$}}
\def\diag{\mathrm{diag}}
\def\rowsupp{\mathrm{rowsupp}}
\def\IP{\textit{Inverse Problems}}
\newcommand{\fl}{\hspace*{-6pc}}
\newcommand{\numberthis}{\addtocounter{equation}{1}\tag{\theequation}}
\newtheorem{theorem}{Theorem}[section]
\newtheorem{lemma}[theorem]{Lemma}
\newtheorem{remark}[theorem]{Remark}
\newtheorem{proposition}[theorem]{Proposition}
\newtheorem{corollary}[theorem]{Corollary}
\let\oldproofname=\proofname
\renewcommand{\proofname}{\rm\bf{\oldproofname}}
\renewcommand{\qedsymbol}{$\blacksquare$}
\renewenvironment{proof}[1][\proofname]{{\noindent\bfseries #1.\,}}{\qedsymbol}
\title{Array imaging of localized objects in homogeneous and heterogeneous media\footnote{This version: December 11, 2015.}}
\author[1]{Anwei Chai\thanks{anwei@math.stanford.edu}}
\author[2]{Miguel Moscoso\thanks{moscoso@math.uc3m.es}}
\author[3]{George Papanicolaou\thanks{papanico@math.stanford.edu}}
\affil[1]{Institute for Computational and Mathematical Engineering,
Stanford University, Stanford, CA 94305 USA}
\affil[2]{Gregorio Mill\'an Institute,
Universidad Carlos III de Madrid, Madrid 28911, Spain}
\affil[3]{Department of Mathematics,
Stanford University, Stanford, CA 94305 USA}
\date{}
\begin{document}
\maketitle
\begin{abstract}
    We present a comprehensive study of the resolution and stability properties of sparse promoting optimization theories 
    applied to narrow band array imaging of localized scatterers. We consider homogeneous and heterogeneous media, and multiple and
    single scattering situations. When the media is homogeneous with strong multiple scattering between scatterers,
    we give a non-iterative formulation to find the locations and reflectivities of the scatterers from a nonlinear
    inverse problem in two steps, using either single or multiple illuminations. We further introduce an approach that uses the top
    singular vectors of the response matrix as optimal illuminations, which improves the robustness of sparse promoting optimization
    with respect to additive noise. When multiple scattering is negligible, the optimization problem becomes linear and can be
    reduced to a hybrid-$\ell_1$ method when optimal illuminations are used. When the media
    is random, and the interaction with the unknown inhomogeneities can be primarily modeled by wavefront distortions,
    we address the statistical stability of these methods.  
    We analyze the fluctuations of the images obtained with the hybrid-$\ell_1$ method, and we show that it is stable with respect to 
    different realizations of the random medium provided the imaging array is large enough. 
    We compare the performance of the hybrid-$\ell_1$ method in random media to the
    widely used Kirchhoff migration and the multiple signal classification methods. 
\end{abstract}
\smallskip\noindent\textbf{AMS classification scheme numbers.} 34B27, 78A46, 78A48

\smallskip\noindent\textbf{Keywords.} array imaging, multiple scattering, random media, sparse promoting optimization, robustness, statistical stability

\section{Introduction}

The recent mathematical theory of compressed sensing \cite{Donoho89,Donoho92,Donoho06,Candes06a,Candes06b} has 
been shown to be very promising in a number of areas as diverse as medicine \cite{Lustig07},
biomedicine \cite{Studer12}, geophysics \cite{Taylor79}, radar \cite{Baraniuk07}, astronomy \cite{Bobin08}, or microscopy \cite{Wu09}.
Most inverse problems in these areas are considered to be underdetermined, meaning that we do not have unique solutions 
and, therefore, it is apparently impossible to identify which one is indeed the correct one.
What makes compressed sensing at once interesting is that, often, the sought solution is known to be 
structured in the sense that it is sparse or compressible, which means that it depends upon a small number of parameters. 
This additional information changes the imaging problem dramatically because we can exploit the sparsity of the image
and look for the simplest one that tends to be the right one.

In this paper, we study narrow-band, active array imaging of a small number of localized scatterers using
both single and multiple illuminations.
The goal is to determine the positions and reflectivities of the scatterers from the echoes recorded at an array of sensors
when a few narrow band signals are sent to probe the medium.
By localized scatterers we mean scatterers whose diameter is small compared to the wavelength. Hence, the Foldy-Lax
approximation to the wave equation can be used to model wave propagation in the medium \cite{F45,L51,L52}. 
The number of scatterers is small because only a small portion of 
the region of interest is occupied by scatterers and, thus, the image we wish to recover is sparse.
We study the case in which the interaction between the scatterers 
is strong so that multiple scattering is important, and the case in which the interaction is small 
so that multiple scattering is negligible. We consider imaging in homogeneous media and imaging in randomly inhomogeneous media with significant
scattering from the inhomogeneities.
We restrict this study to the case in which the full waveform at the array is available for imaging,
which means that in the frequency domain both amplitudes and phases can be measured and recorded.
For the case in which the phases cannot be recorded we refer to \cite{CMP11,Fannjiang12a, Fannjiang12b, Candes13a, Candes13b,Novikov15,Moscoso15}.

In this work we consider narrow-band array systems and, therefore, the frequency diversity content of the data measured at the array is very limited.
There is an extensive literature on imaging techniques that deal with this problem.
Kirchhoff migration \cite{Biondi06}, matched field imaging \cite{Baggeroer93}, and
Multiple Signal Classification (MUSIC) \cite{Schmidt86} are among the most used techniques.
As in narrow-band array imaging the data is scarce, and hence, 
there are infinitely many configurations of scatterers that match the data set,
we formulate active array imaging as an optimization problem with
constraints \cite{CMP13,CMP14}. When multiple scattering between the scatterers is important,
the resulting problem is nonlinear, and therefore, it is apparently impossible
to solve the optimization problem non-iteratively \cite{CMP14}. We show, however,
that the nonlinearity can be avoided through a two-step
process that effectively linearizes the inverse problem. In the first step, we treat the scatterers as equivalent sources 
and we recover their locations and strengths. In the second step, once the locations of the scatterers
are fixed, we recover their true reflectivities using a known relationship between the source
strengths and the scatterer reflectivities. This is an explicit relation that comes from the Foldy-Lax
equations, given the scatterer locations and the illumination. 

When multiple scattering is significant some scatterers may be obscured due to screening effects. Therefore, not all the
scatterers may be recovered from data generated by a single illumination. Indeed, given an array illumination,
multiple scattering may reduce the effective illumination at certain locations due to destructive interferences
of secondary sources coming from all the scatterers. 
To mitigate this additional problem of multiple scattering, we will discuss the use of multiple illuminations. 
The resulting optimization problem with multiple illuminations will be formulated as a joint sparsity
recovery problem where we seek an unknown matrix whose columns share the same support.
Thus, we seek for solution vectors corresponding to different illuminations that have a common support
but have possibly different nonzero values.

The key point of the proposed two-step approach is the possibility of exact recovery of the locations of the equivalent sources 
in the first step.  We give conditions on the array imaging setup and the measurement noise level under which the locations of the sources can be 
recovered exactly. The uniqueness and stability of
the solution are analyzed, showing that the errors are proportional
to the amount of noise in the data with a proportionality factor that depends on the sparsity of the
solution and the mutual coherence of the sensing matrix
\cite{CMP14}. These conditions are given for general imaging configurations. Conditions on the resolution of the images 
that guarantee exact recovery in the paraxial regime are derived in \cite{FSY10} for scatterers whose range is known. 
A more general paraxial model with scatterers at different ranges from the array is considered in \cite{Borcea15}.
The interesting case of imaging scatterers with small off-grid displacements  are studied in \cite{FSY13,Borcea15}.
In  \cite{FSY13}, a simple perturbation method is proposed to reduce the gridding error for off-grid scatterers. The authors in
\cite{Borcea15} also present a very nice discussion on how to interpret the results obtained with $\ell_1$ minimization when
modeling errors due to off-grid displacements are significant.

For the case in which multiple scattering can be ignored,
we introduce a hybrid approach that combines the use of the singular value decomposition (SVD) of the data matrix
with $\ell_1$ minimization \cite{CMP13}. We use the top right singular vectors of this matrix as illumination vectors
to collect the data. Then, we project the data onto the subspace spanned by the top left singular vectors to filter out
the unnecessary data and the noise, and to reduce the dimension of the linear system. Finally, $\ell_1$ optimization is
applied to this reduced linear system  to obtain the sparsest solution. This hybrid-$\ell_1$ method turns out to be
very useful when imaging in random media.

Imaging in random media is fundamentally different from imaging in homogeneous or smoothly varying media.
When the medium is inhomogeneous we know, at best, the large scale, but we cannot known the small scale structure.
Hence, when the small structure of a medium is important we model it as a random spatial process.
In these cases, it is essential to take into consideration the statistical stability of the images,
which refers to the robustness of the imaging methods with respect to different realizations of the medium.
In fact, many of the usual imaging methods used in homogeneous  (or smoothly varying) media fail,
even for broadband signals and large arrays, because the images become noisy and change unpredictably
with the detailed features of the fluctuations of the medium. 
This is the case, for example, of the images obtained with Kirchhoff migration that depend on the particular
realization of the random medium, and thus, they become unstable. Statistical stability typically holds only in broadband,
we refer to  \cite{Borcea05,Borcea06,Borcea11} for details. Here, we consider narrow-band array systems.
We show that, in these cases, large arrays are essential to stabilize the images when the medium is random.
In particular, we show that the hybrid-$\ell_1$ method and MUSIC are efficient and robust when the arrays are large enough. 
We compare, using numerical simulations, the images obtained with these methods with those obtained with Kirchhoff migration.
The numerical simulations show that the hybrid-$\ell_1$ method becomes stable faster than MUSIC as the array size increases.
Kirchhoff migration is, as expected, unstable even for very large arrays.

The analysis of imaging in random media is done using a relatively simple  random phase model for the effects of the random medium.
This model characterizes wave propagation in the high-frequency regime in random media with weak fluctuations and small correlation lengths compared to the wavelength. 
It is widely used, for example, in adaptive optics to compensate for rapidly changing distortions in the received wavefronts due to
the atmospheric turbulence caused by changing temperature and wind conditions. 

The paper is organized as follows. In \S\ref{sec:formulation}, we formulate the array imaging problem in homogeneous media
when the multiple scattering is important. In \S\ref{sec:homogeneous medium}, we describe the optimization methods that
determine the locations and reflectivities of scatterers using a two-step non-iterative approach, with and without multiple illuminations.
In \S\ref{sec:single scattering}, we consider the case where multiple scattering is negligible, and we give a hybrid-$\ell_1$ method
that improves the resolution of the image. In \S\ref{sec:random medium}, we consider imaging in random media.
Using the simple random phase model, we show that the hybrid-$\ell_1$ and the MUSIC methods are statistically
stable provided the arrays are large. The effectiveness of all the methods is illustrated in various numerical
examples with comparisons to other imaging methods in each of the sections. Section \ref{sec:conclusions} contains our conclusions.
The proofs of the theoretical results are given in the appendix.

\section{Data model}\label{sec:formulation}
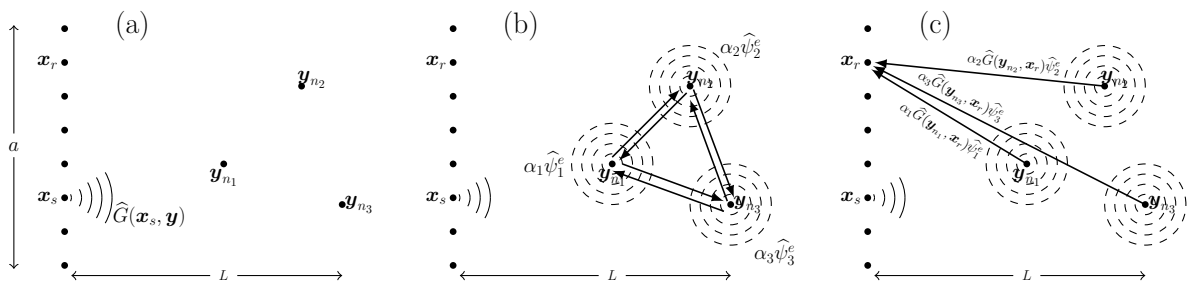
\begin{figure}[htbp]
    \centering
    \begin{tikzpicture}[scale=0.45, transform shape]
       \node at (3.5, 7.1) {\huge{(a)}};
        \draw [<-] (0, -0.1) -- (0, 3.2);
        \node at (0, 3.5) {\Large{$a$\Large}};
        \draw [->] (0, 3.9) -- (0, 7.1);
        \fill[fill=black] (1.5, 0) circle [radius=0.1];
        \fill[fill=black] (1.5, 1) circle [radius=0.1];
        \fill[fill=black] (1.5, 2) circle [radius=0.1];
        \node at (1, 2) {\Large{$\vect x_s$}};
        \fill[fill=black] (1.5, 3) circle [radius=0.1];
        \fill[fill=black] (1.5, 4) circle [radius=0.1];
        \fill[fill=black] (1.5, 5) circle [radius=0.1];
        \fill[fill=black] (1.5, 6) circle [radius=0.1];
        \node at (1, 6) {\Large{$\vect x_r$}};
        \fill[fill=black] (1.5, 7) circle [radius=0.1];
        \draw [<-] (1.7, -0.3) -- (5.8, -0.3);
        \node at (6.2, -0.3) {$L$};
        \draw [->] (6.6, -0.3) -- (9.7, -0.3);
        \fill[fill=black] (6.2, 3) circle [radius=0.1];
        \node at (6.2, 2.6) {\Large{$\vect y_{n_1}$}};
        \fill[fill=black] (8.5, 5.3) circle [radius=0.1];
        \node at (8.8, 5.5) {\Large{$\vect y_{n_2}$}};
        \fill[fill=black] (9.7, 1.8) circle [radius=0.1];
        \node at (10.2, 1.8) {\Large{$\vect y_{n_3}$}};
        \draw (1.7, 2) arc[radius=0.2, start angle=0, end angle=30];
        \draw (1.7, 2) arc[radius=0.2, start angle=0, end angle=-30];
        \draw (2, 2) arc[radius=0.5, start angle=0, end angle=30];
        \draw (2, 2) arc[radius=0.5, start angle=0, end angle=-30];
        \draw (2.3, 2) arc[radius=0.9, start angle=0, end angle=30];
        \draw (2.3, 2) arc[radius=0.9, start angle=0, end angle=-30];
        \draw (2.6, 2) arc[radius=1.2, start angle=0, end angle=30];
        \draw (2.6, 2) arc[radius=1.2, start angle=0, end angle=-30];
        \draw (2.9, 2) arc[radius=1.5, start angle=0, end angle=30];
        \draw (2.9, 2) arc[radius=1.5, start angle=0, end angle=-30];
        \node at (4, 1.5) {\Large{$\wG(\vect x_s, \vect y)$}};
    \end{tikzpicture}
\hspace{0.3cm}
\begin{tikzpicture}[scale=0.45, transform shape]
       \node at (3.5, 7.1) {\huge{(b)}};
        \fill[fill=black] (1.5, 0) circle [radius=0.1];
        \fill[fill=black] (1.5, 1) circle [radius=0.1];
        \fill[fill=black] (1.5, 2) circle [radius=0.1];
        \node at (1, 2) {\Large{$\vect x_s$}};
        \fill[fill=black] (1.5, 3) circle [radius=0.1];
        \fill[fill=black] (1.5, 4) circle [radius=0.1];
        \fill[fill=black] (1.5, 5) circle [radius=0.1];
        \fill[fill=black] (1.5, 6) circle [radius=0.1];
        \node at (1, 6) {\Large{$\vect x_r$}};
        \fill[fill=black] (1.5, 7) circle [radius=0.1];
        \draw [<-] (1.7, -0.3) -- (5.8, -0.3);
        \node at (6.2, -0.3) {$L$};
        \draw [->] (6.6, -0.3) -- (9.7, -0.3);
        \fill[fill=black] (6.2, 3) circle [radius=0.1];
        \node at (6.2, 2.6) {\Large{$\vect y_{n_1}$}};
        \fill[fill=black] (8.5, 5.3) circle [radius=0.1];
        \node at (8.8, 5.5) {\Large{$\vect y_{n_2}$}};
        \fill[fill=black] (9.7, 1.8) circle [radius=0.1];
        \node at (10.2, 1.8) {\Large{$\vect y_{n_3}$}};
        \draw (1.7, 2) arc[radius=0.2, start angle=0, end angle=30];
        \draw (1.7, 2) arc[radius=0.2, start angle=0, end angle=-30];
        \draw (2, 2) arc[radius=0.5, start angle=0, end angle=30];
        \draw (2, 2) arc[radius=0.5, start angle=0, end angle=-30];
        \draw (2.3, 2) arc[radius=0.9, start angle=0, end angle=30];
        \draw (2.3, 2) arc[radius=0.9, start angle=0, end angle=-30];
        \draw (2.6, 2) arc[radius=1.2, start angle=0, end angle=30];
        \draw (2.6, 2) arc[radius=1.2, start angle=0, end angle=-30];
        \draw[dashed] (6.2, 3) circle [radius=0.3];
        \draw[dashed] (6.2, 3) circle [radius=0.6];
        \draw[dashed] (6.2, 3) circle [radius=0.9];
        \draw[dashed] (6.2, 3) circle [radius=1.2];
        \node at (4.2, 3) {\Large{$\alpha_1\widehat{\psi}^e_1$}};
        \draw[dashed] (8.5, 5.3) circle [radius=0.3];
        \draw[dashed] (8.5, 5.3) circle [radius=0.6];
        \draw[dashed] (8.5, 5.3) circle [radius=0.9];
        \draw[dashed] (8.5, 5.3) circle [radius=1.2];
        \node at (10, 6.5) {\Large{$\alpha_2\widehat{\psi}^e_2$}};
        \draw[dashed] (9.7, 1.8) circle [radius=0.3];
        \draw[dashed] (9.7, 1.8) circle [radius=0.6];
        \draw[dashed] (9.7, 1.8) circle [radius=0.9];
        \draw[dashed] (9.7, 1.8) circle [radius=1.2];
        \node at (11, 0.4) {\Large{$\alpha_3\widehat{\psi}^e_3$}};
        \draw [-latex,line width=0.6pt] (6.2, 3.2) -- (8.2, 5.2);
        \draw [-latex,line width=0.6pt] (8.4, 5.1) -- (6.4, 3.1);
        \draw [-latex,line width=0.6pt] (6.5, 3) -- (9.5, 1.9);
        \draw [-latex,line width=0.6pt] (9.5, 1.6) -- (6.2, 2.8);
        \draw [-latex,line width=0.6pt] (9.6, 1.9) -- (8.5, 4.9);
        \draw [-latex,line width=0.6pt] (8.6, 5.2) -- (9.8, 2.0);
    \end{tikzpicture}
\hspace{0.2cm}
    \begin{tikzpicture}[scale=0.45, transform shape]
       \node at (3.5, 7.1) {\huge{(c)}};
        \fill[fill=black] (1.5, 0) circle [radius=0.1];
        \fill[fill=black] (1.5, 1) circle [radius=0.1];
        \fill[fill=black] (1.5, 2) circle [radius=0.1];
        \node at (1, 2) {\Large{$\vect x_s$}};
        \fill[fill=black] (1.5, 3) circle [radius=0.1];
        \fill[fill=black] (1.5, 4) circle [radius=0.1];
        \fill[fill=black] (1.5, 5) circle [radius=0.1];
        \fill[fill=black] (1.5, 6) circle [radius=0.1];
        \node at (1, 6) {\Large{$\vect x_r$}};
        \fill[fill=black] (1.5, 7) circle [radius=0.1];
        \draw [<-] (1.7, -0.3) -- (5.8, -0.3);
        \node at (6.2, -0.3) {$L$};
        \draw [->] (6.6, -0.3) -- (9.7, -0.3);
        \fill[fill=black] (6.2, 3) circle [radius=0.1];
        \node at (6.2, 2.6) {\Large{$\vect y_{n_1}$}};
        \fill[fill=black] (8.5, 5.3) circle [radius=0.1];
        \node at (8.8, 5.5) {\Large{$\vect y_{n_2}$}};
        \fill[fill=black] (9.7, 1.8) circle [radius=0.1];
        \node at (10.2, 1.8) {\Large{$\vect y_{n_3}$}};
        \draw (1.7, 2) arc[radius=0.2, start angle=0, end angle=30];
        \draw (1.7, 2) arc[radius=0.2, start angle=0, end angle=-30];
        \draw (2, 2) arc[radius=0.5, start angle=0, end angle=30];
        \draw (2, 2) arc[radius=0.5, start angle=0, end angle=-30];
        \draw (2.3, 2) arc[radius=0.9, start angle=0, end angle=30];
        \draw (2.3, 2) arc[radius=0.9, start angle=0, end angle=-30];
        \draw (2.6, 2) arc[radius=1.2, start angle=0, end angle=30];
        \draw (2.6, 2) arc[radius=1.2, start angle=0, end angle=-30];
        \draw[dashed] (6.2, 3) circle [radius=0.3];
        \draw[dashed] (6.2, 3) circle [radius=0.6];
        \draw[dashed] (6.2, 3) circle [radius=0.9];
        \draw[dashed] (6.2, 3) circle [radius=1.2];
        \draw[dashed] (8.5, 5.3) circle [radius=0.3];
        \draw[dashed] (8.5, 5.3) circle [radius=0.6];
        \draw[dashed] (8.5, 5.3) circle [radius=0.9];
        \draw[dashed] (8.5, 5.3) circle [radius=1.2];
        \draw[dashed] (9.7, 1.8) circle [radius=0.3];
        \draw[dashed] (9.7, 1.8) circle [radius=0.6];
        \draw[dashed] (9.7, 1.8) circle [radius=0.9];
        \draw[dashed] (9.7, 1.8) circle [radius=1.2];
        \draw[-latex,line width=0.6pt] (6.2, 3) -- (1.6, 5.8) node[pos=0.5, sloped, below]{$\alpha_1\wG(\vect y_{n_1}, \vect x_r)\widehat{\psi}^e_1$};
        \draw[-latex,line width=0.6pt] (8.5, 5.3) -- (1.7, 6) node[pos=0.4, sloped, above]{$\alpha_2\wG(\vect y_{n_2}, \vect x_r)\widehat{\psi}^e_2$};
        \draw[-latex,line width=0.6pt] (9.7, 1.8) -- (1.65, 5.9) node[pos=0.7, sloped, above]{$\alpha_3\wG(\vect y_{n_3}, \vect x_r)\widehat{\psi}^e_3$};
    \end{tikzpicture}
    \caption{Schematic. $(a)$ A linear array with $N=8$ transducers probe a medium sending a spherical wave from $\vect x_s$; the illumination vector is $\vect \wf=[0,0,0,0,0,\wf_6,0,0]$. The medium is either homogeneous or inhomogeneous and the corresponding Green's function is $\wG(\vect x, \vect y)$. (b) There are $M=3$ point-like scatterers with reflectivities $\alpha_j$ at positions $\vect y_{n_j}$, $j=1,2,3$. The interaction between them is strong creating effective (or secondary) sources $\gamma_{n_j}=\alpha_j\widehat{\psi}^e_j$ that depend on all the scatterers' positions and their reflectivities. If those are known, the effective fields $\widehat{\psi}^e_j$ can be computed by solving the Foldy-Lax equations described in Appendix~\ref{appendix:FLmodel}. (c) The response received at $\vect x_r$ is the superposition of all the scattered waves $\widehat{\psi}^s_j=\alpha_j\wG(\vect y_{n_j}, \vect x_r)\widehat{\psi}^e_j$ from the scatterers.}
    \label{fig:schematic}
\end{figure}
Probing of the medium can be done with many different types of arrays, transmitters and recording devices.
Also transducers that can both sense and transmit are usually employed. Besides, the geometric layout of the arrays
depend on the application (acoustics, seismology, radar, ...) and they may be arranged in a transmission or
a backscattering configuration. To fix ideas, we will consider an active array consisting of $N$ transducers
located at positions $\vect x_i$, $i=1,\dots,N$, placed in front of the medium to be probed (see Fig. \ref{fig:schematic} (a)). Here, and in the
rest of the paper, we use boldface lower case letters for vectors, capital letters in boldface for matrices,
and the correponding letters, without boldface, for the entries of the matrices.
To ensure that the transducers behave like an array of aperture $a = (N - 1) h$, and not like separate entities, they are
separated by a distance $h$ of the order of wavelength $\lambda=\frac{2\pi c_0}{\omega}$ of the probing signals,
where $c_0$ is the wave speed in homogeneous medium and $\omega$ is the corresponding frequency.

We will assume that the object we wish to image consists of $M$ randomly positioned point-like scatterers.
The medium can be homogeneous or inhomogeneous. Multiple scattering among the scatterers may or may not be important. 
All the scatterers, with unknown reflectivities $\alpha_j\in\mC$, where $\mC$ stands for the complex field,
and positions $\vect y_{n_j}$, $j=1,\ldots,M$, are within a region of interest called the image window (IW),
which is centered at a distance $L$ from the array. We discretize the IW using a uniform grid of points
$\vect y_j$, $j=1,\ldots,K$, and we introduce the true {\it reflectivity vector}
$$\vect\rho_0=[\rho_{01},\ldots,\rho_{0K}]^T\in\mC^K\, ,$$ 
such that
$\rho_{0k}=\sum_{j=1}^M\alpha_j\delta_{\vect y_{n_j}\vect y_k},\,\, k=1,\ldots,K,$ where
$\delta_{\cdot\cdot}$ is the classical Kronecker delta and $\cdot^T$ is the transpose only operation,
while $\cdot^\ast$ stands for the conjugate transpose.
We further assume that each scatterer is located at one of the $K$ grid points, so
$\{\vect y_{n_1},\ldots,\vect y_{n_M}\}\subset\{\vect y_1,\ldots,\vect y_K\}$. 
For a study of off-grid scatterers we refer to \cite{FSY13,Borcea15}.

To write the data received on the array in a compact form, we define the Green's function vector   
\begin{equation}\label{eq:GreenFuncVec}
\vect \wg(\vect y;\omega)=[\wG(\vect x_{1},\vect y;\omega), \wG(\vect x_{2},\vect y;\omega),\ldots,
\wG(\vect x_{N},\vect y;\omega)]^T\,
\end{equation}
at location $\vect y$ in the IW, where $\wG(\vect x,\vect y;\omega)$ denotes the free-space Green's function of
the (homogeneous or inhomogeneous) medium that characterizes the propagation of a 
signal of angular frequency $\omega$ from point $\vect y$ to point $\vect x$.
When the medium is homogeneous,
\begin{equation}\label{greenfunc}
\wG(\vect x,\vect y;\omega)=\wG_0(\vect x,\vect y;\omega)=\frac{\exp(\rmi\kappa|\vect x-\vect y|)}{4\pi|\vect x-\vect y|}\, ,\quad
 \kappa=\frac{\omega}{c_0}\, ,
\end{equation}
and we have the Green's function vector in a homogeneous medium as
\begin{equation*}
    \vect\wg_0(\vect y; \omega)=[\wG_0(\vect x_1,\vect y;\omega),\wG_0(\vect x_2, \vect y;\omega),\ldots,
    \wG_0(\vect x_N,\vect y;\omega)]^T.
\end{equation*}
If $\vect\wf=[\wf_1,\ldots,\wf_N]^T$ is the illumination vector whose entries are the signals sent from the transmitters in the array, then 
$\vect\wg(\vect y; \omega)\vect\wf$ gives the field at position $\vect y$ in a free-space.

We further introduce the $N\times K$ sensing matrix
\begin{equation}\label{eq:sensingmatrix}
\vect{\Gc}(\omega)=[\vect\wg(\vect y_1;\omega)\,\cdots\,\vect\wg(\vect y_K;\omega)]\, 
\end{equation}
that maps a distribution of sources in the IW to the data received on the array.
With this notation, the full response matrix, can be written as 
\begin{equation}\label{eq:responsematrix0}
\vect\wP=\vect\Gc\diag(\bfrho_0)\mathbf{Z}^{-1}(\bfrho_0)\vect\Gc^T=\vect\Gc\diag(\bfrho_0)\vect\Gc_{FL}^T(\vect\rho_0).
\end{equation}
Here, and in all that follows, we drop the dependence of waves and measurements on the frequency $\omega$.
In \eqref{eq:responsematrix0}, $\mathbf{Z}^{-1}(\bfrho_0)$ denotes the inverse of the Foldy-Lax matrix
which depends on the unknown reflectivity vector $\bfrho_0$ (see Appendix~\ref{appendix:FLmodel}). 
To motivate \eqref{eq:responsematrix0}, consider an illumination vector $\vect\wf=[\wf_1,\ldots,\wf_N]^T$ (see Fig. \ref{fig:schematic}).
Then, $\vect\Gc_{FL}^T(\vect\rho_0)\vect\wf$ gives the total field at each grid point of the IW, including multiple scattering between the scatterers and the interaction with the unknown inhomogeneities of the medium. The total field  $\vect\Gc_{FL}^T(\vect\rho_0)\vect\wf$ is reflected by the scatterers on the grid that have reflectivities given by the vector $\bfrho_0$, and then it is
backpropagated to the array by the matrix $\vect\Gc$. All the available information for imaging is contained in the array response matrix \eqref{eq:responsematrix0}. If transmitters and receivers are located at the same positions, then \eqref{eq:responsematrix0}  is symmetric.

For a fixed array configuration, wave propagation is completely described by the full response matrix $\vect\wP$ \eqref{eq:responsematrix0}.
Indeed, the data received on the array due to an illumination vector $\vect\wf$ is given by
\begin{equation}\label{eq:data}
\vect b=\vect\wP\vect\wf. 
\end{equation}

\section{Active array imaging in homogeneous media}\label{sec:homogeneous medium}
In this section, we formulate the inverse problem of active array imaging  when the medium is homogeneous and, therefore, the wavefronts are not distorted.
In this case, the response matrix can be written as
\begin{equation}\label{eq:responsematrix3}
\vect\wP=\vect\Gc_0\diag(\bfrho_0)\mathbf{Z}^{-1}(\bfrho_0)\vect\Gc_0^T=\vect\Gc_0\diag(\bfrho_0)\vect\Gc_{0FL}^T(\vect\rho_0)\,,
\end{equation}
where $\vect{\Gc}_0=[\vect\wg_0(\vect y_1)\,\cdots\,\vect\wg_0(\vect y_K)]$ denotes the sensing matrix in a homogeneous medium, and
$\mathbf{Z}^{-1}(\bfrho_0)$ is the inverse of the Foldy-Lax matrix \eqref{eq:Z} with $\wG(\vect y_i,\vect y_j)=\wG_0(\vect y_i,\vect y_j)$ 
(see Appendix~\ref{appendix:FLmodel}).
The object to be imaged is an ensemble of small but strong scatterers whose mutual interaction cannot be ignored.
To determine their positions and reflectivities we use the collected data  using a single illumination
$\vect\wf$ in subsection \ref{subsec:single illumination}, and using multiple illuminations $\vect\wf^j$, $j=1,\dots,\upsilon$,
in subsections \ref{subsec:multiple arbitrary illuminations} and \ref{subsec:optimal illuminations}.
In signal processing literature, the corresponding problems are called {\it Single Measurement Vector (SMV)} and
{\it Multiple Measurement Vector (MMV)} problems, respectively.

\subsection{Imaging using single illumination}\label{subsec:single illumination}
For a given illumination vector $\vect\wf$, we define the operator $\vect\Ac_{\wf}$ via
\begin{equation*}
    \vect\Ac_{\wf}[\bfrho_0]\bfrho_0 = \vect\wP\vect\wf,
\end{equation*}
which maps the reflectivity vector $\bfrho_0$ to the data \eqref{eq:data}.
From \eqref{eq:responsematrix3} it follows that
\begin{equation}\label{eq:Af}
    \vect\Ac_{\wf}[\bfrho]=[\wg_{\wf}(\vect y_1)\vect\wg_0(\vect y_1)\,\cdots\,\wg_{\wf}(\vect y_K)\vect\wg_0(\vect y_K)],
\end{equation}
where $\wg_{\wf}(\vect y_j)=\vect\wg^T_{FL}(\vect y_j)\vect\wf$, $j=1,\ldots,K$,
are scalars meaning the total field at the grid points $\vect y_j$ due to
the illumination $\vect\wf$, with $\vect\wg_{FL}(\vect y_j)$ being the $j^\mathrm{th}$
column of the matrix $\vect\Gc_{0FL}$. Using this notation, active array imaging with a single illumination amounts to finding
the unknown reflectivity vector $\bfrho_0\in\mC^K$ from the system of $N$ equations
\begin{equation}\label{eq:single}
    \vect\Ac_{\wf}[\bfrho] \bfrho=\vect b.
\end{equation}
In a typical array imaging configuration, the number of transducers $N$ is much smaller than the number of the grid points $K$
in the IW and, hence, \eqref{eq:single} is underdetermined.
Furthermore, due to the multiple scattering, $\wg_{\wf}(\vect y_j)$, $j=1,\ldots,K$, depend on the unknown
reflectivity vector $\bfrho_0$, which makes \eqref{eq:single} nonlinear with respect to $\bfrho$.
Such nonlinearity makes us think that non-iterative inversion is inapplicable to solve \eqref{eq:single}.
However, by rearranging the terms in these equations, we can reformulate the problem to solve for
the locations of scatterers directly, without any iteration, and then to recover the reflectivities
of each scatterer in a second single step, as we explain next.

\subsubsection{Support recovery}
The localization problem is by far much more difficult than the estimation of reflectivities, which is a straightforward inversion if the former is exact. To localize the scatterers without any iteration, we introduce the {\em effective source vector}
\begin{equation}\label{eq:source vector}
    \bfgamma_{\wf} =\diag(\bfrho)\mathbf{Z}^{-1}(\bfrho)\vect\Gc_0^T\vect\wf\, ,
\end{equation}
and seek for its support. Then, according to \eqref{eq:responsematrix3} $\vect\Ac_{\wf}[\bfrho]\bfrho=\vect\Gc_0\bfgamma_{\wf}$, and
\eqref{eq:single} becomes
\begin{equation}\label{eq:linearsystemsingleillum}
    \vect\Gc_0\bfgamma_{\wf}=\vect b\, ,
\end{equation}
which is linear for the new unknown $\bfgamma_{\wf}$. Note that in the formulation \eqref{eq:single}
the operator $\vect\Ac_{\wf}[\bfrho]$ depends on the illumination $\vect\wf$,
whereas in \eqref{eq:linearsystemsingleillum} the unknown $\bfgamma_{\wf}$ is the one which depends on $\vect\wf$.

It is important to emphasize that due to the existence of  multiple scattering, the solution of \eqref{eq:linearsystemsingleillum} may
not give all the support of $\bfrho_0$. This is not a flaw of the new formulation, but an implicit problem of array
imaging when multiple scattering is important. Indeed, it is possible
that one or several scalars $\wg_{\wf}(\vect y_j)$, $j=1,\ldots,K$, are very small or even zero, and hence,
the corresponding scatterers become dark. This is the well-known {\em screening effect} which makes
scatterers undetectable, and that is manifested in our formulation making some components of the effective
source vector $\bfgamma_{\wf}$ arbitrarily small.

Since \eqref{eq:linearsystemsingleillum} is underdetermined and the effective source vector $\bfgamma_{\wf}$
is sparse because $M\ll N\ll K$, we solve the $\ell_1$ minimization problem
\begin{equation}\label{eq:l1singleillum}
    \min\|\bfgamma_{\wf}\|_{\ell_1}\qquad\text{s.t.}\qquad\vect\Gc_0\bfgamma_{\wf}=\vect b
\end{equation}
when data is noiseless. When the data is contaminated by a noise vector $\vect e$ with finite energy, 
we then solve the relaxed problem
\begin{equation}\label{eq:l1singleillumnoise}
    \min\|\bfgamma_{\wf}\|_{\ell_1}\qquad\text{s.t.}\qquad\|\vect\Gc_0\bfgamma_{\wf}-\vect b\|_{\ell_2}<\delta\, ,
\end{equation}
for some given positive constant $\delta$. In \eqref{eq:l1singleillum} and \eqref{eq:l1singleillumnoise},
$\|\bfgamma\|_{\ell_1}=\sum_{i=1}^K|\gamma_i|$ and $\|\bfgamma\|_{\ell_2}=\sqrt{\sum_{i=1}^K|\gamma_i|^2}$.

The following theorem gives conditions under which \eqref{eq:l1singleillum} recovers the positions
and the strengths of the effective sources exactly if the data is noiseless. The proof follows that
in \cite{CMP13,CMP14}.

\begin{theorem}\label{thm0}
Assume that the resolution of the IW is such that
\begin{equation}\label{eq:mutualcoherence}
        \max_{i\neq j}\left|\frac{\vect\wg_0^\ast(\vect y_i)\vect\wg_0(\vect y_j)}{\|\vect\wg_0(\vect y_i)\|_{\ell_2}\|\vect\wg_0(\vect y_j)\|_{\ell_2}}\right|<\epsilon\,.
\end{equation}
If the number of effective sources $M$ is such that $\epsilon\, M < 1/2$,
then $\bfgamma_{0\wf}=\diag(\bfrho_0)\mathbf{Z}^{-1}(\bfrho_0)\vect\Gc_0^T\vect\wf$
is the unique solution to \eqref{eq:l1singleillum} with support fully contained by that of $\bfrho_0$.
\end{theorem}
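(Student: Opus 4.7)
The plan is to reduce the claim to a classical Donoho--Huo--Elad style uniqueness theorem for $\ell_1$ minimization under a mutual-coherence assumption: I need to (i) verify that $\bfgamma_{0\wf}$ is feasible and sufficiently sparse, and (ii) use \eqref{eq:mutualcoherence} to rule out any competing minimizer.

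Admissibility is immediate from the data model. Substituting the definition \eqref{eq:source vector} of $\bfgamma_{0\wf}$ into $\vect\Gc_0\bfgamma_{0\wf}$ and comparing with \eqref{eq:responsematrix3} and \eqref{eq:data} gives
\begin{equation*}
\vect\Gc_0\bfgamma_{0\wf}=\vect\Gc_0\diag(\bfrho_0)\mathbf{Z}^{-1}(\bfrho_0)\vect\Gc_0^T\vect\wf=\vect\wP\vect\wf=\vect b,
\end{equation*}
so $\bfgamma_{0\wf}$ satisfies the constraint of \eqref{eq:l1singleillum}. The leading factor $\diag(\bfrho_0)$ in \eqref{eq:source vector} shows $\mathrm{supp}(\bfgamma_{0\wf})\subset\mathrm{supp}(\bfrho_0)$, so $\bfgamma_{0\wf}$ is $M$-sparse.

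For uniqueness, let $\bfgamma^{\star}$ be any other feasible point and set $\vect h=\bfgamma^{\star}-\bfgamma_{0\wf}\in\ker(\vect\Gc_0)\setminus\{0\}$, with $T=\mathrm{supp}(\bfgamma_{0\wf})$ of cardinality at most $M$. The triangle inequality yields
\begin{equation*}
\|\bfgamma^{\star}\|_{\ell_1}-\|\bfgamma_{0\wf}\|_{\ell_1}\geq\|\vect h_{T^c}\|_{\ell_1}-\|\vect h_T\|_{\ell_1},
\end{equation*}
so it is enough to establish the strict null-space bound $\|\vect h_T\|_{\ell_1}<\|\vect h_{T^c}\|_{\ell_1}$. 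To produce this bound I would take the inner product of $\vect\Gc_0\vect h=0$ with each column $\vect\wg_0(\vect y_j)$, isolate the diagonal term $h_j\|\vect\wg_0(\vect y_j)\|_{\ell_2}^2$, bound each off-diagonal inner product using \eqref{eq:mutualcoherence}, and sum the resulting pointwise inequalities over $j\in T$. Rearranging gives the classical Donoho--Huo--Elad bound $|T|<\tfrac12(1+1/\epsilon)$, which is strict precisely under $\epsilon M<1/2$.

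The step that requires the most care, and that I expect to be the main obstacle, is the mismatch between \eqref{eq:mutualcoherence}, which is the coherence of the \emph{column-normalized} sensing matrix, and \eqref{eq:l1singleillum}, which uses the unnormalized $\vect\Gc_0$: the columns $\vect\wg_0(\vect y_j)$ have distance-dependent $\ell_2$ norms, so the weighted factors produced by the coherence argument have to be translated back to the original $\ell_1$ objective without loss. I would handle this by introducing the diagonal rescaling $\vect D=\diag(\|\vect\wg_0(\vect y_j)\|_{\ell_2})$, working with $\widetilde{\vect\Gc}_0=\vect\Gc_0\vect D^{-1}$ (which has unit-norm columns and mutual coherence bounded by $\epsilon$) and $\widetilde{\bfgamma}=\vect D\bfgamma$, running the standard mutual-coherence argument of \cite{CMP13,CMP14} in the rescaled variables, and then transferring the inequality back to the original $\ell_1$ norm using the diagonal weights. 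Once this normalization bookkeeping is done, the remaining manipulations are the textbook coherence calculation and the threshold $\epsilon M<1/2$ emerges exactly as in the Donoho--Huo theorem.
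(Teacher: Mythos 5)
The paper does not actually prove Theorem~\ref{thm0} in-house: it defers to \cite{CMP13,CMP14}, where the argument runs through the Fuchs--Tropp exact recovery condition $\max_{k\notin T}\|\vect\Gc_{0,T}^{\dagger}\vect\wg_0(\vect y_k)\|_{\ell_1}<1$ (the constant $1-2M\epsilon+\epsilon=1-(2M-1)\epsilon$ appearing in Theorem~\ref{theorem:smv} is the tell-tale of that route). Your route --- feasibility of $\bfgamma_{0\wf}$ plus a null space property $\|\vect h_T\|_{\ell_1}<\|\vect h_{T^c}\|_{\ell_1}$ extracted from the coherence bound --- is the more elementary Donoho--Huo/Donoho--Elad argument and is a legitimate alternative; your feasibility and support-containment steps ($\vect\Gc_0\bfgamma_{0\wf}=\vect\wP\vect\wf=\vect b$ by \eqref{eq:responsematrix3} and \eqref{eq:data}, and $\mathrm{supp}(\bfgamma_{0\wf})\subset\mathrm{supp}(\bfrho_0)$ from the leading $\diag(\bfrho_0)$) are exactly right, and $\epsilon M<1/2$ does imply $M<\tfrac12(1+1/\epsilon)$.

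The step you leave as a sketch, however, is the one that does not close as written. Testing $\vect\Gc_0\vect h=0$ against the column $\vect\wg_0(\vect y_j)$ and setting $w_j=\|\vect\wg_0(\vect y_j)\|_{\ell_2}$, $u_j=w_j|h_j|$, gives $u_j\le\tfrac{\epsilon}{1+\epsilon}\|\vect u\|_{\ell_1}$ and hence, after summing over $T$ with $|T|\le M$ and $\epsilon M<1/2$, the inequality $\|\vect u_T\|_{\ell_1}<\tfrac{1}{1+2\epsilon}\|\vect u_{T^c}\|_{\ell_1}$. This is the null space property for the \emph{weighted} norm $\sum_j w_j|\gamma_j|$, not for the objective $\|\bfgamma\|_{\ell_1}$ of \eqref{eq:l1singleillum}. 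Transferring back to the unweighted norm costs the factor $\max_k w_k/\min_j w_j$, so your claim that the inequality can be moved back ``without loss'' is false in general: you need either (i) the column norms to be (essentially) equal --- which is what the cited references implicitly use, and which holds in the regime where the IW diameter is small compared to $L$ so that $w_j$ is nearly constant over the grid --- with the quantitative requirement $\max_k w_k/\min_j w_j\le 1+2\epsilon$ absorbing the available slack, or (ii) to restate the theorem for the weighted functional $\|\vect D\bfgamma\|_{\ell_1}$, $\vect D=\diag(w_j)$. Either fix is easy, but one of them must be made explicit; as it stands, the normalization bookkeeping you flag as the main obstacle is indeed the one unproved step.
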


\begin{remark}
The condition \eqref{eq:mutualcoherence} of the sensing matrix $\vect\Gc_0$ is determined by the
array imaging configuration. It is a measure of how linearly independent the columns of the sensing matrix are and it is related to the array geometry as shown in \cite{CMP14}.
\end{remark}

\begin{remark}
It turns out that to prove the result in Theorem \ref{thm0}, the condition \eqref{eq:mutualcoherence} has to be satisfied only on
the support of $\bfgamma_{0\wf}$. This means that if the distance between the effective sources is
known a priori to be large so \eqref{eq:mutualcoherence} holds for the set of indices corresponding
to its support, the discretization of the IW can be as small as we want. 
\end{remark}

The next theorem provides an important stability result for the problem \eqref{eq:l1singleillumnoise},
the proof of which is given by Theorem~$4.3$ in \cite{CMP14}.

\begin{theorem}\label{theorem:smv}
    For a given array imaging configuration, with resolution condition given by \eqref{eq:mutualcoherence}, the solution $\bfgamma_{\star\wf}$ to \eqref{eq:l1singleillumnoise} satisfies
    \begin{equation}\label{eq:SMVstability}
        \|\bfgamma_{\star\wf}-\bfgamma_{0\wf}\|_{\ell_2}\le\frac{\delta}{\sqrt{1-(M-1)\epsilon}},
    \end{equation}
    provided $\delta\ge\|\vect e\|_{\ell_2}\sqrt{1+\frac{M(1-(M-1)\epsilon)}{(1-2M\epsilon+\epsilon)^2}}$, where $\vect e$ is the noise vector added to the data.
    Moreover, the support of $\bfgamma_{\star\wf}$ is fully contained in that of $\bfgamma_{0\wf}$, and all the components satisfying 
    \begin{equation}\label{eq:SMVbound}
        |(\bfgamma_{0\wf})_j|>\delta/\sqrt{1-(M-1)\epsilon}
    \end{equation}
    are within the support of $\bfgamma_{\star\wf}$.
\end{theorem}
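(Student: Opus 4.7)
The plan is to adapt the standard coherence-based stability argument for $\ell_1$ minimization, following the outline of Theorem~4.3 in \cite{CMP14}, but tracked carefully so that both the error bound \eqref{eq:SMVstability} and the support containment come out simultaneously. First, I would check feasibility: since $\bfgamma_{0\wf}$ solves $\vect\Gc_0\bfgamma_{0\wf}=\vect b-\vect e$, we have $\|\vect\Gc_0\bfgamma_{0\wf}-\vect b\|_{\ell_2}=\|\vect e\|_{\ell_2}\le\delta$, so $\bfgamma_{0\wf}$ is admissible in \eqref{eq:l1singleillumnoise}. Hence the $\ell_1$ minimizer satisfies $\|\bfgamma_{\star\wf}\|_{\ell_1}\le\|\bfgamma_{0\wf}\|_{\ell_1}$. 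Setting $\vect h=\bfgamma_{\star\wf}-\bfgamma_{0\wf}$ and letting $T=\mathrm{supp}(\bfgamma_{0\wf})$ with $|T|\le M$, the standard $\ell_1$ cone argument yields $\|\vect h_{T^c}\|_{\ell_1}\le\|\vect h_T\|_{\ell_1}$, so $\vect h$ is effectively concentrated on the $M$-sparse set $T$.

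Next I would invoke the mutual-coherence hypothesis \eqref{eq:mutualcoherence} in the Gershgorin/Gram-matrix form: for any vector supported on $T$, the Gram submatrix $\vect G_T=\vect\Gc_{0,T}^\ast\vect\Gc_{0,T}$ (with normalized columns) has its eigenvalues bounded below by $1-(M-1)\epsilon$. Combined with $\|\vect h_{T^c}\|_{\ell_1}\le\|\vect h_T\|_{\ell_1}$ this yields the restricted-type bound
\begin{equation*}
\|\vect h\|_{\ell_2}^2\le\frac{1}{1-(M-1)\epsilon}\,\|\vect\Gc_0\vect h\|_{\ell_2}^2.
\end{equation*}
The triangle inequality in the constraint gives $\|\vect\Gc_0\vect h\|_{\ell_2}\le\|\vect\Gc_0\bfgamma_{\star\wf}-\vect b\|_{\ell_2}+\|\vect\Gc_0\bfgamma_{0\wf}-\vect b\|_{\ell_2}\le 2\delta$, which after tightening the constants (using the cone-concentration of $\vect h$ rather than the crude $2\delta$) produces the bound $\|\vect h\|_{\ell_2}\le\delta/\sqrt{1-(M-1)\epsilon}$ stated in \eqref{eq:SMVstability}.

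The main obstacle, and the reason for the somewhat unusual hypothesis on $\delta$, is proving the support containment $\mathrm{supp}(\bfgamma_{\star\wf})\subset T$. For this I would use the Exact Recovery Condition of Tropp: under the coherence bound with $\epsilon M<1/2$, the quantity $1-(2M-1)\epsilon=1-2M\epsilon+\epsilon$ is positive, and $T$ satisfies ERC with margin $(1-(2M-1)\epsilon)/(1-(M-1)\epsilon)$. The strategy is to consider the auxiliary restricted problem
\begin{equation*}
\tilde\bfgamma=\argmin\{\|\bfgamma\|_{\ell_1}\,:\,\mathrm{supp}(\bfgamma)\subset T,\ \|\vect\Gc_{0,T}\bfgamma-\vect b\|_{\ell_2}\le\delta\},
\end{equation*}
and then build a dual certificate $\vect w$ with $\vect\Gc_{0,T}^\ast\vect w=\mathrm{sgn}(\tilde\bfgamma)$ and $\|\vect\Gc_{0,T^c}^\ast\vect w\|_\infty<1$, which by standard duality forces any global minimizer of \eqref{eq:l1singleillumnoise} to be supported in $T$. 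The explicit form of the $\delta$-threshold arises precisely when estimating how large the dual residual $\|\vect\Gc_0(\tilde\bfgamma-\bfgamma_{0\wf})\|_{\ell_2}\le\|\vect e\|_{\ell_2}+\|\vect\Gc_0\tilde\bfgamma-\vect b\|_{\ell_2}$ can be while keeping the certificate within feasibility; the factor $\sqrt{1+M(1-(M-1)\epsilon)/(1-2M\epsilon+\epsilon)^2}$ is exactly what the ERC margin produces in this bookkeeping.

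Finally, the lower-bound assertion \eqref{eq:SMVbound} follows immediately from the previous two pieces: if $j\in T$ but $j\notin\mathrm{supp}(\bfgamma_{\star\wf})$, then $|h_j|=|(\bfgamma_{0\wf})_j|$, and by the stability bound $|h_j|\le\|\vect h\|_{\ell_2}\le\delta/\sqrt{1-(M-1)\epsilon}$; contrapositively, any component with $|(\bfgamma_{0\wf})_j|>\delta/\sqrt{1-(M-1)\epsilon}$ must belong to $\mathrm{supp}(\bfgamma_{\star\wf})$. The hard step, as noted, is the dual-certificate construction that pins down the support; everything else is bookkeeping with the coherence inequality.
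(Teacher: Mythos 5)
First, note that the paper does not actually prove this theorem: it defers entirely to Theorem~4.3 of \cite{CMP14}, and the appendix only proves the results of \S\ref{sec:random medium}. So your attempt can only be compared against the standard coherence-based argument that \cite{CMP14} follows, and at the level of architecture your outline is the right one: feasibility of $\bfgamma_{0\wf}$, support identification via a dual certificate under the exact recovery condition (whose coherence margin $\frac{1-(2M-1)\epsilon}{1-(M-1)\epsilon}$ is indeed the source of the factor $1-2M\epsilon+\epsilon$ in the hypothesis on $\delta$), an $\ell_2$ bound from the Gershgorin estimate on $\vect\Gc_{0,T}^\ast\vect\Gc_{0,T}$, and the observation that \eqref{eq:SMVbound} is an immediate corollary of the other two claims. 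Your last paragraph is correct and complete as written.

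The gaps are in the quantitative middle. (i) The inequality $\|\vect h\|_{\ell_2}^2\le(1-(M-1)\epsilon)^{-1}\|\vect\Gc_0\vect h\|_{\ell_2}^2$ does not follow from the cone condition $\|\vect h_{T^c}\|_{\ell_1}\le\|\vect h_T\|_{\ell_1}$ together with the Gram bound on $T$: the cone condition controls $\vect h_{T^c}$ only in $\ell_1$, and the cross-correlations between columns on $T$ and on $T^c$ degrade the constant. That inequality is clean only once $\vect h$ is known to be supported on $T$, i.e., only after the support containment is proved; the two halves of your argument therefore need to be reordered, with the dual-certificate step carried out first. (ii) Even granting support containment, the triangle inequality gives $\|\vect\Gc_0\vect h\|_{\ell_2}\le\delta+\|\vect e\|_{\ell_2}$, hence an error bound with constant $(\delta+\|\vect e\|_{\ell_2})/\sqrt{1-(M-1)\epsilon}$ rather than $\delta/\sqrt{1-(M-1)\epsilon}$. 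Removing that extra $\|\vect e\|_{\ell_2}$ is exactly where the hypothesis $\delta\ge\|\vect e\|_{\ell_2}\sqrt{1+M(1-(M-1)\epsilon)/(1-2M\epsilon+\epsilon)^2}$ has to be spent, and you dismiss this as ``tightening the constants'' and ``bookkeeping'' without performing it; likewise the dual certificate, which you yourself identify as the hard step, is only asserted to exist rather than constructed. As written, the proposal establishes at best a weaker estimate and does not yet yield \eqref{eq:SMVstability} with the stated constant.
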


\subsubsection{Reflectivity estimation}
Optimization \eqref{eq:l1singleillum} (or \eqref{eq:l1singleillumnoise}) gives the effective
source vector $\bfgamma_{\wf}$. In a second step, we compute the true reflectivities from the solution of this problem at once. 
Let $\Lambda_\star$ be the support of the recovered solution such that $|\Lambda_\star|=M'\leq M$,
and $\bfgamma_{\wf,M'}$ be the solution vector on $\Lambda_\star$. From
\eqref{eq:source vector}, we have
\begin{equation*}
    \bfgamma_{\wf,M'}=\diag(\mathbf{Z}^{-1}(\bfrho_{M'})\vect\Gc_0^T\vect\wf)\bfrho_{M'}=\diag(\wg_{\wf}(\vect y_{n_1}),\ldots,\wg_{\wf}(\vect y_{n_{M'}}))\bfrho_{M'}\, ,
\end{equation*}
where  $\wg_{\wf}(\vect y_{n_j})=\vect\wg^T_{0FL}(\vect y_{n_j})\vect\wf$.
Note that the scalars $\wg_{\wf}(\vect y_{n_j})$ are the exciting fields at the scatterers'
positions, 
and that the effective sources $\gamma_{n_j}$ are the true reflectivities $\rho_{n_j}$ multiplied by the exciting fields (see Fig. \ref{fig:schematic} (b)). Hence, using \eqref{eq:exciting-field}
we can compute $\wg_{\wf}(\vect y_{n_j})$ explicitly as follows
\begin{equation}\label{eq:invertless}
    \wg_{\wf}(\vect y_{n_j})=\vect\wg_0^T(\vect y_{n_j})\vect\wf+\sum_{k=1,k\neq j}^{M'}\gamma_{k}\wG_0(\vect y_{n_j},\vect y_{n_k}),\quad j=1,\ldots,{M'}.
\end{equation}
Then, the true reflectivities of the scatterers are recovered by
\begin{equation}\label{eq:truereflectivities}
    \rho_{n_j}=\gamma_{n_j}/\wg_{\wf}(\vect y_{n_j}),\quad j=1,\ldots,{M'}.
\end{equation}
When the data contains additive noise, we choose the support $\Lambda_\star$ 
of the solution recovered by \eqref{eq:l1singleillumnoise} such that all the components of $\bfgamma_{\wf,M'}$
satisfy \eqref{eq:SMVbound}.

\subsection{Imaging using multiple arbitrary illuminations}\label{subsec:multiple arbitrary illuminations}
Imaging with a single illumination can be very sensitive to additive noise, especially when the noise level
is high. Moreover, the {\em screening effect} can cause the failure of recovering some of the scatterers.
Note that, for a fixed imaging configuration, the {\em screening effect} depends on the illumination
vector $\vect\wf$ and the amount of noise in the data. When the effective source at $\vect y_j$ is
below the noise level because $\wg_{\wf}(\vect y_j)$ is small, the corresponding scatterer cannot be detected.
This motives us to consider active array imaging with multiple illuminations. In this case, active
array imaging is modeled as a joint sparsity recovery problem, in which we seek for a matrix solution $\vect{\Rho}$ whose
columns share the same support. By increasing the diversity of illuminations, we are able to minimize the
{\em screening effect} and have higher chance of locating all the scatterers more stably.

\subsubsection{Support recovery}
Instead of simply stacking multiple data vectors $\vect b^j$ obtained from different illuminations $\vect\wf^j$,
$j=1,\ldots,\upsilon$, and solving the corresponding augmented linear system as in \S\ref{subsec:single illumination}, we formulate the problem using 
the {\it MMV} approach where the unknown
vectors $\bfgamma^j$ corresponding to each illumination $\vect\wf^j$ are arranged into a matrix.
More precisely, let $\mathbf{B}=[\vect b^1,\ldots,\vect b^\upsilon]$ be the matrix whose columns
are the data vectors generated by all the illuminations, and $\vect{\Rho}=[\bfgamma^1,\ldots,\bfgamma^\upsilon]$
be the unknown matrix whose $j^\mathrm{th}$ column corresponds to the {\em effective source vector}
$\bfgamma^j$ under illumination $\vect\wf^j$. Thus, we formulate the problem of active array imaging with multiple
measurements as solving the matrix-matrix equation
\begin{equation}\label{eq:linearsystemmmv}
    \vect\Gc_0\vect\Rho=\mathbf{B}\,
\end{equation}
for $\vect\Rho$. The sparsity of the matrix variable $\vect\Rho$ is characterized by the number of nonzero
rows. Thus, we define the row-support of a given matrix $\vect\Rho$ by
\begin{equation*}
    \rowsupp(\vect\Rho)=\{i:\,\,\exists\, j\,\,\text{s.t.}\,\, \Rho_{ij}\neq0\}.
\end{equation*}
When the matrix $\vect\Rho$ degenerates
to a column vector, the row-support reduces to the support of that vector.
Similar to the $\ell_1$ norm relaxation used in the SMV problem in \S\ref{subsec:single illumination},
the sparsest solution  using multiple illuminations is given by the solution to the convex problem
\begin{equation}\label{eq:MMV.convex}
    \min\Xi_1(\vect\Rho)\qquad\text{s.t.}\qquad\vect\Gc_0\vect\Rho=\mathbf{B},
\end{equation}
where $\Xi_1$ is a convenient convex relaxation of the size of $\rowsupp(\vect\Rho)$.
As in \cite{CMP14}, we take 
\begin{equation} \label{eq:Jp1}
\Xi_1(\vect\Rho)=\sum_{i=1}^N\|\Rho_{i\cdot}\|_{\ell_2},
\end{equation}
where $\Rho_{i\cdot}$ is the $i^\mathrm{th}$ row of the matrix.
When the data vectors are contaminated with additive noise vectors $\vect{e}^j$,
$j=1,\ldots,\upsilon$, we have the matrix-matrix equation
\begin{equation}\label{eq:linearsystemmmvnoise}
    \vect\Gc_0\vect\Rho = \mathbf{B}+\vect\Ec,
\end{equation}
where $\vect\Ec=[\vect{e}^1\cdots\vect{e}^\upsilon]$, and we seek a solution to
\begin{equation}\label{eq:MMV21noise}
    \min\Xi_1(\vect\Rho)\qquad\text{s.t.}\qquad\|\vect\Gc_0\vect\Rho-\mathbf{B}\|_F<\delta,
\end{equation}
for a pre-specified constant $\delta$, where the Frobenius norm is given by
\[\|\vect\Rho\|_F=\left(\sum_{i=1}^N\sum_{j=1}^K|\Rho_{ij}|^2\right)^{\frac{1}{2}}.\]

\begin{remark}\label{rem:MMV}
    Similar to Theorems~\ref{thm0} and \ref{theorem:smv} for array imaging with a single illumination,
    we have results regarding the exact recovery and stability of the solution to \eqref{eq:MMV.convex}
    and \eqref{eq:MMV21noise} for imaging using multiple illuminations. These results are proved in \cite{CMP14}.
    We note, however, that these results do not provide a quantitative improvement when the number of measurements
    $\upsilon$ increases. Intuitively, this lack of improvement is justifiable since the measurements obtained from
    randomly chosen illuminations could be rather ineffective. There is no guarantee that randomly picked illuminations
    bring more information useful for imaging. In practive, however, we do observe a general improvement in the images
    formed with multiple random illuminations. In \S\ref{subsec:optimal illuminations}, we use selective illuminations,
    obtained from the SVD of the array response matrix $\vect\wP$ to increase the efficiency of the MMV approach.
\end{remark}

\subsubsection{Reflectivity estimation} 

Once we obtain the matrix $\vect\Rho_\star$ from \eqref{eq:MMV.convex} (or \eqref{eq:MMV21noise}),
whose columns are the effective sources corresponding to the different illuminations, we compute
in a second step the true reflectivities as follows. For each component $i$ in the
support such that
the stability condition in Theorem~$4.3$ in \cite{CMP14} is satisfied, we compute the reflectivities $\rho^j_{i}$
corresponding to each illumination $j$ by applying \eqref{eq:invertless} and
\eqref{eq:truereflectivities}. Then, we take the average $\frac{1}{\upsilon}\sum_{j=1}^\upsilon\rho_{i}^j$
as the estimated reflectivity.

\subsection{Imaging using optimal illuminations}\label{subsec:optimal illuminations}

To further improve the efficiency, and especially the robustness of \eqref{eq:MMV21noise}, we propose 
to use selective illuminations as multiple illumination vectors  \cite{CMP14}.
The optimal set of illumination vectors can be obtained systematically from the SVD of the full response matrix $\vect\wP$. If the full array response matrix is not available, they can be obtained from an iterative time
reversal process as discussed in \cite{CMP13}.
Let the SVD of $\vect\wP$ be
\begin{equation*}
    \vect\wP=\vect\wU\vect\Sigma\vect\wV^\ast=\sum_{j=1}^{\tilde M}\sigma_j\wU_{\cdot j}\wV_{\cdot j}^\ast,
\end{equation*}
where $\wU_{\cdot j}$ and $\wV_{\cdot j}$ are the left and right singular vectors, respectively,
and the nonzero singular values $\sigma_j$ are given in descending order as
$\sigma_1\ge\sigma_2\ge\cdots\ge\sigma_{\tilde M}>0$, with $\tilde M\ge M$.
When there is no  noise in the data, $\tilde M=M$. Now, let the illumination vectors
be the top right singular vectors, i.e. $\vect\wf^j=\wV_{\cdot j}$, $j=1,\ldots,\upsilon\le\tilde M$. Then, we have
\begin{equation}\label{eq:linearsystemmmv optimal}
    \vect\Gc_0\vect\Rho_V=\mathbf{B}_{opt}=\vect\wP\vect\wV_{\cdot,1:\upsilon}=[\sigma_1\wU_{\cdot 1}\cdots\sigma_\upsilon\wU_{\cdot\upsilon}]+\vect{\widetilde{\Ec}}\, ,
\end{equation}
where the data matrix $\mathbf{B}_{opt}$ contains all the essential information for imaging the scatterers.
The improvement of the efficiency, when using optimal illuminations, comes from the fact that
illuminations using singular vectors deliver most of the energy around the scatterers,
even when multiple scattering is non-negligible. Therefore, taking a few top singular vectors
is enough to focus around the scatterers that contribute to the data received on the imaging array.

\subsection{Numerical experiments} \label{sec:numerical simulation}
We now present numerical simulations in two dimensions. 
In all the simulations shown below with a single illumination, we use the iterative shrinkage-thresholding algorithm GelMa
proposed in \cite{Moscoso12} due to its flexibility with respect to the choice of the regularization parameter
used in the algorithm. In the simulations with multiple illuminations we use an extension of this algorithm described 
in \cite{CMP14}.

Figure~\ref{original_mm} shows five scatterers placed within an IW  of size 
$41\lambda\times41\lambda$, which is at a distance $L=100\lambda$ from a linear array with $100$ transducers that are one wavelength $\lambda$ apart
(the spatial units of all the images is $\lambda$).
The amplitudes of the reflectivities of the scatterers $|\alpha_j|$ are $2.96$, $2.76$, $2.05$, $1.54$ and $1.35$.  
Their phases are set randomly in each realization. Note that for a  given illumination $\vect\wf$ and a scatterer configuration $\vect\rho_0$ with fixed amplitudes,
the amount of multiple scattering 
depends on the realization of the phases in $\vect\rho_0$. For the amplitudes of the reflectivities chosen here, the amount of multiple scattering over single 
scattering ranges  between $50\%$  and $100\%$. 

\begin{figure}[htbp]
\begin{center}
\includegraphics[scale=0.25]{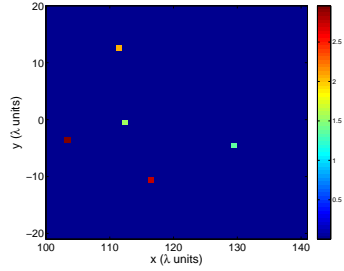}
\caption{Original configuration of the scatterers in a $41\times41$ image window with grid points separated by $1$.}
\label{original_mm}
\end{center}
\end{figure}

Figure~\ref{SVM} shows the images obtained by $\ell_1$ norm minimization with $0\%$ (left), $10\%$ (middle), and $20\%$ noise (right)
when a single illumination probes the medium. When there is no noise in the data, $\ell_1$ norm minimization recovers the positions and
reflectivities of the scatterers exactly (the true locations are indicated with small white dots).
However, when the data is corrupted by $10\%$ and $20\%$ of additive noise (middle and right images of Figure~\ref{SVM}),
$\ell_1$ norm minimization fails. Some ghosts appear in the images and some scatterers are missing because of {\it screening effect}.

\begin{figure}
\centering
\begin{tabular}{ccc}
\includegraphics[scale=0.25]{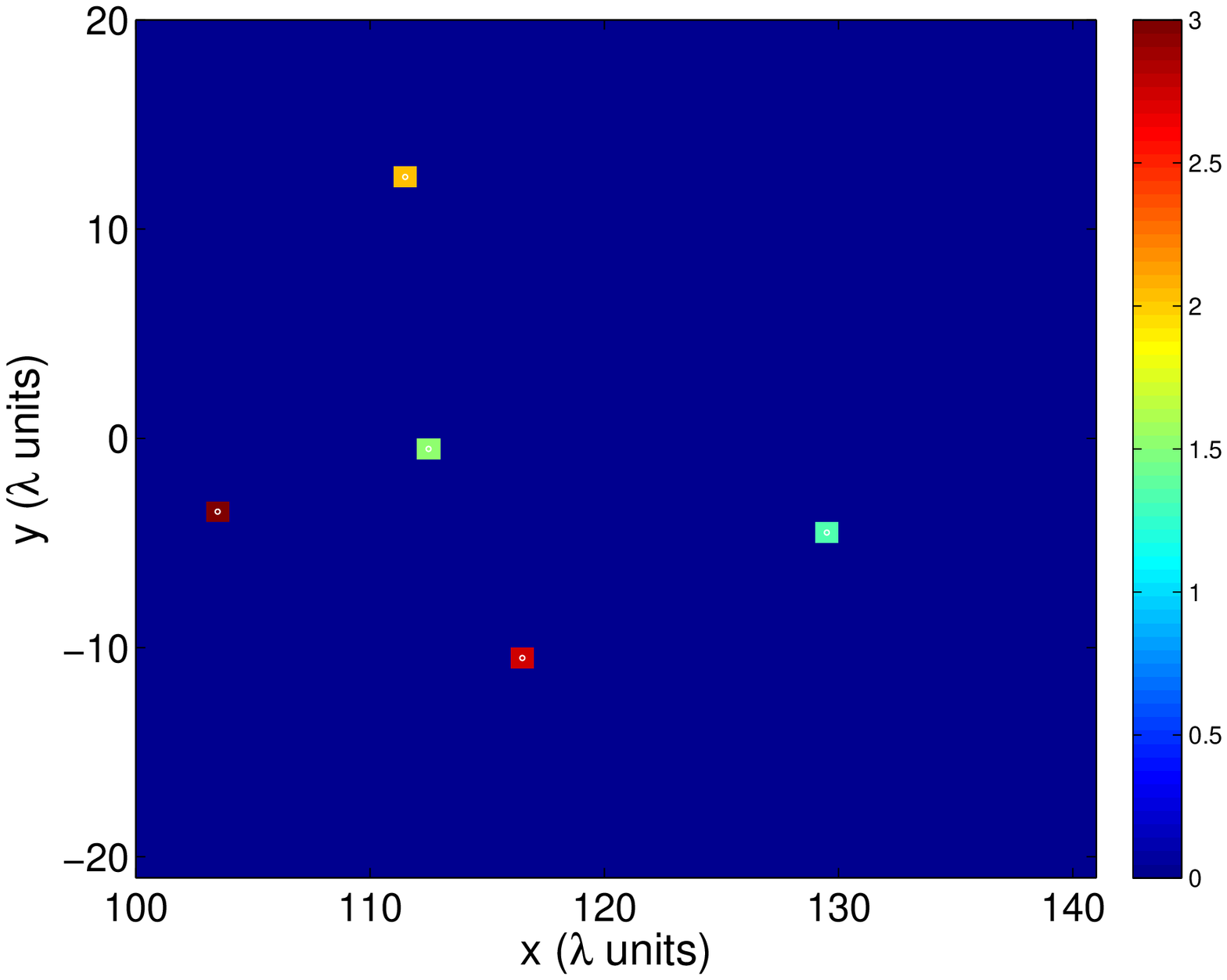} & 
\includegraphics[scale=0.25]{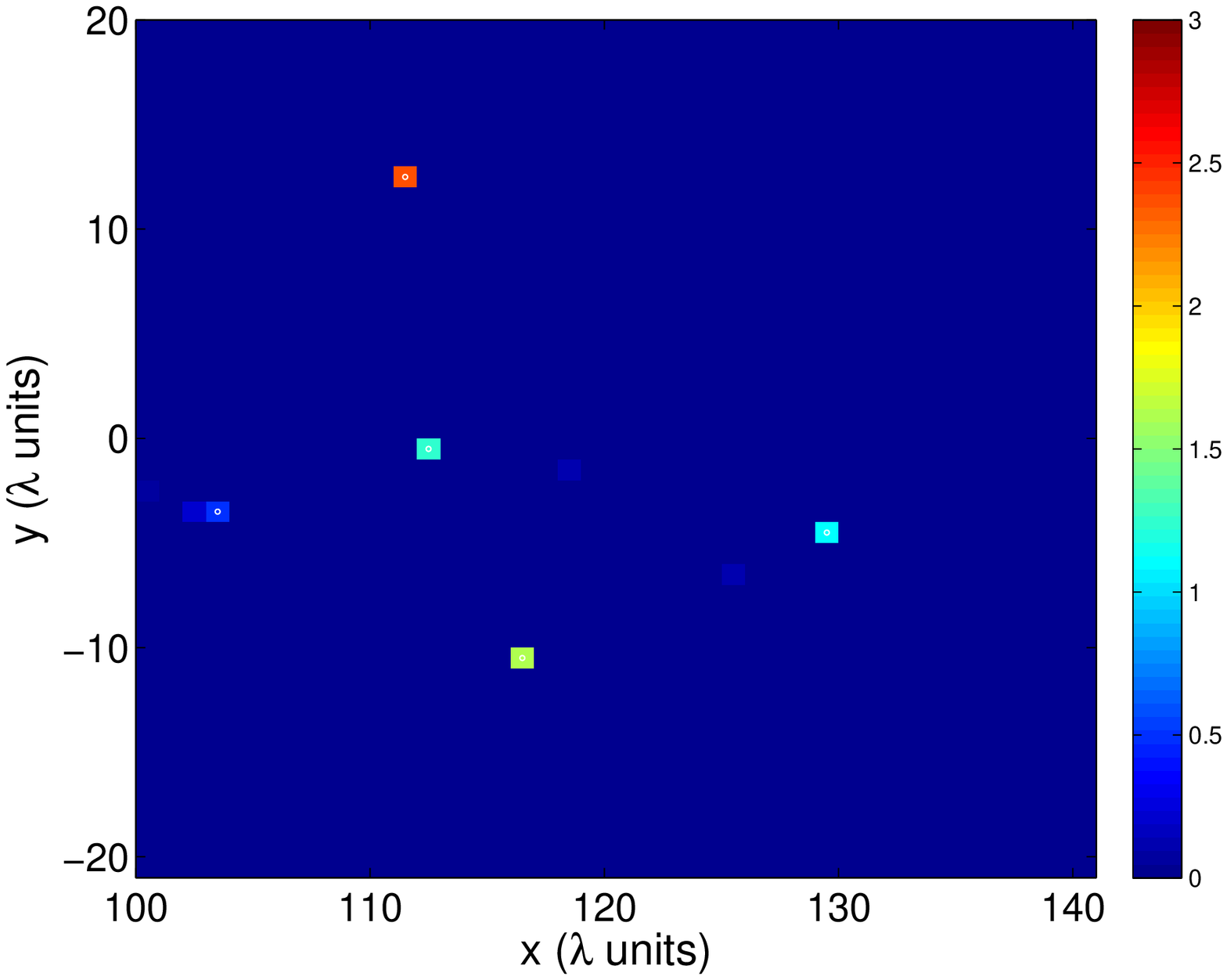} & 
\includegraphics[scale=0.25]{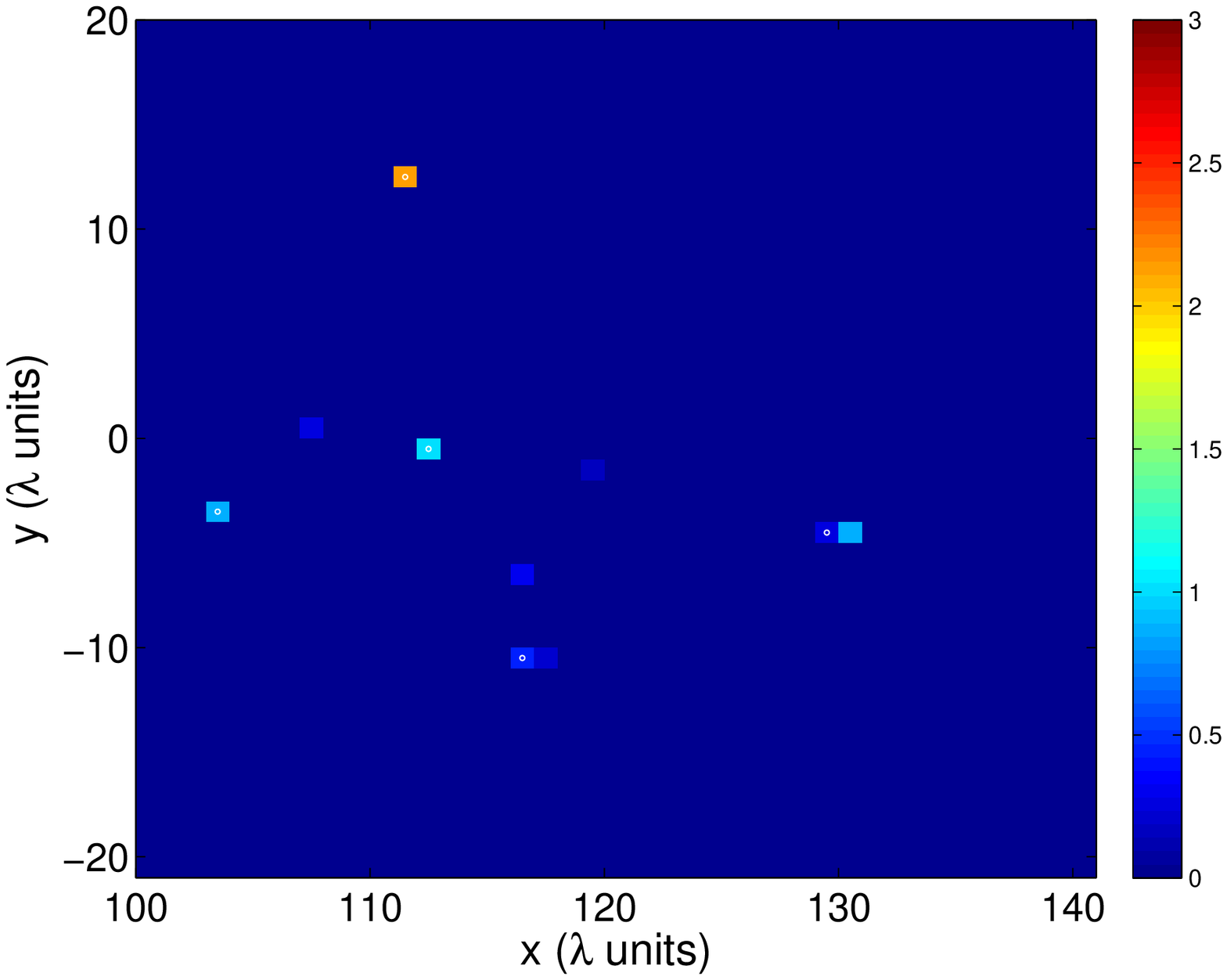}
\end{tabular}
\caption{Images reconstructed by solving \eqref{eq:l1singleillum} and \eqref{eq:l1singleillumnoise} when single illumination is used.
From left to right, there is $0\%$, $10\%$, and $20\%$ additive noise in the data.}
\label{SVM}
\end{figure}

When multiple illuminations are used we expect a general improvement in the images. 
Figure~\ref{MMV} shows the results of the MMV algorithm when $5$ random illuminations are used.
By multiple random illuminations we mean a set of illuminations coming, each one,
from only one of the transducers on the array at a time, i.e., $\wf_{p}=1$ and  $\wf_q=0$ for $q\neq p$,
with $p$ chosen randomly at a time. The set of illuminations used in each image of Figure~\ref{MMV} is different.
The data contain $10\%$ (left), $20\%$ (middle) and $50\%$ (right) of additive noise.
We observe that the images obtained with multiple illuminations are more stable with respect to additive noise. 
However, we still see some missing scatterers in the right image ($50\%$ of noise), and therefore,
for this particular set of randomly chosen illuminations, imaging is still affected by {\it screening effect}.

\begin{figure}
\centering
\begin{tabular}{ccc}
\includegraphics[scale=0.25]{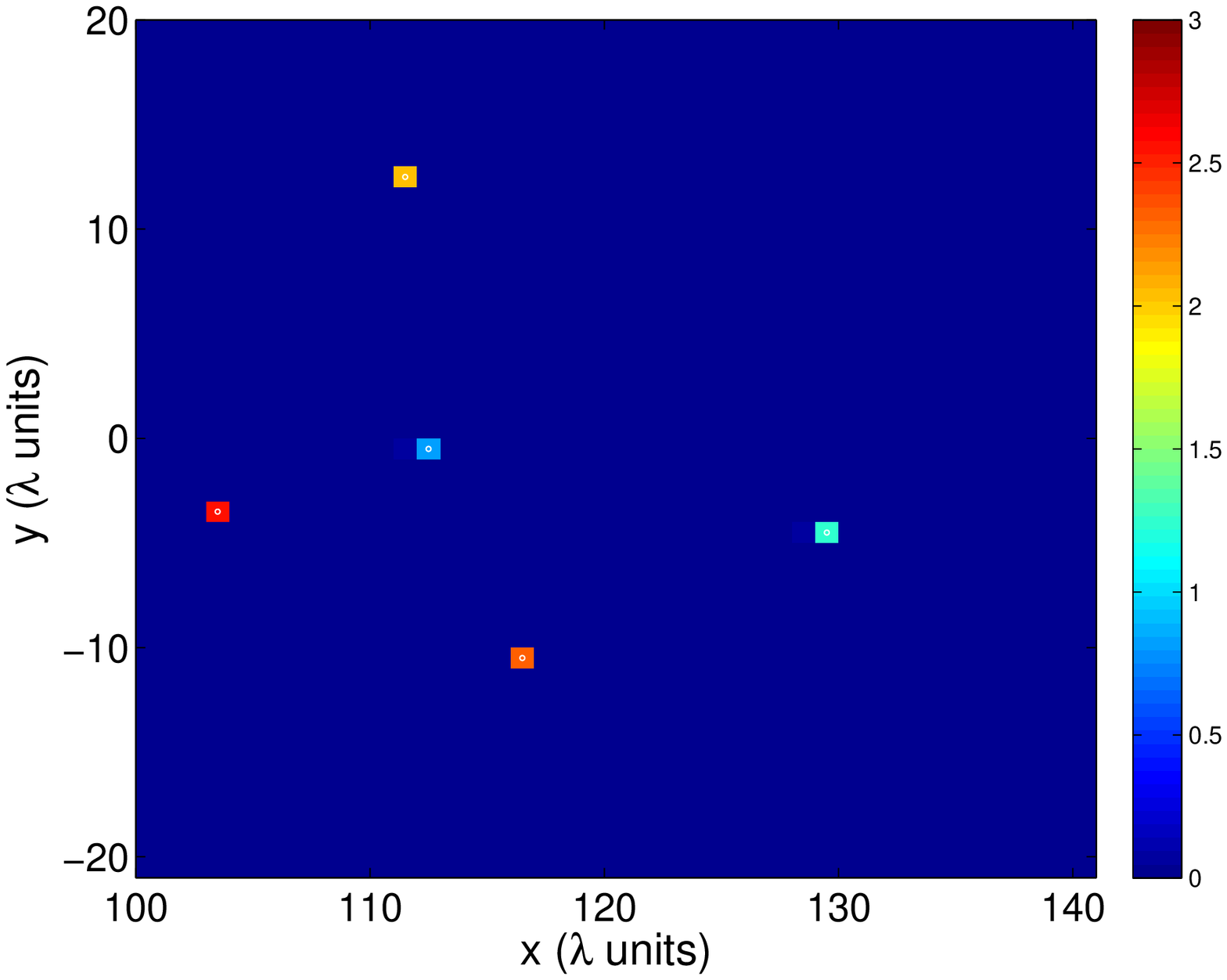} & 
\includegraphics[scale=0.25]{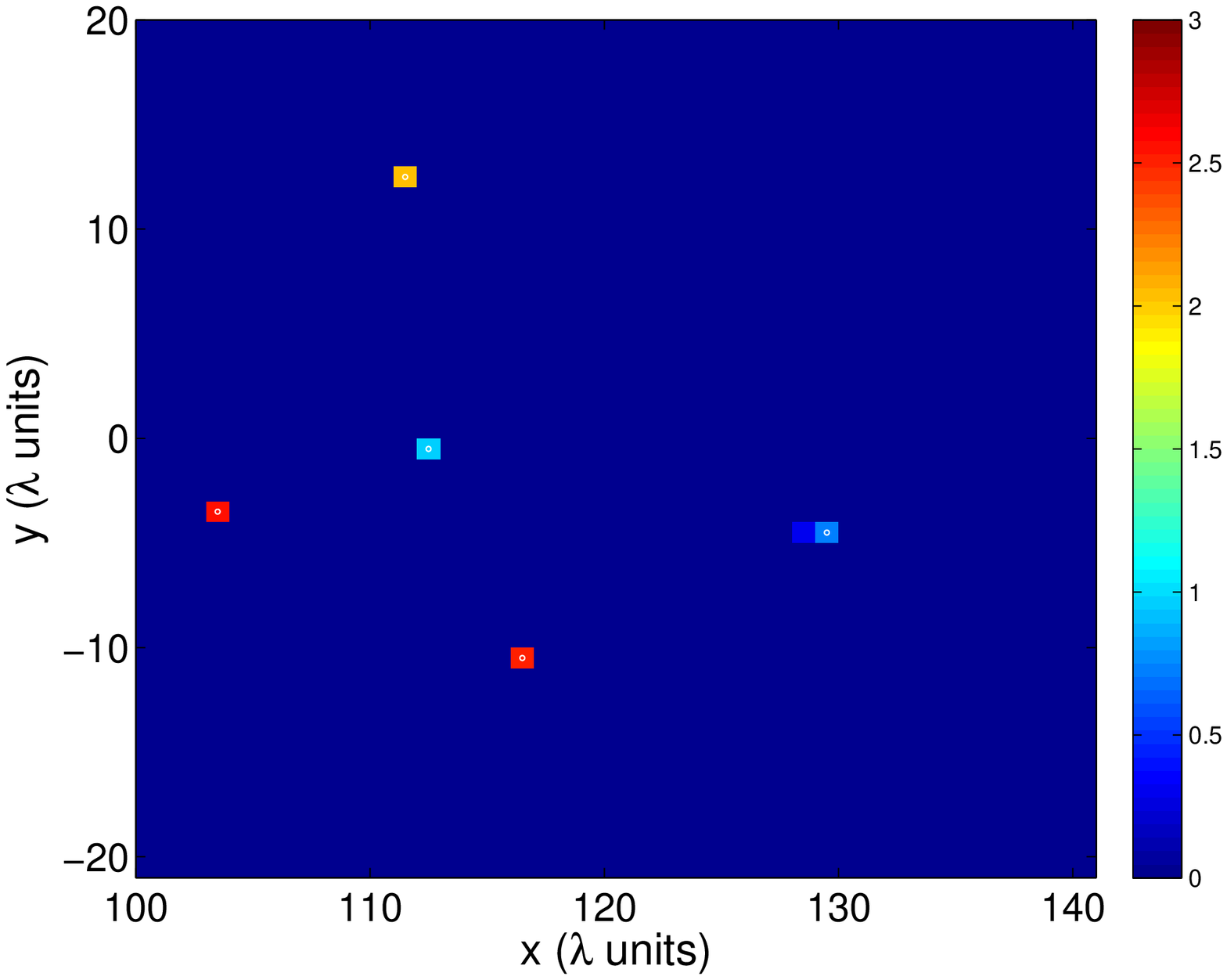} & 
\includegraphics[scale=0.25]{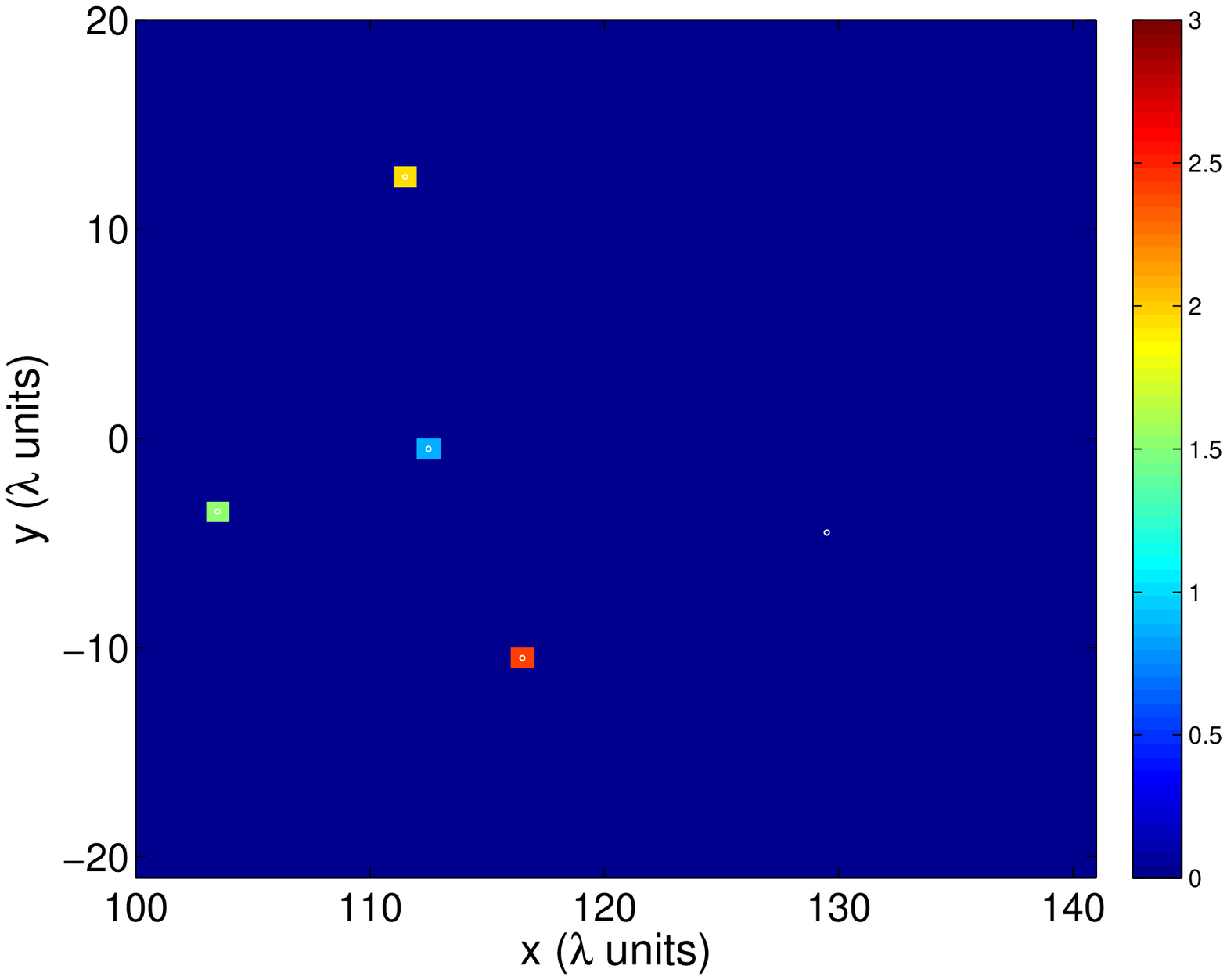}
\end{tabular}
\caption{Images reconstructed by solving \eqref{eq:MMV21noise} when $5$ random
illuminations are used. From left to right, there is  $10\%$, $20\%$, and $50\%$ noise in the data.
}
\label{MMV}
\end{figure}

Figure~\ref{MMV} shows the necessity of using ``good" illuminations that maximize the information content of the
data,  especially when the signal-to-noise ratio (SNR) is low. Figure~\ref{MMVoptimal} displays the images 
obtained with  $1$ (left), $2$ (middle), and $3$ (right), optimal
illuminations associated to the singular vectors $\vect\wf^{j}=\wV_{\cdot j}$, with $j=1,2,3$.
It is remarkable that only a few of them ($2$ or $3$) are enough to obtain very good images, even with $50\%$ of additive noise in the
data.

\begin{figure}
\centering
\begin{tabular}{ccc}
\includegraphics[scale=0.25]{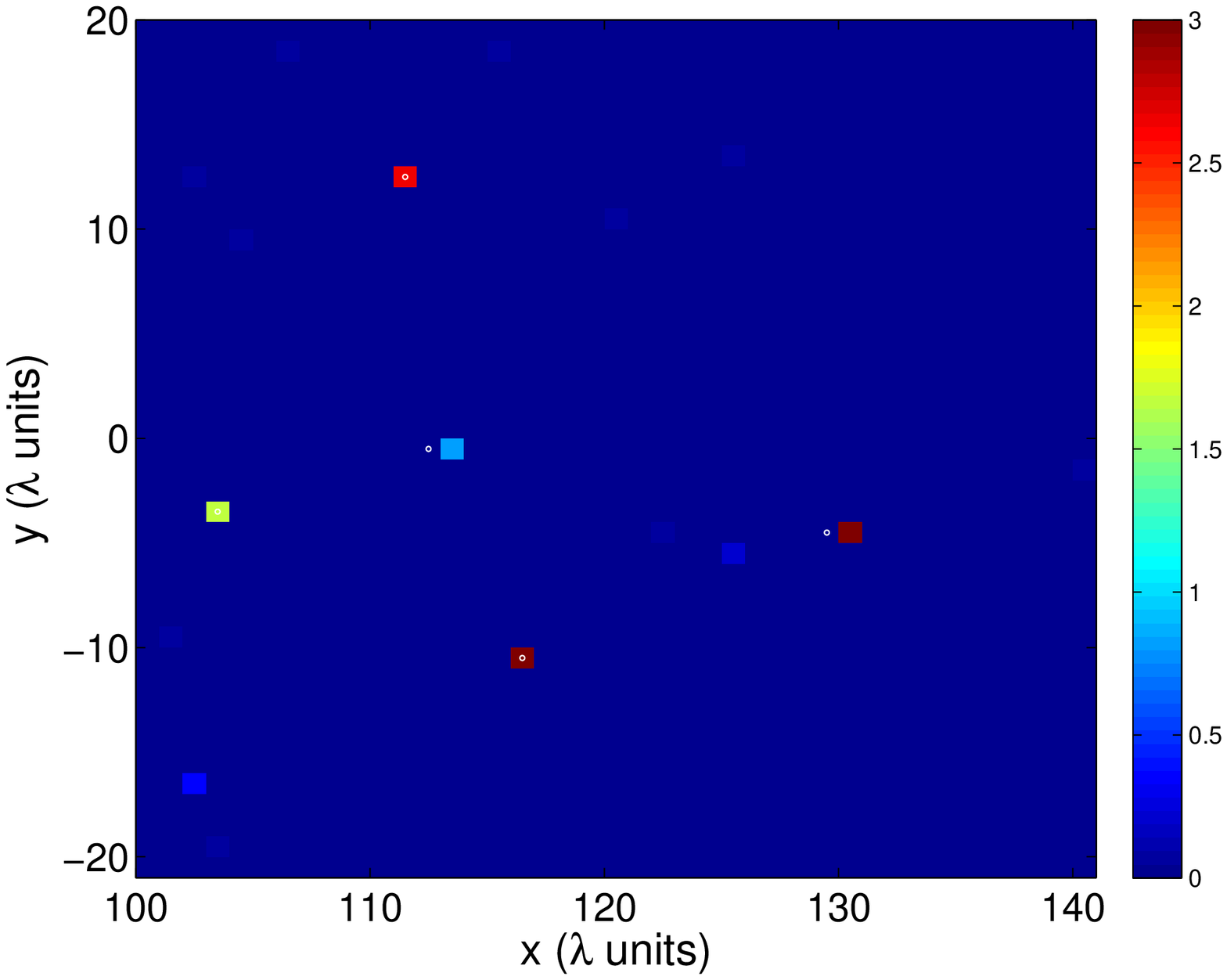} & 
\includegraphics[scale=0.25]{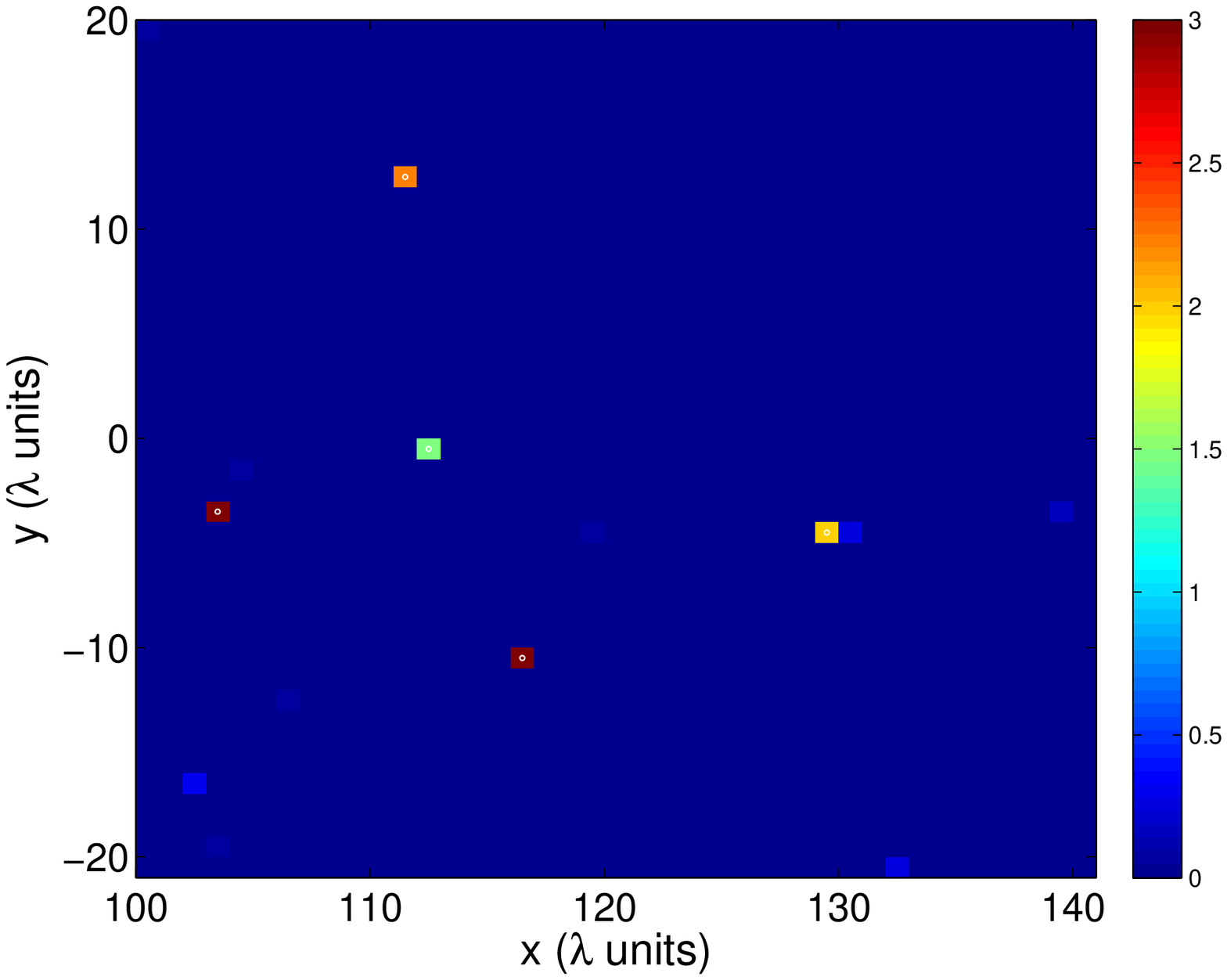} & 
\includegraphics[scale=0.25]{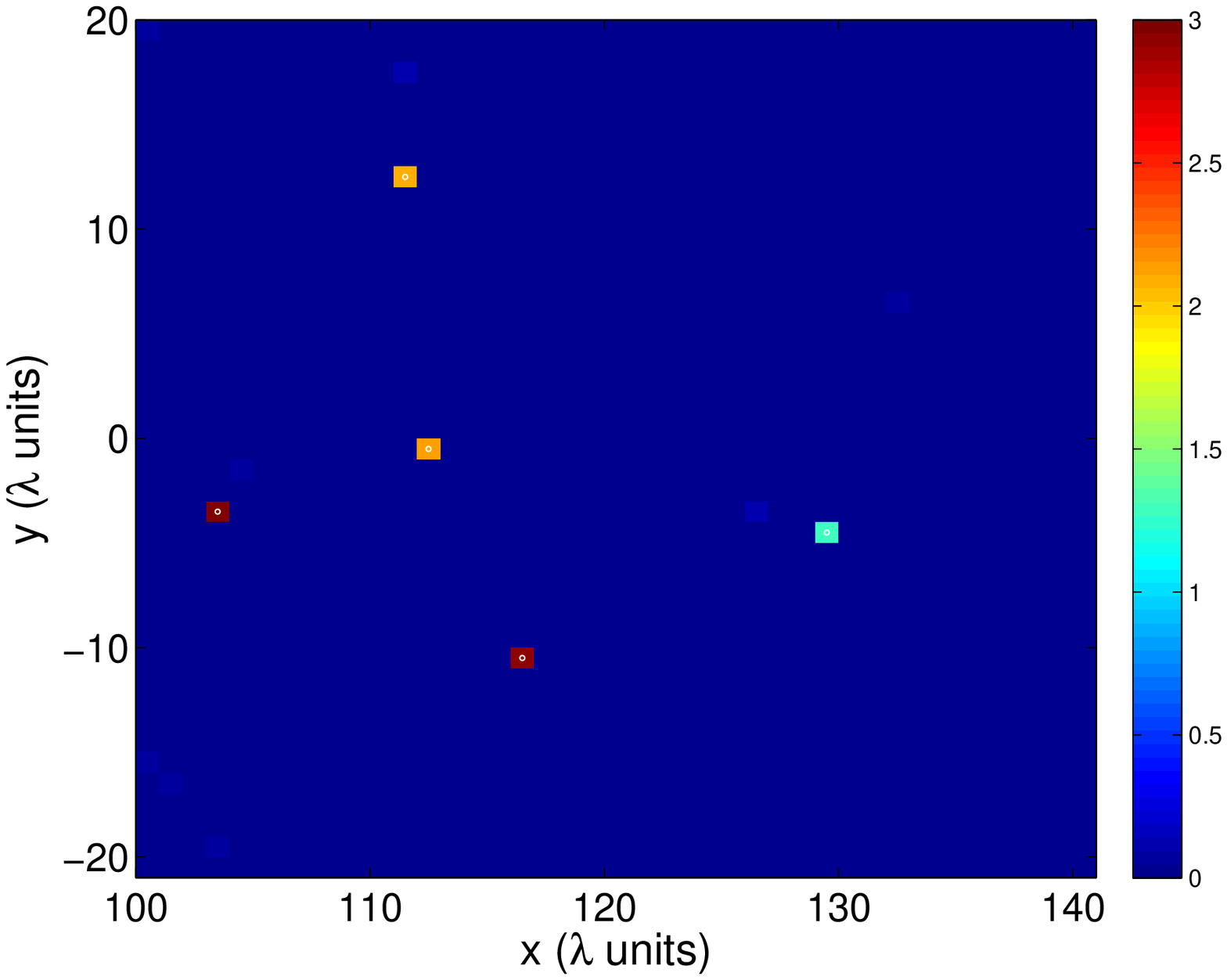}
\end{tabular}
\caption{Images reconstructed by solving \eqref{eq:MMV21noise} when optimal illuminations are used.
From left to right, the images are reconstructed by using $1$, $2$, and $3$ top singular vectors as illumination vectors.
There is $50\%$ noise in the data.
}
\label{MMVoptimal}
\end{figure}

\section{Single scattering as a special case: The hybrid-$\ell_1$ method}\label{sec:single scattering}
In situations where the scatterers are weak and/or are very far apart, 
the incident wave undergoes only one scattering event before coming back to the array.
In these cases, the inverse of the Foldy-Lax matrix $\mathbf{Z}^{-1}(\bfrho)$
in \eqref{eq:responsematrix3} becomes the identity matrix, and the array response
matrix under the so called Born approximation becomes
$$
\widehat{\bfPi}=\vect\Gc_0\diag(\bfrho_0)\vect\Gc_0^T\, .
$$
Then, the data received on the array when an illumination vector $\vect\wf$ probes the medium
is given by $\vect b=\widehat{\bfPi}\vect\wf$. Similar to \eqref{eq:single}, we can model the
data $\vect b$ as applying the operator $\vect\Ac_{\wf}$ to the unknown reflectivity vector
$\bfrho_0$. Because multiple scattering is negligible, $\wg_{\wf}(\vect y_j)=\vect\wg^T_0(\vect y_j)\vect\wf$ in \eqref{eq:Af},
and thus, \eqref{eq:single} is now linear in $\bfrho_0$, which we denote as $\vect\Ac_{0\wf}$.
Therefore, array imaging of localized scatterers in homogeneous media when multiple scattering is negligible amounts to solving 
the linear optimization problem 
\begin{equation}\label{eq:smv born}
    \min\|\bfrho\|_{\ell_1}\qquad\text{s.t.}\qquad\vect\Ac_{0\wf}\bfrho=\vect b.
\end{equation}
In \eqref{eq:smv born} we assume that the data is noiseless. If the data contains noise, we solve
\begin{equation}\label{eq:smv born noise}
        \min\|\bfrho\|_{\ell_1}\qquad\text{s.t.}\qquad\|\vect\Ac_{0\wf}\bfrho-\vect b\|_{\ell_2}<\delta\, .
\end{equation}

Because of the linearity of the equations, a single-step process is sufficient to recover both
the locations and reflectivities of the scatterers from \eqref{eq:smv born} or \eqref{eq:smv born noise}. Moreover, when
multiple scattering is negligible, there is no {\em screening effect}, and hence, 
solving the set of equations generated from a single illumination alone can produce reliable images.
Note that Theorems~\ref{thm0} and \ref{theorem:smv} can be applied to problems \eqref{eq:smv born} and 
\eqref{eq:smv born noise} as well.

Similarly to the case where multiple scattering was important, we can use multiple illuminations and the MMV approach to find 
the reflectivity vector $\bfrho_0$. Furthermore, imaging using optimal illuminations
has a simpler form and weaker requirements on the imaging configuration when multiple scattering is negligible. 
Indeed, assume that the data is noiseless, and
let the SVD of array response matrix be $\widehat{\bfPi}=\vect\wU\vect\Sigma\vect\wV^\ast$,
with $M$ singular values $\{\sigma_j\}_{j=1}^M$ greater than $0$. Following the MMV formulation in
\S\ref{subsec:multiple arbitrary illuminations},
we have $\vect b^j=\widehat{\bfPi}\wV_{\cdot j}=\sigma_j\wU_{\cdot j}$ for $j=1,\ldots,M$. On the other hand,
$\vect b^j=\Ac_{0\wV_{\cdot j}}\bfrho_0=\vect\Gc_0\bfgamma_0^j$, where $\bfgamma_0^j=\diag(\wg_{\wV_{\cdot j}}(\vect y_k))\bfrho_0$
is linear in $\bfrho_0$, which is the major difference from \eqref{eq:source vector}. 
Now consider \eqref{eq:linearsystemmmv optimal} with $\vect{\widetilde{\Ec}}=\vect{0}$ for symplicity, and 
project $\vect B_{opt}$ onto the space spanned by the top left singular vectors corresponding to the significant 
singular values. Denoting $\vect{\mathcal{U}}=[\wU_{\cdot 1},\ldots,\wU_{\cdot M}]$ and $\vect\Rho_V=[\bfgamma^1,\ldots,\bfgamma^M]$, we get
\begin{eqnarray}\label{eq:hybrid1}
    \vect{\mathcal{U}}^\ast\vect\Gc_0\vect\Rho_V&=&\left[
    \begin{array}{cccc}
        \wU_{\cdot 1}^\ast\vect\Gc_0\bfgamma^1 & \wU_{\cdot 1}^\ast\vect\Gc_0\bfgamma^2 & \cdots & \wU_{\cdot 1}^\ast\vect\Gc_0\bfgamma^M\\
        \wU_{\cdot 2}^\ast\vect\Gc_0\bfgamma^1 & \wU_{\cdot 2}^\ast\vect\Gc_0\bfgamma^2 & \cdots & \wU_{\cdot 2}^\ast\vect\Gc_0\bfgamma^M\\
        \vdots & \vdots & \vdots & \vdots \\
        \wU_{\cdot M}^\ast\vect\Gc_0\bfgamma^1 & \wU_{\cdot M}^\ast\vect\Gc_0\bfgamma^2 & \cdots & \wU_{\cdot M}^\ast\vect\Gc_0\bfgamma^M
\end{array}\right]\nonumber\\
&=&\vect{\mathcal{U}}^\ast\vect B_{opt}=\diag(\sigma_1,\ldots,\sigma_M).
\end{eqnarray}
In \eqref{eq:hybrid1}, there are $M$ unknowns $\bfgamma^1,\ldots,\bfgamma^M$, and $M^2$ equations.
However, when multiple scattering is negligible we can associate
a nonzero singular value to each scatterer and the corresponding singular vector is given by
\begin{equation}\label{eq:svd}
    \wU_{\cdot j}=\overline{\wV_{\cdot j}}\propto\frac{\vect\wg_0(\vect y_j)}{\|\vect\wg_0(\vect y_j)\|_{\ell_2}},\qquad j=1,\ldots,M,
\end{equation}
up to an arbitrary complex constant of modulus $1$. Therefore, the rank-$1$ matrices $\wV_{\cdot j}\wU_{\cdot i}^\ast$ 
have a (non-trivial) eigenvalue different from zero only when $i=j$.
That is, only the diagonal blocks of the matrix $\vect{\mathcal{U}}^\ast\vect\Gc_0\vect\Rho_V$ contribute in \eqref{eq:hybrid1},
and hence, we can simplify \eqref{eq:hybrid1} further as a set of $M$ equations.
Using \eqref{eq:svd} and
\begin{equation*}
    \wU_{\cdot i}^\ast\vect\Gc_0\bfgamma^j=\sum_{k=1}^N\big(\wU_{\cdot i}^\ast\vect\wg_0(\vect y_k)\big)\big(\vect\wg_0(\vect y_k)^\ast\wV_{\cdot j}\big)\rho_{0k}
    =\sum_{k=1}^N\vect\wg_0^\ast(\vect y_k)\big(\wV_{\cdot j}\wU_{\cdot i}^\ast\big)\vect\wg_0(\vect y_k)\rho_{0k} \, ,
\end{equation*}
we can write \eqref{eq:hybrid1} as a set of $M$ equations linear in $\bfrho_0$ such that
\begin{equation}\label{eq:hybrid2}
    \vect\Bc\bfrho_0=[\sigma_1,\ldots,\sigma_M]^T\, ,
\end{equation}
where
\begin{equation*}
    \vect\Bc=\left[\begin{array}{ccc}
            \overline{\vect\wg_0^\ast(\vect y_1)\wU_{\cdot 1}}\vect\wg_0^T(\vect y_1)\wV_{\cdot 1} & \cdots & \overline{\vect\wg_0^\ast(\vect y_K)\wU_{\cdot 1}}\vect\wg_0^T(\vect y_K)\wV_{\cdot 1}\\
            \cdots & \ddots & \cdots \\
            \overline{\vect\wg_0^\ast(\vect y_1)\wU_{\cdot M}}\vect\wg_0^T(\vect y_1)\wV_{\cdot M} & \cdots & \overline{\vect\wg_0^\ast(\vect y_K)\wU_{\cdot M}}\vect\wg_0^T(\vect y_K)\wV_{\cdot M}\\
    \end{array}\right].
\end{equation*}
Therefore, the hybrid-$\ell_1$ method is to solve for $\bfrho_0$ from the SMV problem
\begin{equation}\label{eq:hybrid l1}
    \min\|\bfrho\|_{\ell_1}\qquad\text{s.t.}\qquad\vect\Bc\bfrho=[\sigma_1,\ldots,\sigma_M]^T \, .
\end{equation}
The advantage of hybrid-$\ell_1$ method compared to the MMV formulation is its simpler form
and lower dimensionality.
Moreover, the hybrid $\ell_1$ method guarantees exact recovery, the sufficient condition for which also implies that
it can form images with higher resolution (see \cite{CMP13} for details).
\begin{theorem}\label{theorem:hybrid l1}
    Assume that the scatterers are far part such that \eqref{eq:svd} is satisfied.
    Let $\vect E$ be the submatrix of $\vect\Bc$ formed by normalized columns corresponding to the scatterers' location and
    $\vect S$ be the submatrix formed by the remaining normalized columns of $\vect\Bc$.
    If $\|\vect S\|_{1\rightarrow1}<1-\|\vect E-\vect I\|_{1\rightarrow1}$,
    where $\|\cdot\|_{1\rightarrow1}$ is the matrix $1$-norm induced by the vector $\ell_1$ norm,
    and $\vect I$ is the identity matrix, then $\bfrho_0$ is the unique solution to \eqref{eq:hybrid l1}.
\end{theorem}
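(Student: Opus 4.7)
The plan is to prove Theorem~\ref{theorem:hybrid l1} in two stages: first verify that $\bfrho_0$ is feasible for the program \eqref{eq:hybrid l1}, and then establish uniqueness through the classical Null Space Property, which I would derive from the hypothesis via a Neumann-series perturbation of the identity.

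Feasibility essentially comes for free from the construction. The $i$-th component of $\vect\Bc\bfrho_0$ is, by the derivation leading to \eqref{eq:hybrid2}, exactly $\wU_{\cdot i}^\ast\vect\Gc_0\bfgamma^i$, i.e. the $(i,i)$-entry of the matrix product $\vect{\mathcal U}^\ast\vect B_{opt}$. Since $\vect B_{opt}=[\sigma_1\wU_{\cdot 1}\,\cdots\,\sigma_M\wU_{\cdot M}]$ and the columns of $\vect{\mathcal U}$ are orthonormal, this product equals $\diag(\sigma_1,\ldots,\sigma_M)$, whose $i$-th diagonal entry is $\sigma_i$. Hence $\vect\Bc\bfrho_0=[\sigma_1,\ldots,\sigma_M]^T$.

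For uniqueness, writing $T$ for the support of $\bfrho_0$, it suffices to show that for every nonzero $\vect h\in\ker(\vect\Bc)$ one has $\|\vect h_T\|_{\ell_1}<\|\vect h_{T^c}\|_{\ell_1}$; combined with the triangle inequality
\[
\|\bfrho_0+\vect h\|_{\ell_1}=\|\bfrho_{0,T}+\vect h_T\|_{\ell_1}+\|\vect h_{T^c}\|_{\ell_1}\ge\|\bfrho_0\|_{\ell_1}-\|\vect h_T\|_{\ell_1}+\|\vect h_{T^c}\|_{\ell_1},
\]
this yields strict optimality of $\bfrho_0$. To obtain this inequality I would factor $\vect\Bc_T=\vect E D_T$ and $\vect\Bc_{T^c}=\vect S D_{T^c}$, with $D_T,D_{T^c}$ the diagonal matrices of column norms encoding the normalization that defines $\vect E$ and $\vect S$. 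Setting $\vect u=D_T\vect h_T$ and $\vect v=D_{T^c}\vect h_{T^c}$, the null-space identity $\vect\Bc\vect h=0$ becomes $\vect E\vect u=-\vect S\vect v$. Splitting $\vect E=\vect I-(\vect I-\vect E)$ and rearranging gives
\[
\vect u=-\vect S\vect v+(\vect I-\vect E)\vect u,
\]
and applying the submultiplicativity of $\|\cdot\|_{1\to1}$ yields
\[
\bigl(1-\|\vect E-\vect I\|_{1\to1}\bigr)\|\vect u\|_{\ell_1}\le\|\vect S\|_{1\to1}\|\vect v\|_{\ell_1}.
\]
The hypothesis $\|\vect S\|_{1\to1}<1-\|\vect E-\vect I\|_{1\to1}$ then gives the strict inequality $\|\vect u\|_{\ell_1}<\|\vect v\|_{\ell_1}$ whenever $\vect v\neq 0$; and if $\vect v=0$ then $\vect E\vect u=0$ forces $\vect u=0$, because $\|\vect E-\vect I\|_{1\to1}<1$ makes $\vect E$ invertible by a Neumann series, which would give $\vect h=0$, contradicting the assumption.

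The main obstacle I expect is the bookkeeping needed to carry the bound from the weighted quantities $\|\vect u\|_{\ell_1}$, $\|\vect v\|_{\ell_1}$ back to the unweighted $\|\vect h_T\|_{\ell_1}$, $\|\vect h_{T^c}\|_{\ell_1}$ that appear in the NSP. When the column normalizations defining $\vect E$ and $\vect S$ produce entries of $D_T$ and $D_{T^c}$ that are comparable in magnitude (a scenario ensured by the far-apart hypothesis \eqref{eq:svd}, under which $\|\vect\wg_0(\vect y_k)\|_{\ell_2}$ is essentially uniform over the relevant grid points), the weighted inequality transfers directly to the unweighted one. Once this normalization issue is cleanly handled, the rest of the argument is a routine Neumann-series perturbation together with the $\ell_1$ triangle inequality.
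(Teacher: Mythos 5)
Before comparing: the paper itself does not prove Theorem~\ref{theorem:hybrid l1}; it is quoted from \cite{CMP13}, and the appendix only proves the results of \S\ref{sec:random medium}. So your argument has to stand on its own. Your overall strategy is the right one and most of it is correct: feasibility of $\bfrho_0$ follows from the derivation of \eqref{eq:hybrid2}, and the null-space-property route via the splitting $\vect E=\vect I-(\vect I-\vect E)$, submultiplicativity of $\|\cdot\|_{1\rightarrow1}$, and invertibility of $\vect E$ by Neumann series when $\vect v=0$, is exactly the standard mechanism that the hypothesis $\|\vect S\|_{1\rightarrow1}<1-\|\vect E-\vect I\|_{1\rightarrow1}$ is designed to feed.

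The genuine gap is the one you flag at the end, and the repair you sketch rests on a false premise. To pass from the weighted inequality $\|D_T\vect h_T\|_{\ell_1}<\|D_{T^c}\vect h_{T^c}\|_{\ell_1}$ to the unweighted null space property you need the one-sided bound $\max_{j\in T^c}d_j\le\min_{j\in T}d_j$ on the column norms $d_j$ of $\vect\Bc$, not ``comparability'' of all the weights. Your justification --- that the $d_j$ are essentially uniform because $\|\vect\wg_0(\vect y_k)\|_{\ell_2}$ is essentially uniform over the IW --- is incorrect: since $|(\vect\Bc)_{ij}|=|\wU_{\cdot i}^\ast\vect\wg_0(\vect y_j)|^2$, the $\ell_1$ norm of the $j$-th column is $\|\vect{\mathcal{U}}^\ast\vect\wg_0(\vect y_j)\|_{\ell_2}^2$, which at a scatterer location equals $\|\vect\wg_0(\vect y_{n_i})\|_{\ell_2}^2$ by \eqref{eq:svd} but at a grid point far from all scatterers is smaller by a factor of order the squared coherence; the column norms of $\vect\Bc$ are therefore highly non-uniform. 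What actually saves the argument is this non-uniformity acting in the favorable direction: Bessel's inequality for the orthonormal columns of $\vect{\mathcal{U}}$ gives $d_j\le\|\vect{\mathcal{U}}^\ast\vect\wg_0(\vect y_j)\|_{\ell_2}^2\le\|\vect\wg_0(\vect y_j)\|_{\ell_2}^2$ for every $j$, while the diagonal entry alone gives $d_{n_i}\ge\|\vect\wg_0(\vect y_{n_i})\|_{\ell_2}^2$ for $n_i\in T$; combined with the near-constancy of $\|\vect\wg_0(\vect y_j)\|_{\ell_2}$ over the IW this yields $\max_{j\in T^c}d_j\le\min_{j\in T}d_j$, and then
\begin{equation*}
\min_{j\in T}d_j\sum_{j\in T}|h_j|\le\|D_T\vect h_T\|_{\ell_1}<\|D_{T^c}\vect h_{T^c}\|_{\ell_1}\le\max_{j\in T^c}d_j\sum_{j\in T^c}|h_j|
\end{equation*}
closes the argument. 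Replace the ``comparable weights'' claim with this domination argument (equivalently, verify Tropp's exact recovery condition $\max_{j\notin T}\|\vect\Bc_T^{-1}\vect\Bc_{\cdot j}\|_{\ell_1}<1$ for the unnormalized matrix, where the same ratio $d_j/\min_{i\in T}d_i\le1$ appears) and the proof is complete.
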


\subsection{Numerical experiments} \label{sec:numerical simulation single}
Now we present the results of a numerical experiment that shows the performance of the hybrid-$\ell_1$ method compared to those 
obtained with KM and MUSIC. We consider the same imaging set-up as in subsection \ref{sec:numerical simulation}. KM is a simple and robust imaging method (with respect to additive noise)
which propagates the imaging data received on the array back to the medium. Under the setup of imaging in \S\ref{sec:formulation},
the KM imaging functional can be written as
\begin{equation}\label{eq:KM}
    \bfrho_{KM}=\vect\Ac_{0\wf}^\ast\vect b.
\end{equation}
MUSIC, on the other hand, is a subspace projection algorithm that uses the SVD of the array response matrix.
The idea of MUSIC is to project the reference illumination vectors $\vect\wg_0(\vect y_j)$, $j=1,\dots,M$,
onto the noise space using the projection operator
\begin{equation}\label{eq:music projection}
    \mathcal{P}\vect\wg_0(\vect y)=\sum_{j=1}^M(\wU^\ast_{\cdot j}\vect\wg_0(\vect y))\wU_{\cdot j}-\vect\wg_0(\vect y).
\end{equation}
Then, according to \eqref{eq:svd}, the normalized functional
\begin{equation}\label{eq:music tr}
    \mathcal{I}_{MUSIC}(\vect y_k)=\frac{\min_{1\le j\le K}\|\mathcal{P}\vect\wg_0(\vect y_j)\|_{\ell_2}}{\|\mathcal{P}\vect\wg_0(\vect y_k)\|_{\ell_2}},\quad k=1,\ldots,K,
\end{equation}
will peak at the search points $\vect y_k$ only at locations where there is a scatterer.  Imaging with MUSIC
\eqref{eq:music tr} only  locates of the scatterer's positions. Their reflectivities are usually obtained using a 
separate procedure after the locations are known.

Figure~\ref{multiIllum2D} shows the simulation results with $100\%$ of additive noise in the data. From left to right, we display the images
obatined with KM, MUSIC and the hybrid-$\ell_1$ method. In the simulations for MUSIC and hybrid-$\ell_1$ we assume that
the top $5$ singular vectors have been obtained. In the simulations for KM we use $5$ random illuminations. 
It is apparent that the image created by the hybrid-$\ell_1$ method is better than the other two. 
The hybrid-$\ell_1$ method forms a clear image with  perfect recovery of the location of the scatterers,  and 
accurate estimates of their reflectivities. The robustness of the hybrid-$\ell_1$ method relies on the
selection of the top  singular values so that the noise in the
subspace formed by singular vectors associated with small singular values is filtered out.

\begin{figure}[htbp]
\begin{center}
\begin{tabular}{ccc}
\includegraphics[scale=0.2]{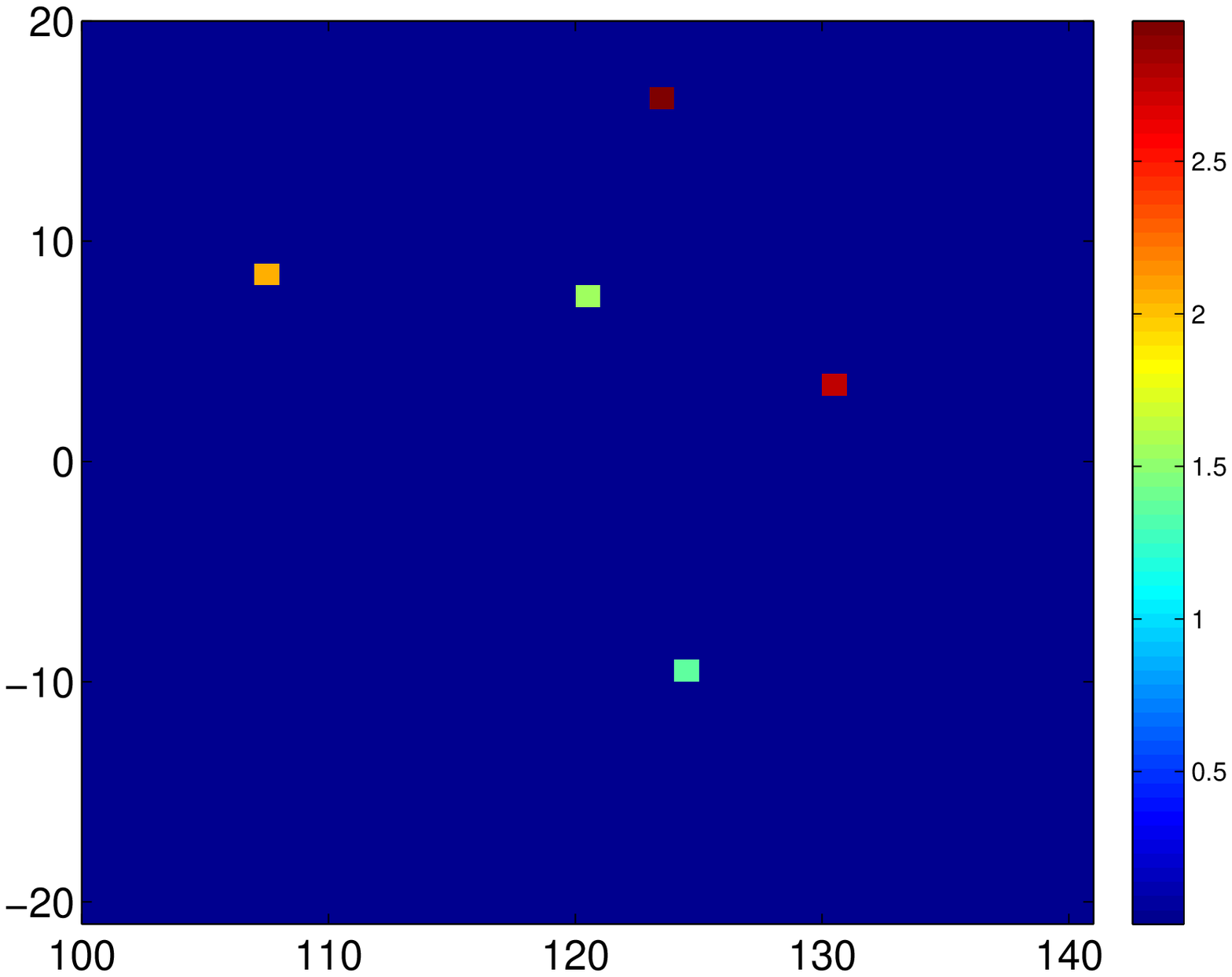}&
\includegraphics[scale=0.2]{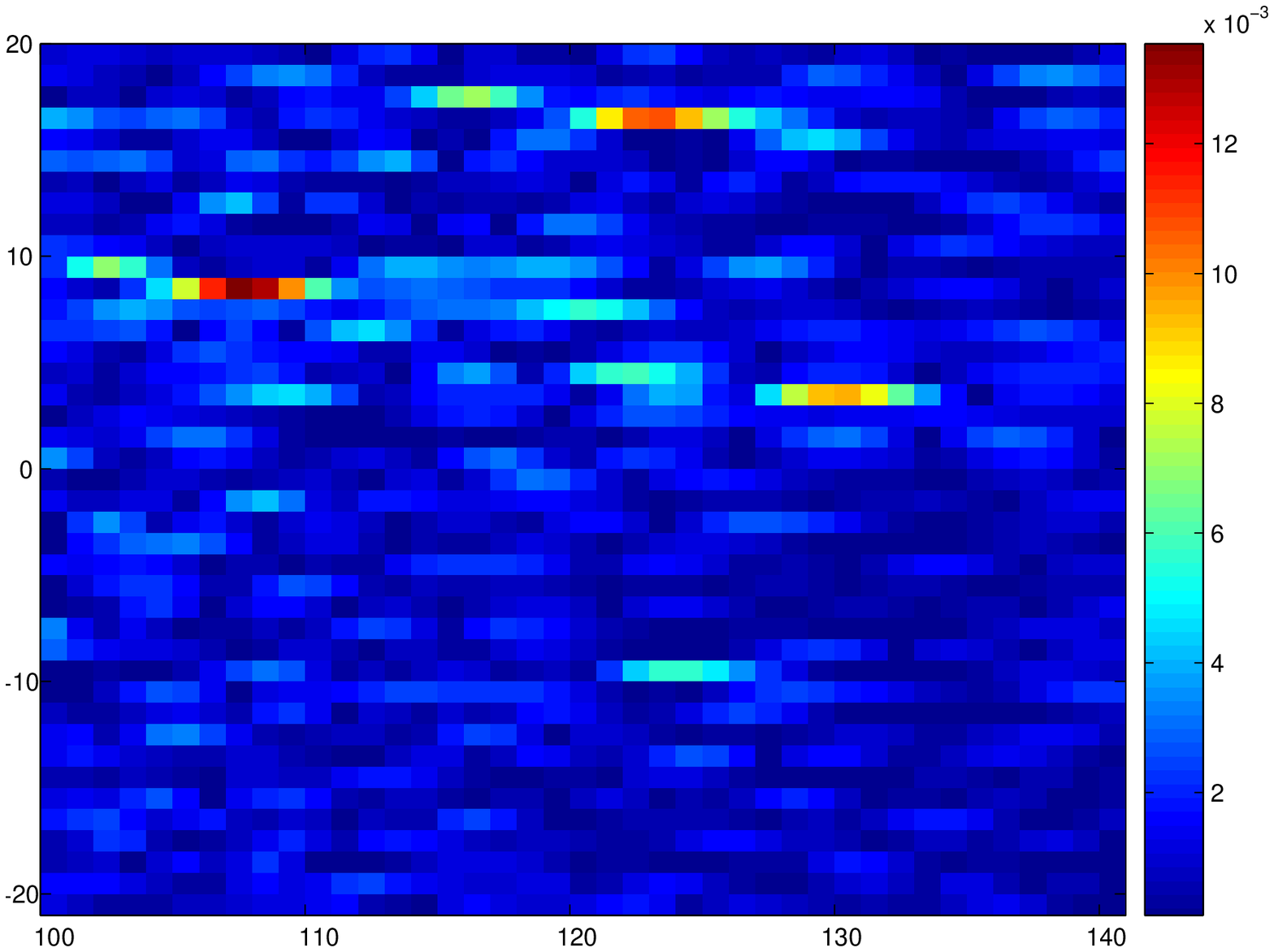}&
\includegraphics[scale=0.2]{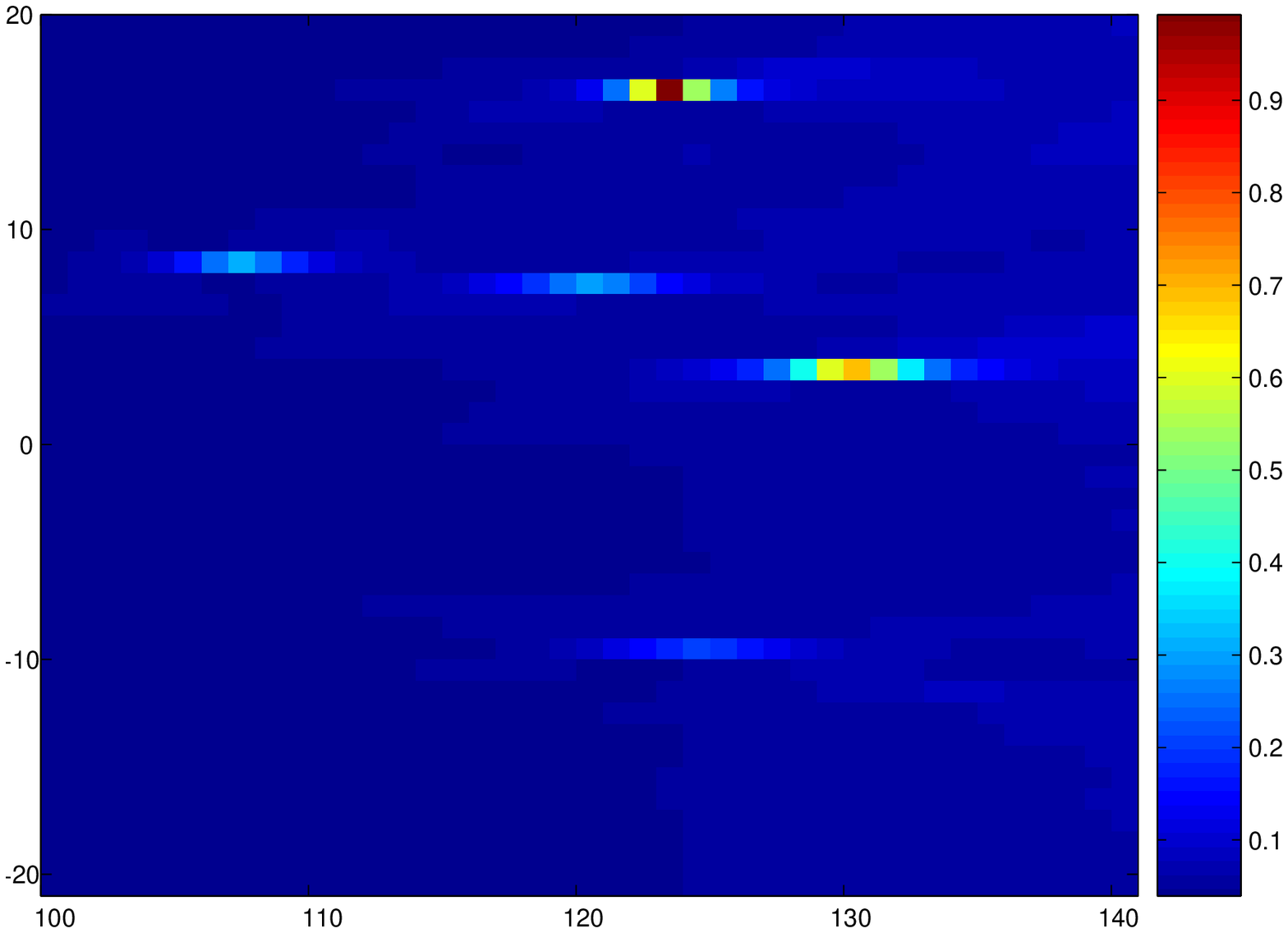}
\includegraphics[scale=0.2]{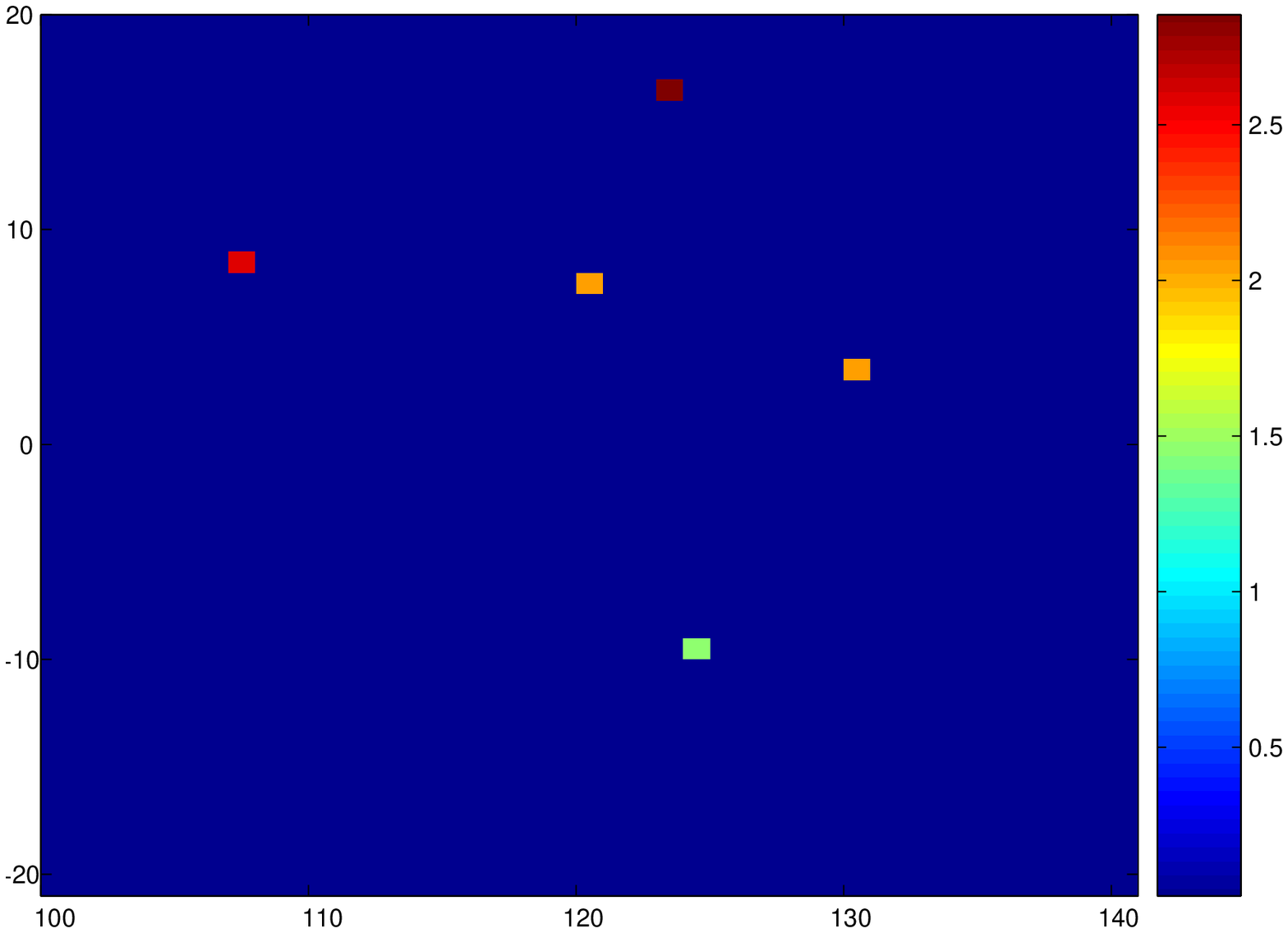}
\end{tabular}
\caption{Images obtained using KM (left), MUSIC (middle), and hybrid-$\ell_1$ (right) with $100\%$ noise in data.
We use $5$ illuminations in KM.}
    \label{multiIllum2D}
\end{center}
\end{figure}

\section{Active array imaging in random media}\label{sec:random medium}
Many natural media vary randomly in space, and therefore, waves propagating in these media also fluctuate in space.
Thus the data collected on the array inherit the uncertainty of the fluctuations of the medium resulting in wave distortion.
Because wave distortion effects induced by the inhomogeneities are very different from additive uncorrelated noise,
imaging through random media is very different from that in homogeneous media.
For example, the images obtained by  Kirchhoff migration become noisy and
change unpredictably with the detailed features of the medium, and thus, are useless.
In this section, we use a simple random phase model
to analyze the performance of the hybrid-$\ell_1$ and MUSIC methods in random media. Both methods use the SVD of the response matrix.
The random phase model distorts the wavefronts but keeps the amplitude of the waves unchanged and is valid in the regime of geometrical optics.

\subsection{Random phase model}\label{subsec:random phase model}
In random media, the Green's function that characterizes the wave propagation from
$\vect x$ to $\vect y$ is given by the wave equation
\begin{equation}\label{eq:wave equation}
\Delta\wG(\vect x, \vect y)+\kappa^2 n^2(\vect x)\wG(\vect x, \vect y)=\delta(\vect x-\vect y),
\end{equation}
where $n(\vect x)=\frac{c_0}{c(\vect x)}$ is the random index of refraction of the
medium with local wave speed $c(\vect x)$. In a homogeneous medium, we have $c(\vect x)\equiv c_0$
for any location $\vect x$, and hence, $\wG_0(\vect x,\vect y)$ defined in
\eqref{greenfunc} is the solution to \eqref{eq:wave equation}.
In random media, however, the wave speed $c(\vect x)$ depends on the position $\vect x$.
In this paper, we consider random fluctuations of the wave speed satisfying the model
\begin{equation}\label{eq:random wave speed}
\frac{1}{c^2(\vect x)}=\frac{1}{c_0^2}\bigg(1+\sigma\mu(\frac{\vect x}{l})\bigg)\, ,
\end{equation}
where $l$ is the correlation length of the medium, $\sigma$ determines the strength of
the flucutation around the constant speed $c_0$, and $\mu(\cdot)$ is a stationary random process with
zero mean and normalized autocorrelation function $R(|\vect x-\vect x'|)=\mE(\mu(\vect x)\mu(\vect x'))$, so $R(0)=1$.
Here, we only consider weak fluctuations such that $\sigma\ll1$. 

When the index of refraction $n(\vect x)$ is random, there is no analytic solution to \eqref{eq:wave equation},
and hence, numerical schemes are often used to obtained an approximated numerical solution. This can be time
consuming, especially for large distances $L$.
Instead, the random phase model provides
an analytical approximation for the Green's function
between two
points at a distance of order $L\gg l\gg \lambda$ from each other, given by
\begin{equation}\label{eq:random green func}
\wG(\vect x,\vect y)=\wG_0(\vect x,\vect y)
\exp{\left[\rmi\sigma\kappa|\vect x-\vect y|\int_0^1\mu(
\frac{\vect x}{l}+\frac{s}{l}(\vect y-\vect x))\,\rmd s\right]}.
\end{equation}
This approximation is valid when (i) the wavelength $\lambda$ is much smaller than the
correlation length $l$ so the geometric optics approximation holds, (ii) the correlation
length $l$ is much smaller than the propagation distance $L$ so the statistics of the phase
is Gaussian, and (iii) the strength of the fluctuation $\sigma$ is small so the amplitude of
the wave is kept unchanged, but large enough to ensure that the perturbations of the phases
are not negligible. The last condition holds when $\frac{\sigma^2L^3}{l^3}\ll\frac{\lambda^2}{\sigma^2lL}\le1$
(for details, see \cite{Tatarski61,Rytov89,Borcea11}). Note that although we take weak fluctuations, 
the distortion of the wavefronts by the inhomogeneities of the medium is observable
because the wave travels long distances. 

Comparing \eqref{eq:random green func} to the homogeneous Green's function \eqref{greenfunc} we see that, in this
regime, only the phase is perturbed by the random medium and the magnitude remains unchanged.
The following result shows that the second order moment of \eqref{eq:random green func}
is close to the expected value.
\begin{proposition}\label{prop:2nd moment}
Assume that the autocorrelation function $R\in\mL_2(\mR_+)$ is differentiable and its derivative
$\dot{R}$ satisfies $\frac{\dot{R}(t)}{t}\in\mL(\mR_+)$ with exponential decay. Let $\vect y_1$ and
$\vect y_2$ be two points on the same plane at a distance $L$ from point $\vect x$, such that
$\lambda\ll|\vect y_1-\vect y_2|\ll L$. Then, for the Green's function \eqref{eq:random green func} we have
\begin{equation}\label{eq:2nd moment}
\mE\bigg(\wG(\vect x, \vect y_1)\overline{\wG(\vect x,\vect y_2)}\bigg)\approx
\wG_0(\vect x, \vect y_1)\overline{\wG_0(\vect x, \vect y_2)}\rme^{-\frac{\kappa^2 a_e^2}{2L^2}|\vect y_1-\vect y_2|^2},
\end{equation}
with
\begin{equation}\label{eq:effective aperture}
a_e=\sigma L\left(-1-\frac{2L}{3l}\int_0^\infty\frac{\dot{R}(t)}{t}\,\rmd t\right)^{\frac{1}{2}}.
\end{equation}
\end{proposition}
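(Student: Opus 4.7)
The plan is to exploit the factorization of the random Green's function in \eqref{eq:random green func} into a deterministic amplitude and a random phase, and then to compute the characteristic function of the phase using Gaussianity.

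I would first write
\[
\wG(\vect x, \vect y_1)\overline{\wG(\vect x,\vect y_2)} = \wG_0(\vect x, \vect y_1)\overline{\wG_0(\vect x, \vect y_2)}\,\rme^{\rmi\eta},
\]
where $\eta$ is the difference of two line integrals of the zero-mean stationary field $\sigma\kappa|\vect x-\vect y_j|\mu$ along the straight paths from $\vect x$ to $\vect y_j$, $j=1,2$. Because $\mu$ is centered and stationary and the paths are long ($L\gg l$), a central limit theorem argument (or the standing Gaussian-phase assumption of the random phase regime) makes $\eta$ approximately a centered Gaussian random variable, so $\mE[\rme^{\rmi\eta}]=\rme^{-\mathrm{Var}(\eta)/2}$. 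The task thus reduces to showing that $\mathrm{Var}(\eta)\approx \kappa^2 a_e^2|\vect y_1-\vect y_2|^2/L^2$.

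To compute the variance, I would expand $\mathrm{Var}(\eta)=\mE[\eta^2]$ as a sum of four double integrals using $\mE[\mu(\vect a)\mu(\vect b)] = R(|\vect a-\vect b|)$, and substitute $|\vect x-\vect y_j|\approx L$. With $\vect y_1,\vect y_2$ coplanar at range $L$ from $\vect x$, the distance between two points on the paths, parameterized by $s,s'\in[0,1]$, reduces, up to lower-order corrections in $|\vect y_1-\vect y_2|/L$, to $\sqrt{(s-s')^2 L^2 + w(s,s')|\vect y_1-\vect y_2|^2}/l$ for a simple quadratic weight $w$. Combining the auto-path and cross-path contributions produces integrands of the form $R(|s-s'|L/l) - R(\sqrt{(s-s')^2 L^2 + w|\vect y_1-\vect y_2|^2}/l)$, which I would Taylor-expand in the small parameter $|\vect y_1-\vect y_2|^2/L^2$ to obtain, at leading order, $-\dot R(|s-s'|L/l)\cdot w|\vect y_1-\vect y_2|^2/(2|s-s'|Ll)$.

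The final step exploits $l\ll L$: since $\dot R$ decays rapidly, the integrand is concentrated in the thin strip $|s-s'|\lesssim l/L$. Changing variables to $t=|s-s'|L/l$ and using the hypothesis that $\dot R(t)/t\in\mL(\mR_+)$ with exponential decay, I would extend the $t$-integral to $(0,\infty)$ and evaluate the slow variable over $[0,1]$, collapsing the double integral to a constant multiple of $\int_0^\infty \dot R(t)/t\,\rmd t$. Matching constants with \eqref{eq:effective aperture} yields the claimed formula. The main obstacle is the careful bookkeeping required to recover the precise coefficients defining $a_e$: the constant $-1$ inside the bracket of \eqref{eq:effective aperture} is subleading with respect to the $L/l$-amplified contribution, and recovering it requires tracking next-order terms both in the path-length expansion $|\vect x-\vect y_j|=L+O(|\vect y_j|^2/L)$ and in the Taylor remainder of $R$. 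The integrability of $\dot R(t)/t$ is exactly what justifies extending the $t$-range to the positive real line and passing the limit $L/l\to\infty$ inside the integral.
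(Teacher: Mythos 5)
Your proposal is correct and follows essentially the same route as the paper: both reduce $\mE\big(\wG(\vect x,\vect y_1)\overline{\wG(\vect x,\vect y_2)}\big)$ to the Gaussian characteristic function $\rme^{-\mathrm{Var}(\eta)/2}$ of the phase difference and evaluate the variance by a first-order expansion in $|\vect y_1-\vect y_2|/L$ of the path-correlation double integrals, arriving at $\int_0^\infty \dot R(t)/t\,\rmd t$ after the substitution $t=|s-s'|L/l$ (the paper organizes the expansion as second derivatives of $\mE(\nu(\vect x,\vect y)\nu(\vect x,\vect y'))$ evaluated on the diagonal rather than as a direct Taylor expansion of the kernels, but the resulting integral is identical). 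One small correction to your closing remark: the subleading constant $-1$ in \eqref{eq:effective aperture} does not arise from second-order terms in the path-length expansion or in the Taylor remainder of $R$, but from the exact evaluation of the leading-order double integral $-\frac{1}{Ll}\iint \dot R(L|s-s'|/l)\,\frac{ss'}{|s-s'|}\,\rmd s\,\rmd s'$, whose numerator $ss'=s^2-s(s-s')$ contributes, beyond the thin-strip term $\tfrac13\cdot\tfrac{2}{Ll}\int_0^\infty\dot R(t)/t\,\rmd t$, a boundary piece proportional to $\int_0^s \dot R(Lu/l)\,\rmd u\approx -R(0)\,l/L$ that produces exactly the $-R(0)/L^2=-1/L^2$ in the paper's formula.
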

According to Proposition~\ref{prop:2nd moment}, the back-propagated signal in random media is equal
to that in a homogeneous medium times a Gaussian factor with variance $\frac{L^2}{\kappa^2a_e^2}$.
The length $a_e$ is an intrinsic property of the random media. It depends on
the propagation distance $L$ and the statistics of the random fluctuations of the medium.
According to \eqref{eq:2nd moment}, as $a_e$ increases, the refocused spot size for time reversal in random media is tighter.
Using the moment estimate \eqref{eq:2nd moment}, we immediately have the following variance estimate.
\begin{corollary}\label{cor:variance estimate}
Under the same conditions given in Proposition~\ref{prop:2nd moment}, we have for  \eqref{eq:random green func} 
\begin{eqnarray}\label{eq:variance estimate}
\mE\left|\wG(\vect x,\vect y_1)\overline{\wG(\vect x,\vect y_2)}-
\mE\left(\wG(\vect x,\vect y_1)\overline{\wG(\vect x,\vect y_2)}\right)\right|^2\approx\frac{1-\rme^{-\frac{\kappa^2a_e^2}{L^2}|\vect y_1-\vect y_2|^2}}{16\pi^2|\vect x-\vect y_1|^2|\vect x-\vect y_2|^2}.
\end{eqnarray}
\end{corollary}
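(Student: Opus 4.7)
The plan is to exploit the identity $\mE|X - \mE X|^2 = \mE|X|^2 - |\mE X|^2$ applied to $X = \wG(\vect x,\vect y_1)\overline{\wG(\vect x,\vect y_2)}$, and then evaluate each piece using (i) the unimodularity of the random phase factor in \eqref{eq:random green func} and (ii) the moment formula \eqref{eq:2nd moment} already established in Proposition~\ref{prop:2nd moment}.

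First I would observe that under the random phase model, the magnitude of the Green's function is untouched: from \eqref{eq:random green func},
\begin{equation*}
|\wG(\vect x,\vect y_j)| = |\wG_0(\vect x,\vect y_j)| = \frac{1}{4\pi|\vect x-\vect y_j|},
\end{equation*}
since the perturbation $\exp[\rmi\sigma\kappa|\vect x-\vect y_j|\int_0^1\mu(\cdots)\,\rmd s]$ has modulus one. Consequently $|X|^2 = |\wG_0(\vect x,\vect y_1)|^2|\wG_0(\vect x,\vect y_2)|^2$ is \emph{deterministic}, which gives
\begin{equation*}
\mE|X|^2 = \frac{1}{16\pi^2|\vect x-\vect y_1|^2|\vect x-\vect y_2|^2}.
\end{equation*}

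Next I would plug in the second moment estimate from Proposition~\ref{prop:2nd moment}. Taking the squared modulus of \eqref{eq:2nd moment} and using again $|\wG_0(\vect x,\vect y_j)|^2 = 1/(16\pi^2|\vect x-\vect y_j|^2)$,
\begin{equation*}
|\mE X|^2 \approx \frac{\rme^{-\frac{\kappa^2 a_e^2}{L^2}|\vect y_1-\vect y_2|^2}}{16\pi^2|\vect x-\vect y_1|^2|\vect x-\vect y_2|^2},
\end{equation*}
where the exponent doubles because $|\mE X|^2$ squares the Gaussian factor with variance $L^2/(\kappa^2 a_e^2)$ in \eqref{eq:2nd moment}. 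Subtracting gives exactly \eqref{eq:variance estimate}.

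There is no real obstacle here: the result is essentially a direct corollary, and the only subtlety is keeping track of the factor of two in the exponent when moving from \eqref{eq:2nd moment} (which has $1/2$ in the exponent) to $|\mE X|^2$ (which has $1$ in the exponent). The approximation sign $\approx$ is inherited from the approximation in Proposition~\ref{prop:2nd moment}; all error terms enter only through $|\mE X|^2$ since $\mE|X|^2$ is exact under the random phase model. One may note in passing that the assumptions on the autocorrelation $R$ are required only insofar as they are needed to invoke Proposition~\ref{prop:2nd moment}; they play no additional role in the corollary.
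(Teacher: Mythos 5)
Your proof is correct and follows essentially the same route as the paper: both arguments are direct applications of Proposition~\ref{prop:2nd moment} combined with the observation that the random phase factor is unimodular, so that $\mE|X|^2$ equals the deterministic constant $1/(16\pi^2|\vect x-\vect y_1|^2|\vect x-\vect y_2|^2)$ exactly. The only (cosmetic) difference is that you invoke the identity $\mE|X-\mE X|^2=\mE|X|^2-|\mE X|^2$ up front, whereas the paper expands the square directly and evaluates the cross term via \eqref{eq:difference square expectation}; the bookkeeping of the factor of two in the exponent is handled correctly in both.
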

This property of the Green's function \eqref{eq:random green func} is generally referred to as
{\it self-averaging}, which means that a time-reversed, back-propagated signal in a random medium refocuses near the source independently of the particular realization of the random medium. Self-averaging properties of back-propagated signals in random media have been shown under broad-band settings, see for example \cite{fink93, Blomgren02,BPR02, Papanicolaou04,pmsf}, but are not true, in general, for narrow-band signals. Corollary \ref{cor:variance estimate} states that
the Green's function corresponding to the phase random model \eqref{eq:random green func} exhibits
the same properties, even for narrow-band pulses.

The next result states that a single realization of a back-propagated signal via the Green's function vector $\vect\wg(\vect y)$ \eqref{eq:GreenFuncVec} is 
stable when the aperture of the imaging array becomes infinity, in the sense that its value is close to the
back-propagated signal averaged over multiple realizations. 
\begin{proposition}\label{prop:statistical stability}
For a large aperture size $a$, the signal sent from $\vect y_1$, recorded at the array, and back-propagated at $\vect y_2$ is statistically stable 
for  \eqref{eq:random green func} in the sense that
\begin{equation}\label{eq:statistical stability}
\frac{\mE|\vect\wg^\ast(\vect y_1)\vect\wg(\vect y_2)-\mE(\vect\wg^\ast(\vect y_1)\vect\wg(\vect y_2))|^2}
{\mE\|\vect\wg(\vect y_1)\|^2\mE\|\vect\wg(\vect y_2)\|^2}\rightarrow0,\qquad a\rightarrow\infty.
\end{equation}
\end{proposition}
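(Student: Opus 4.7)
The plan is to expand the numerator as a double sum over transducer pairs, isolate the variance contributions using the previous results in the section, and compare the total with the product of mean array energies in the denominator. Write $Z_i := \overline{\wG(\vect x_i, \vect y_1)}\wG(\vect x_i, \vect y_2)$, so that $\vect\wg^\ast(\vect y_1)\vect\wg(\vect y_2) = \sum_{i=1}^N Z_i$. The numerator then decomposes as
\begin{equation*}
\mE\Big|\sum_{i=1}^N Z_i - \mE\sum_{i=1}^N Z_i\Big|^2 = \sum_{i,j=1}^N \mathrm{Cov}(Z_i, Z_j),
\end{equation*}
where $\mathrm{Cov}(Z_i, Z_j) := \mE[(Z_i - \mE Z_i)\overline{(Z_j - \mE Z_j)}]$. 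The strategy is to show that the diagonal contribution is $O(N/L^4)$ and that only a bounded number of off-diagonal terms per row contribute significantly, so that the whole numerator grows at most linearly in $N$, while the denominator grows like $N^2$.

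For the diagonal terms, Corollary~\ref{cor:variance estimate} applied at $\vect x=\vect x_i$ gives $\mathrm{Var}(Z_i)=O\bigl(|\vect x_i-\vect y_1|^{-2}|\vect x_i-\vect y_2|^{-2}\bigr)=O(1/L^4)$, so that $\sum_i\mathrm{Var}(Z_i)=O(N/L^4)$. The main obstacle is the treatment of the off-diagonal covariances. Using the random phase model \eqref{eq:random green func} I write each $Z_i=\overline{\wG_0(\vect x_i,\vect y_1)}\wG_0(\vect x_i,\vect y_2)\exp(\rmi\Psi_i)$, where $\Psi_i$ is the difference of two line integrals of $\mu$ along the rays from $\vect x_i$ to $\vect y_1$ and to $\vect y_2$. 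For pairs $(i,j)$ with $|\vect x_i-\vect x_j|$ much larger than the correlation length $l$, the rays emanating from $\vect x_i$ and from $\vect x_j$ sample essentially disjoint portions of the random field $\mu$, overlapping only in tubular neighbourhoods of $\vect y_1$ and $\vect y_2$ of parametric length $O(l/|\vect x_i-\vect x_j|)$. Consequently $\Psi_i$ and $\Psi_j$ become approximately independent and $\mathrm{Cov}(Z_i,Z_j)$ decays rapidly in $|\vect x_i-\vect x_j|$. I would make this precise by exploiting the Gaussian approximation for the joint phase statistics from \cite{Tatarski61,Rytov89,Borcea11} and computing $\mE\exp[\rmi(\Psi_i-\Psi_j)]$ through its quadratic exponent, exactly as in the derivation that yields \eqref{eq:2nd moment}; the hypotheses on $R$ (differentiability, integrability of $\dot R(t)/t$) guarantee the required decay and the existence of the effective-aperture constant $a_e$. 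Summing over the $O(1)$ neighbours of each transducer then yields $\sum_{i\neq j}|\mathrm{Cov}(Z_i,Z_j)|=O(N/L^4)$ as well.

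For the denominator, the amplitude-preserving property of the random phase model gives $|\wG(\vect x_i,\vect y_k)|=|\wG_0(\vect x_i,\vect y_k)|$ pointwise, so $\mE\|\vect\wg(\vect y_k)\|^2=\sum_{i=1}^N|\wG_0(\vect x_i,\vect y_k)|^2$ is a deterministic quantity of order $N/L^2$, and hence $\mE\|\vect\wg(\vect y_1)\|^2\mE\|\vect\wg(\vect y_2)\|^2=\Theta(N^2/L^4)$. Combining the $O(N/L^4)$ bound on the numerator with this lower bound on the denominator, the ratio in \eqref{eq:statistical stability} is $O(1/N)$. Since the transducer spacing $h$ is fixed (of order $\lambda$), the aperture relation $a=(N-1)h$ forces $N\to\infty$ as $a\to\infty$, and the ratio tends to zero, proving the proposition. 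The most delicate step is the rigorous decorrelation estimate for $\Psi_i-\Psi_j$ when $\vect x_i$ and $\vect x_j$ lie within a few correlation lengths of each other, where the overlap between the four ray segments must be handled carefully so that the near-diagonal contributions do not destroy the $O(N)$ upper bound.
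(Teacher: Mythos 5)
Your overall architecture --- write the numerator as $\sum_{i,j}\mathrm{Cov}(Z_i,Z_j)$, discard off-diagonal covariances once the transducers are separated by more than the correlation length $l$, and compare against the product of array energies --- is the same as the paper's. The genuine gap is in the quantitative bookkeeping: you treat every amplitude $|\wG_0(\vect x_i,\vect y_k)|=(4\pi|\vect x_i-\vect y_k|)^{-1}$ as uniformly of order $1/L$, which holds only for transducers within distance $O(L)$ of the image window. In the limit $a\to\infty$ most transducers are much farther away, and the array energy is
\begin{equation*}
\mE\|\vect\wg(\vect y_k)\|^2=\sum_{i=1}^N\frac{1}{16\pi^2|\vect x_i-\vect y_k|^2}\approx\frac{1}{16\pi h^2}\log\Big(1+\frac{a^2}{4L^2}\Big),
\end{equation*}
which grows only logarithmically in $a$, not like $N/L^2$. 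Hence the denominator is $\Theta\big(\log^2(1+\frac{a^2}{4L^2})\big)$, not $\Theta(N^2/L^4)$. Once this is corrected, your numerator bound $O(N/L^4)$ no longer closes the argument --- $N/\log^2 a\to\infty$ --- and your claimed $O(1/N)$ decay of the ratio contradicts the paper's own conclusion, stated in \eqref{eq:log decay} and the surrounding discussion, that the decay is only logarithmic in $a$ unless $L\gg a$ (the paraxial regime, where it becomes quadratic).

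The repair is to retain the geometric decay of the summands in both numerator and denominator and pass to continuum integrals, as the paper does: the numerator is approximately
\begin{equation*}
\frac{1}{h^4}\Big(1-\rme^{-\frac{\kappa^2a_e^2}{L^2}|\vect y_1-\vect y_2|^2}\Big)\int_{\Omega(\vect x)}\wG_0(\vect x,\vect y_1)\overline{\wG_0(\vect x,\vect y_2)}\,\rmd\vect x\ \overline{\int_{B(\vect 0,l)}\wG_0(\vect x',\vect y_1)\overline{\wG_0(\vect x',\vect y_2)}\,\rmd\vect x'},
\end{equation*}
where the decorrelation beyond $l$ is precisely what confines one of the two integrals to a ball of radius $l$, contributing only $O(l^2/L^2)$, while the other contributes at most $\log(1+\frac{a^2}{4L^2})$; dividing by the denominator yields the bound $\frac{l^2}{L^2}\log^{-1}(1+\frac{a^2}{4L^2})\to0$. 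A secondary point: your heuristic that rays from $\vect x_i$ and $\vect x_j$ sample ``essentially disjoint'' portions of $\mu$ must contend with the fact that all rays converge at $\vect y_1$ and $\vect y_2$ and so always overlap near those points; the usable statement is that the covariance of the phase increments $\vartheta(\vect x_i,\vect y_1)-\vartheta(\vect x_i,\vect y_2)$ and $\vartheta(\vect x_j,\vect y_1)-\vartheta(\vect x_j,\vect y_2)$ is negligible once $|\vect x_i-\vect x_j|\gg l$, which is the form the paper exploits.
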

Furthermore, as shown in Appendix~\ref{appendix:proofs}, the decay rate of \eqref{eq:statistical stability}
is bounded by
\begin{equation}\label{eq:log decay}
\left(1-\rme^{-\frac{\kappa^2}{L^2}|\vect y_1-\vect y_2|^2a_e^2}\right)\left(\frac{l^2}{L^2\log\left(1+\frac{a^2}{4L^2}\right)}\right)
\end{equation}
for any  aperture size $a$. Thus, for short distances so $L$ is of order $a$, the decay rate is only logarithmic in $a$. However, 
when the distance is large such
that $L\gg a$ (e.g. the remote sensing regime), we can use a linear approximation on the logarithm function and obtain, up to a constant,
the approximated decay rate 
\begin{equation}\label{eq:quadratic decay}
\left(1-\rme^{-\frac{\kappa^2}{L^2}|\vect y_1-\vect y_2|^2a_e^2}\right)\left(\frac{l}{a}\right)^2,
\end{equation}
which implies a quadratic decay rate in $a$. Such quadratic decay rate in $a$ can also be justified by using the paraxial approximation when
$L \gg a$. As it is noted in Remark~\ref{rem:paraxial}, the decay rate in the paraxial approximation is given by \eqref{eq:paraxial approx}, so
the bound in \eqref{eq:quadratic decay} coincides with the paraxial approximation up to oscillations caused by the $\sinc$ function.

We emphasize that, although \eqref{eq:log decay} indicates little improvement in the refocused spot size when the physical aperture of
the array $a$ is large, Proposition~\ref{prop:statistical stability} implies that large arrays stabilize refocusing
of narrow-band pulses in random media. The fact that large arrays improve the quality of imaging in cluttered media has been recognized
for long time in radar and seismic imaging. The theoretical results obtained here using the random phase model support this empirical
observation.

Next, we give a result that is essential for the performance of hybrid-$\ell_1$ and MUSIC methods in random media.
When imaging in random media, the Green's function vector $\vect\wg(\vect y)$ 
is random and not known. The best we can do is to use the deterministic Green's function vector $\vect\wg_0(\vect y)$ which
is, in general, quite different from the random one. Furthermore, replacement of $\vect\wg(\vect y)$ by $\vect\wg_0(\vect y)$
might give good results for some realizations of the random medium but not for others. The next result tells us that a single realization of 
a narrow-band backpropagated signal $\vect\wg_0^\ast(\vect y_1)\vect\wg(\vect y_2)$, with the Green’s function corresponding to the
phase random model, is stable as the aperture
of the array becomes infinity, which means that its value is close to the back-propagated signal averaged over multiple
realizations.
\begin{proposition}\label{prop:asymp orthogonality}
Let $\vect y_1$ and $\vect y_2$ be two points satisfying $\lambda\ll|\vect y_1-\vect y_2|\ll L$.
Then, for the Green's function \eqref{eq:random green func}, we asymptotically have
\begin{equation}\label{eq:o1}
\vect\wg^\ast(\vect y_1)\vect\wg(\vect y_2)\rightarrow0\,,\qquad a\rightarrow\infty,
\end{equation}
and for the mixed inner products
\begin{equation}\label{eq:o2}
\vect\wg_0^\ast(\vect y_1)\vect\wg(\vect y_2)\rightarrow0\,,\qquad a\rightarrow\infty\, ,
\end{equation}
where the limit is under probability measure $\mathbb{P}$ induced by $\mu(\cdot)$. Moreover, the mixed
inner product is also statistically stable in the sense that
\begin{equation}\label{eq:mixed stability}
\frac{\mE\left|\vect\wg_0^\ast(\vect y_1)\vect\wg(\vect y_2)-\mE(\vect\wg_0^\ast(\vect y_1)\vect\wg(\vect y_2))\right|^2}
{\|\vect\wg_0(\vect y_1)\|^2\mE\|\vect\wg(\vect y_2)\|^2}\rightarrow0,\quad a\rightarrow\infty.
\end{equation}
\end{proposition}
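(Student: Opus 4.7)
My plan is to reduce all three claims to moment computations on sums of $N$ scalar quantities over the array elements, using Proposition~\ref{prop:2nd moment} and Corollary~\ref{cor:variance estimate} for the moment inputs, a non-stationary phase argument for the decay of the mean, and Chebyshev's inequality to pass from $L^2$ control to convergence in probability under $\mathbb{P}$.

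First I would expand
\begin{equation*}
\vect\wg^\ast(\vect y_1)\vect\wg(\vect y_2)=\sum_{i=1}^N\overline{\wG(\vect x_i,\vect y_1)}\,\wG(\vect x_i,\vect y_2)
\end{equation*}
and the analogous sum for the mixed inner product. Taking expectations and inserting \eqref{eq:2nd moment} from Proposition~\ref{prop:2nd moment} gives
\begin{equation*}
\mE\bigl[\vect\wg^\ast(\vect y_1)\vect\wg(\vect y_2)\bigr]\approx\rme^{-\frac{\kappa^2a_e^2}{2L^2}|\vect y_1-\vect y_2|^2}\sum_{i=1}^N\overline{\wG_0(\vect x_i,\vect y_1)}\wG_0(\vect x_i,\vect y_2).
\end{equation*}
By \eqref{greenfunc} each summand carries the rapidly oscillating phase $\rme^{\rmi\kappa(|\vect x_i-\vect y_2|-|\vect x_i-\vect y_1|)}$; since $|\vect y_1-\vect y_2|\gg\lambda$, this phase has no stationary point on the array, so the Riemann sum approximating an integral across the aperture of size $a$ decays as $a\to\infty$ by the standard non-stationary phase lemma (in the paraxial regime this reduces to the $\sinc$-type decay noted in Remark~\ref{rem:paraxial}). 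After normalization by $(\mE\|\vect\wg(\vect y_1)\|\,\mE\|\vect\wg(\vect y_2)\|)$ the mean vanishes. For the mixed inner product the same mechanism applies once one observes that under the random phase model $\mE\wG(\vect x_i,\vect y_2)=\wG_0(\vect x_i,\vect y_2)\,\rme^{-\tfrac12\sigma^2\kappa^2|\vect x_i-\vect y_2|^2\Phi}$ for a positive scalar $\Phi$ built from $R$, so the oscillatory $\wG_0$ structure survives intact.

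To upgrade the decay of the means into \eqref{eq:o1}--\eqref{eq:o2}, I would bound the corresponding normalized variances and invoke Chebyshev. The variance of $\vect\wg^\ast(\vect y_1)\vect\wg(\vect y_2)$ is precisely the object controlled by Proposition~\ref{prop:statistical stability}, which gives \eqref{eq:o1} at once. For the mixed product, expanding
\begin{equation*}
\bigl|\vect\wg_0^\ast(\vect y_1)\vect\wg(\vect y_2)-\mE[\vect\wg_0^\ast(\vect y_1)\vect\wg(\vect y_2)]\bigr|^2=\sum_{i,j}\overline{\wG_0(\vect x_i,\vect y_1)}\wG_0(\vect x_j,\vect y_1)\,\Delta_{ij},
\end{equation*}
with $\Delta_{ij}=\wG(\vect x_i,\vect y_2)\overline{\wG(\vect x_j,\vect y_2)}-\mE\wG(\vect x_i,\vect y_2)\,\overline{\mE\wG(\vect x_j,\vect y_2)}$, the diagonal terms $i=j$ are controlled by Corollary~\ref{cor:variance estimate} and contribute $O(1/N)$ after normalization by $\|\vect\wg_0(\vect y_1)\|^2\,\mE\|\vect\wg(\vect y_2)\|^2\sim N^2/L^4$, while the off-diagonal terms decorrelate once $|\vect x_i-\vect x_j|\gg l$ because the two propagation paths then sample essentially disjoint regions of the medium; what residual off-diagonal contribution remains is damped by the non-stationary phase carried by the accompanying $\wG_0$ factors. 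This yields \eqref{eq:mixed stability}, and Chebyshev applied once more delivers \eqref{eq:o2}.

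The main obstacle is the off-diagonal four-point moment estimate invoked above: one must control
\begin{equation*}
\mE\bigl[\wG(\vect x_i,\vect y_2)\overline{\wG(\vect x_j,\vect y_2)}\bigr]-\mE\wG(\vect x_i,\vect y_2)\,\overline{\mE\wG(\vect x_j,\vect y_2)}
\end{equation*}
with sufficient decay in $|\vect x_i-\vect x_j|/l$ that the resulting double sum, divided by $N^2$, tends to zero as $a\to\infty$. This is the same mechanism underlying the decay rates \eqref{eq:log decay}--\eqref{eq:quadratic decay} used in the proof of Proposition~\ref{prop:statistical stability}, and it leans crucially on the integrability and exponential decay hypotheses on $R$ and $\dot R/t$ from Proposition~\ref{prop:2nd moment}. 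Once this covariance bound is established, the remainder of the argument is bookkeeping: combining the vanishing of the mean with the Chebyshev estimate produces \eqref{eq:o1}, \eqref{eq:o2} and \eqref{eq:mixed stability} simultaneously.
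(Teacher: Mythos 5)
Your plan follows essentially the same route as the paper's proof: compute the means via the second-moment formulas (the paper gets $\mE(\vect\wg_0^\ast(\vect y_1)\vect\wg(\vect y_2))\approx\sum_j\overline{\wG_0(\vect x_j,\vect y_1)}\wG_0(\vect x_j,\vect y_2)\rme^{-\frac{1}{2}\kappa^2\sigma^2l\widehat R(0)|\vect x_j-\vect y_2|}$, matching your damped-$\wG_0$ structure), control the variance by observing that the cross-correlation $\mE\vartheta(\vect x_j,\vect y_2)\vartheta(\vect x_{j'},\vect y_2)$ is non-negligible only when the two paths lie within the correlation length $l$, and conclude by Chebyshev. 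The ``main obstacle'' you flag is resolved in the paper exactly by the mechanism you name: the off-diagonal sum collapses to an integral over $\Omega(\vect x)$ times one over $B(\vect 0,l)$, yielding the ratio $\log(1+l^2/4L^2)/\log(1+a^2/4L^2)\to0$ (note the normalization grows like $\log(1+a^2/4L^2)/h^2$ rather than like $N$, so the diagonal terms vanish logarithmically, not as $O(1/N)$).
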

Proposition~\ref{prop:asymp orthogonality} implies that imaging in random media,
i.e., a narrow-band pulse $\vect\wg(\vect y_2)$ back-propagated via $\vect\wg_0(\vect y_1)$ in the medium, is 
statistically stable for large arrays, and the pulse heading to the point $\vect y_1$ will 
focus around the correct location $\vect y_2$.

\subsection{The hybrid-$\ell_1$ method}\label{subsec:hybrid l1}
Assume that multiple scattering between the scatterers is negligible, and wave distortion is well described by the 
random phase model \eqref{eq:random green func}. Then, the response matrix has the form
\begin{equation}
\label{eq:randomP}
\vect\wP =\sum_{j=1}^M\alpha_j\vect\wg(\vect y_{n_j})\vect\wg^T(\vect y_{n_j}) \, ,
\end{equation}
with random Green's function vectors $\vect \wg(\vect y)=[\wG(\vect x_{1},\vect y),\ldots,\wG(\vect x_{N},\vect y)]^T$, 
where $\wG(\vect x,\vect y)$ is given by \eqref{eq:random green func}. 
Because multiple scattering is not important, the transformation $\vect\Ac_{\wf}\bfrho=\vect b$
that relates the reflectivity vector $\bfrho_0$ with the data measured at the array $\vect b$ is linear. However, 
when the medium contains random inhomogeneities, 
this linear transformation is random because 
the column vectors of the operator $\vect\Ac_{\wf}$  are random functions.
Therefore, to solve for the reflectivity vector $\bfrho_0$,
we have to approximate the unknown Green's function vectors $\vect\wg(\vect y)$ by the ones for a homogeneous medium, 
after which, $\vect\Ac_{\wf}$ becomes the linear operator $\vect\Ac_{0\wf}$ given in \S\ref{sec:single scattering}.
The substitution of $\vect\wg(\vect y)$ by $\vect\wg_0(\vect y)$
introduces a discrepancy between model and data, and thus, we must solve the $\ell_1$ minimization 
problem \eqref{eq:smv born noise} with an inequality constraint when data from  a single illumination is available.
We note that to estimate the error bound of such discrepancy is nontrivial  due to the existing correlation in 
the measurement noise in random media.

According to Proposition~\ref{prop:asymp orthogonality}, when the scatterers are far apart, the random Green's
function vectors $\vect\wg(\vect y_{n_j})$, $j=1,\dots,M$, are approximately orthogonal in probability as the size
of the imaging array becomes large. In this case, we can associate to each of the $M$ scatterers a nonzero singular value 
$\sigma_j=\alpha_j\|\vect\wg(\vect y_{n_j})\|_{\ell_2}^2$ with singular vectors 
\begin{equation}\label{eq:random svd}
    \wU_{\cdot j}=\frac{\vect\wg(\vect y_{n_j})}{\|\vect\wg(\vect y_{n_j})\|_{\ell_2}},\quad
    \wV_{\cdot j}=\overline{\frac{\vect\wg(\vect y_{n_j})}{\|\vect\wg(\vect y_{n_j})\|_{\ell_2}}}\, ,\qquad j=1,\dots,M.
\end{equation}
If the full array response matrix $\vect\wP$ (or its SVD) is available, we can use the hybrid-$\ell_1$ as follows.
We use the right singular vectors of $\vect\wP$ 
as the illumination vectors, and we project the data to the space spanned by the left singular vectors in the same way as
shown in \S\ref{sec:single scattering}. Thus, we form the same hybrid-$\ell_1$ optimization given in \eqref{eq:hybrid l1} and
there is no need to estimate the error bound of the discrepancy between model and data.
Recall that $(ij)^\mathrm{th}$ entry of the hybrid-$\ell_1$ matrix $\vect\Bc$ is related to the singular vectors in the form of
$\overline{\vect\wg_0^\ast(\vect y_j)\wU_{\cdot i}}\vect\wg_0^T(\vect y_j)\wV_{\cdot i}$, and the singular vectors of the response
matrix $\vect\wP$ are related to the random Green's function vectors as in \eqref{eq:random svd}. Hence, each entry
is the mixed inner product \eqref{eq:o2} between $\vect\wg_0(\vect y_j)$ and $\vect\wg(\vect y_i)$.

Without loss of generality, assume that the $M$ scatterers correspond to the first $M$ columns in $\vect\Bc$, i.e. $n_j=j$,
$j=1,\ldots,M$. We can write $\vect\Bc$ in the form of block matrices as $\vect\Bc=[\vect\Bc_{M\times M}\ \ \vect S]$, where the
columns in $\vect S$ correspond to grid points where there is no scatterers. Due to Proposition~\ref{prop:asymp orthogonality},
the submatrix $\vect\Bc_{M\times M}$ is a diagonal matrix $\vect D$ perturbed by a matrix $\vect E$ with $\|\vect E\|_{1\rightarrow1}\ll1$.
The value of $\|\vect E\|_{1\rightarrow1}$ depends on the minimal distance between any two scatterers, and is close to zero if they are
well separated. Due to the incomplete phase cancellation in the mixed inner products, $\vect D$ is composed of complex values
but, as long as the scatterers are far apart, $\vect D +\vect E$ is diagonal dominated. 
Furthermore, because of the statistical stability of the mixed inner product \eqref{eq:mixed stability}, if
$\|\vect D^{-1}\vect S\|_{1\rightarrow1}<1-\|\vect D^{-1}\vect E\|_{1\rightarrow1}$,  \eqref{eq:hybrid l1}  gives the right
solution even though there exists (correlated) noise in the data caused by the random media. 
This saves computational time by eliminating the need for difficult error bounds testing.

\subsection{MUSIC}\label{subsec:music}
In random media, the exact knowledge of the reference illumination $\vect\wg(\vect y)$ is unknown,
and therefore imaging using MUSIC amounts to approximate the unknow illumination with that in homogeneous context, i.e.
\begin{equation}\label{eq:music imager}
    \mathcal{I}^{RM}_{MUSIC}(\vect y_k)=\frac{\min_{1\le j\le K}\|\mathcal{P}\vect\wg_0(\vect y_j)\|_{\ell_2}}{\|\mathcal{P}\vect\wg_0(\vect y_k)\|_{\ell_2}},\quad k=1,\ldots,K.
\end{equation}
According to \eqref{eq:random svd} and \eqref{eq:o2}, the proxy $\vect\wg_0(\vect y)$ will focus around the scatterers when the imaging
array is large enough. However, the image obtained by \eqref{eq:music imager} will have lower resolution compared to
the ideal case of \eqref{eq:music tr}, since the random phase in $\vect\wg(\vect y)$ cannot be fully cancelled
out using the homogenous Green's function vector $\vect\wg_0(\vect y)$.

\subsection{Numerical experiments}
\label{sec:numerical simulation random}

\begin{figure}[t]
\begin{center}
\includegraphics[scale=0.25]{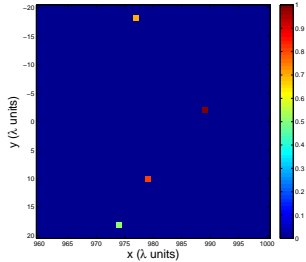}
\caption{Original configuration of the scatterers in a $41\times41$ image window with grid points separated by $1$.}
\label{original}
\end{center}
\end{figure}

We now present numerical simulations in two dimensions to illustrate the performance  of the MUSIC and hybrid-$\ell_1$ imaging methods in random media. 
We consider a random medium with correlation length $l=20\lambda$, and standard deviation of the fluctuations $\sigma=0.1\%$. 
For comparison purposes we also show images obtained by Kirchhoff migration with a single illumination sent from
the central transducer of the array. 

We consider a small and a large array, both consisting of $501$ transducers uniformily distributed over the aperture.
The small array has an aperture of $25l$, and the large array an aperture of $100l$. 
Four scatterers are placed within an IW  of size $41\lambda\times41\lambda$,
which is at a distance $L= 50 l$ from the linear array, see Figure~\ref{original}. 
We discretize the IW using a uniform grid with points separated by one wavelength $\lambda$ (the spatial unit in all the figures is $\lambda$).
The amplitudes of the reflectivities of the scatterers, $|\alpha_j|$, are $0.8$, $1.0$, $0.5$, and $0.7$.  
The phases are set randomly in each realization. 
We compute the array data \eqref{eq:randomP} using the Green's function given by \eqref{eq:random green func}. 
The line integral of the random field in \eqref{eq:random green func} is approximated 
by a quadrature rule. 

As a reference, we show in Figure~\ref{fig:homogeneous} the images obtained in a homogeneous medium using  KM (left column), MUSIC (center column), 
and hybrid-$\ell_1$ methods (right column) with noiseless data. The top and bottom rows show the results for the small array and the large array, respectively. 
As expected, the resolution of the KM images improves greatly for large arrays. On the other hand, in a homogeneous medium, MUSIC and hybrid-$\ell_1$ 
achieve an excellent resolution, even for small arrays. 
The images shown in Figure~\ref{fig:homogeneous} do not change too much when the data is corrupted with up to 
$100 \%$ of additive noise \cite{CMP13}. 

\begin{figure}[t]
\begin{center}
\begin{tabular}{ccc}
\includegraphics[scale=0.20]{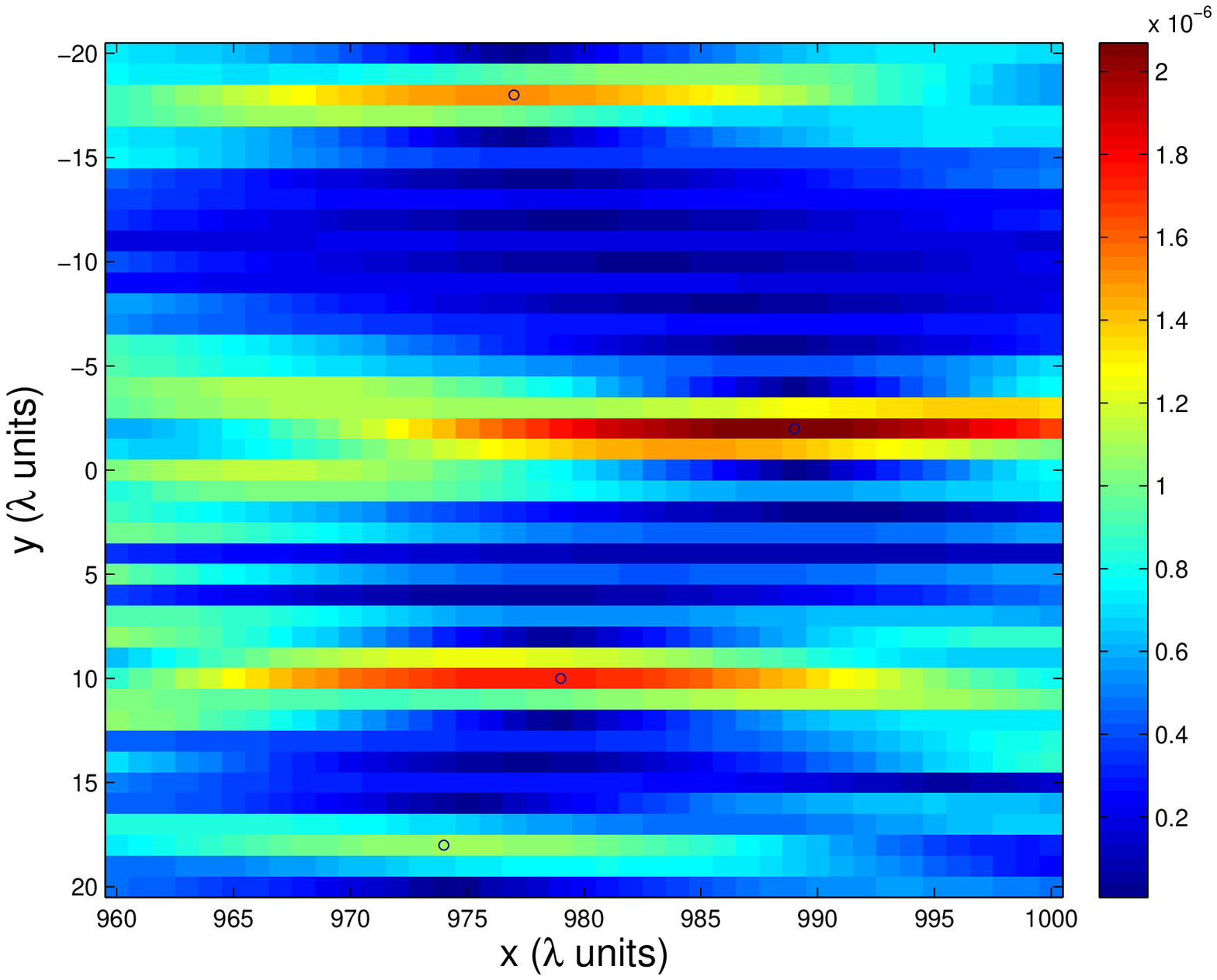}&
\includegraphics[scale=0.20]{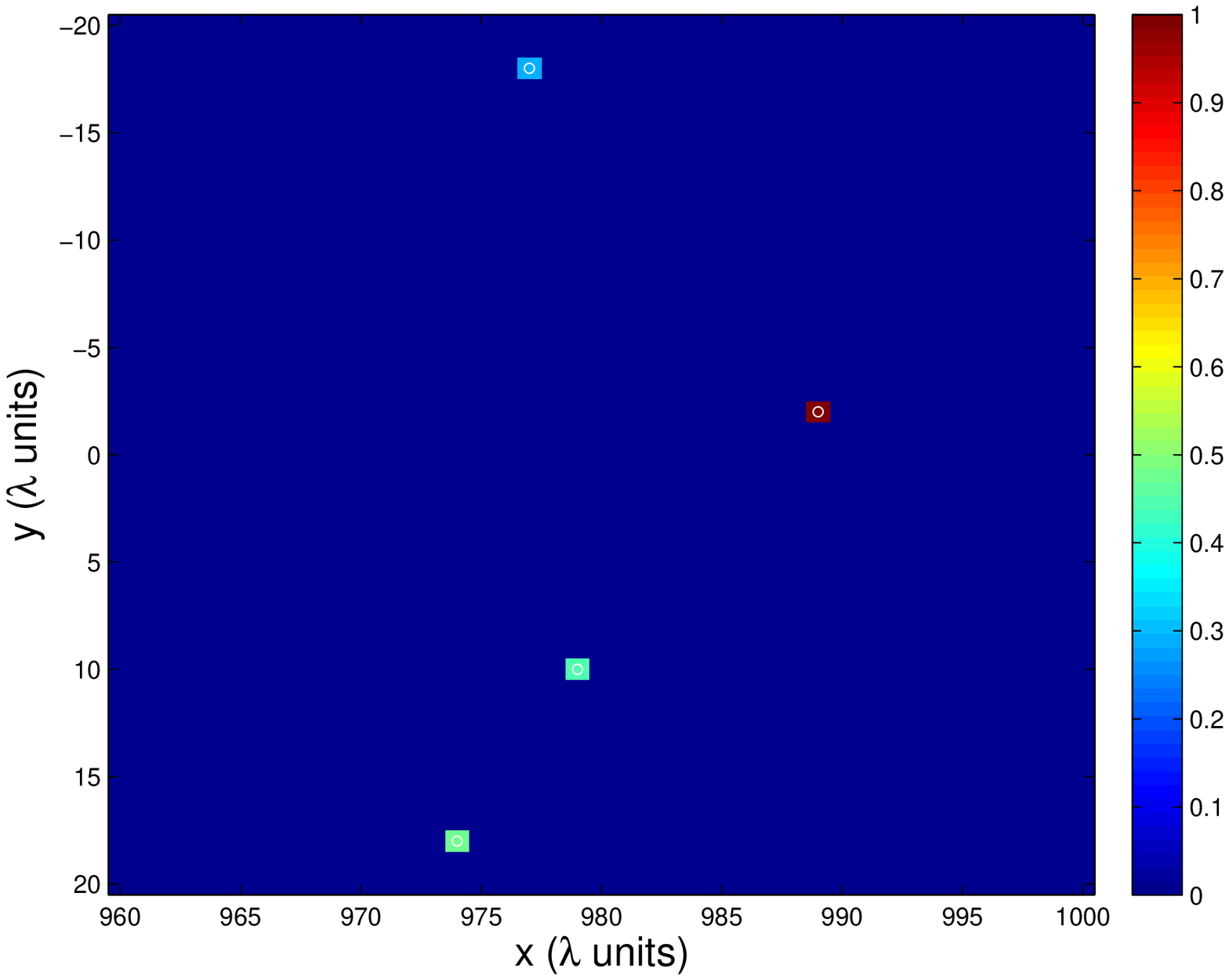}&
\includegraphics[scale=0.20]{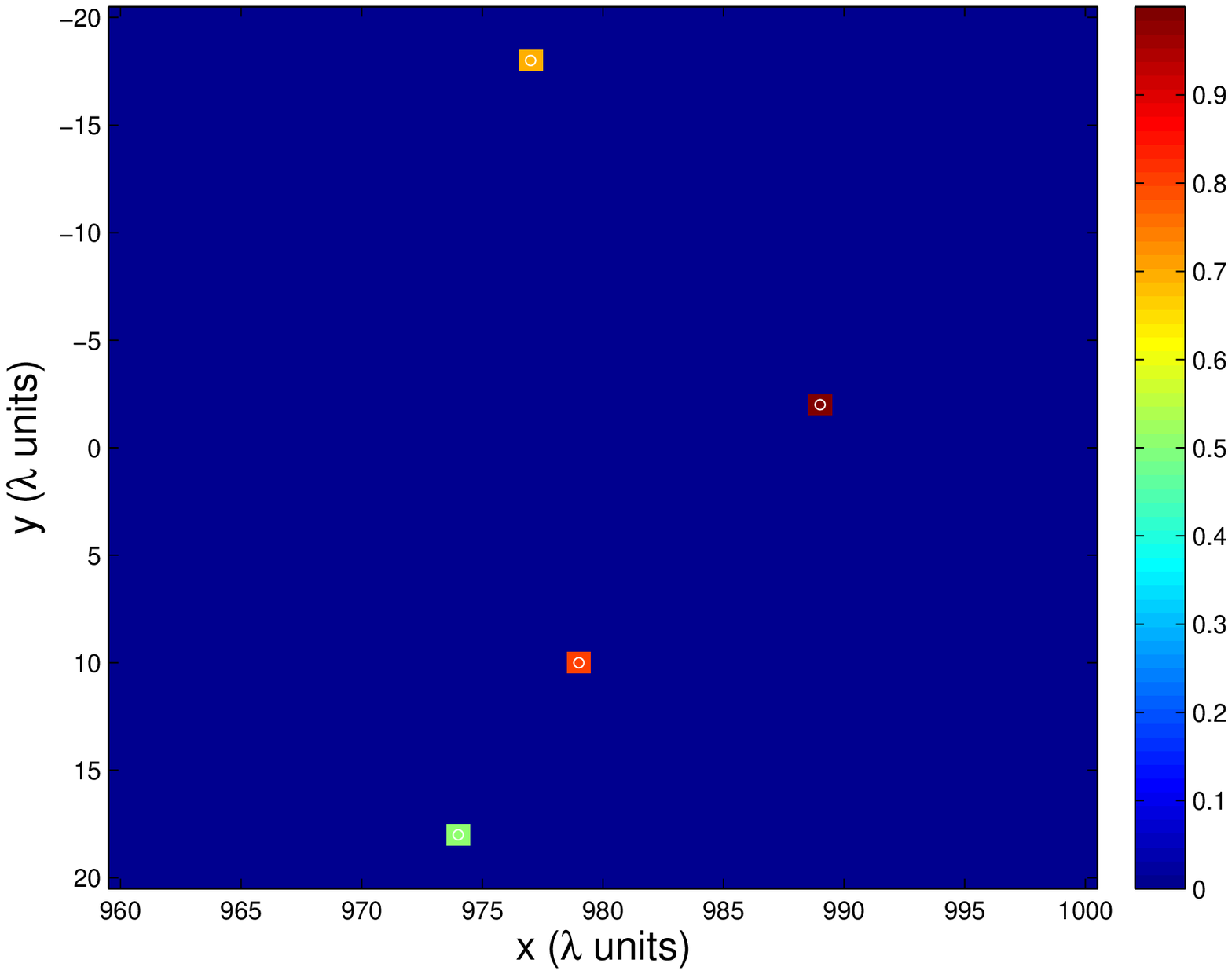}\\
\includegraphics[scale=0.20]{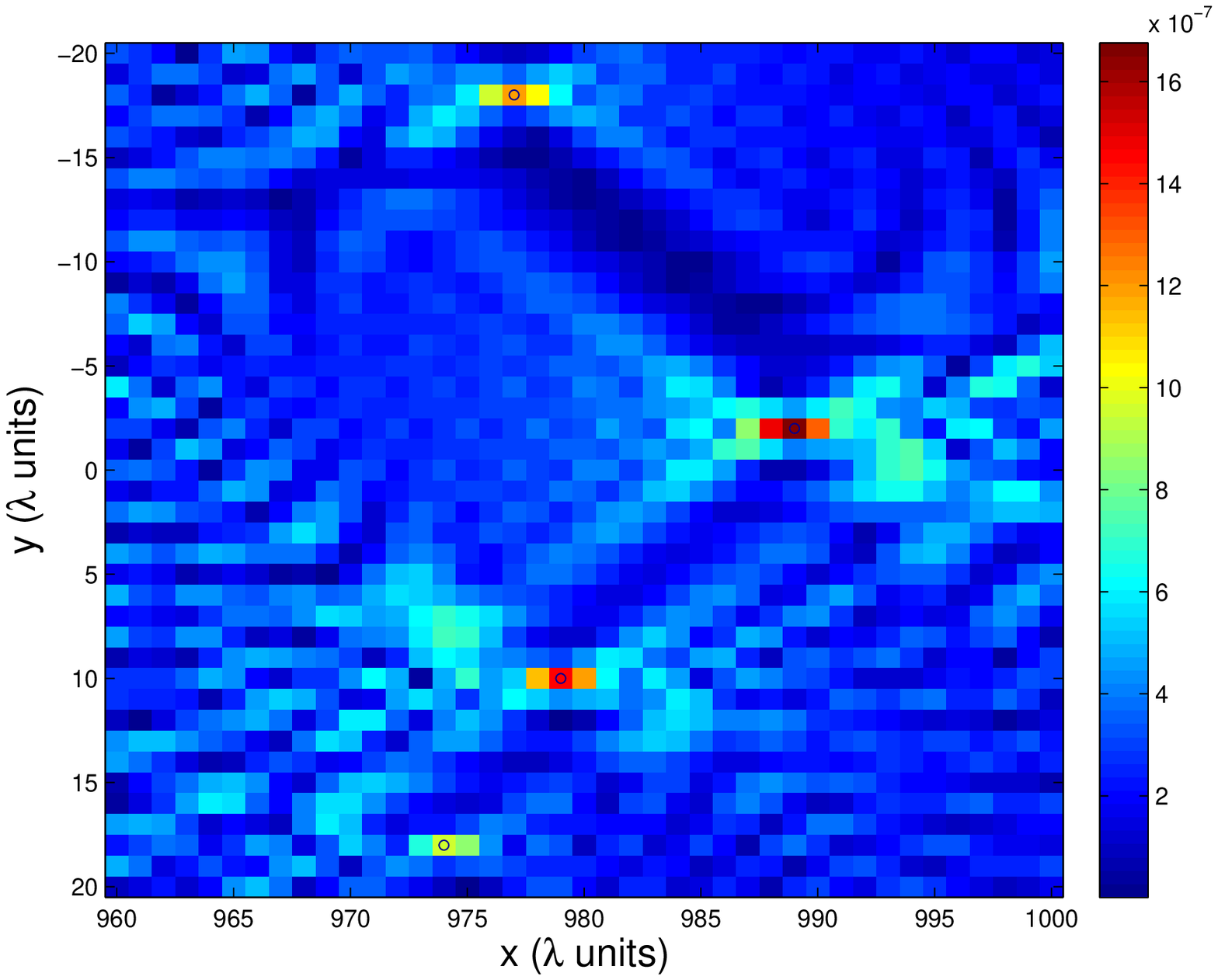}&
\includegraphics[scale=0.20]{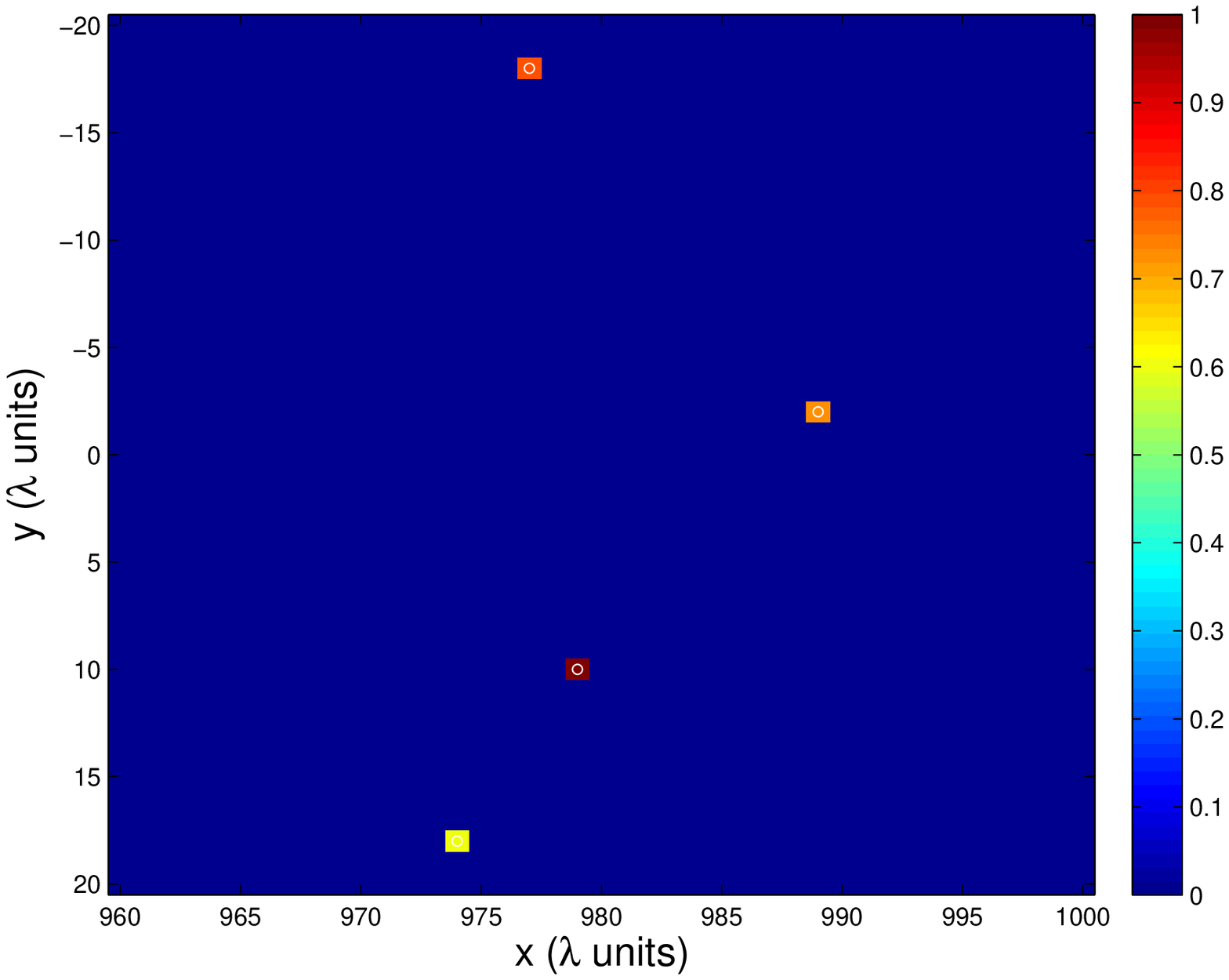}&
\includegraphics[scale=0.20]{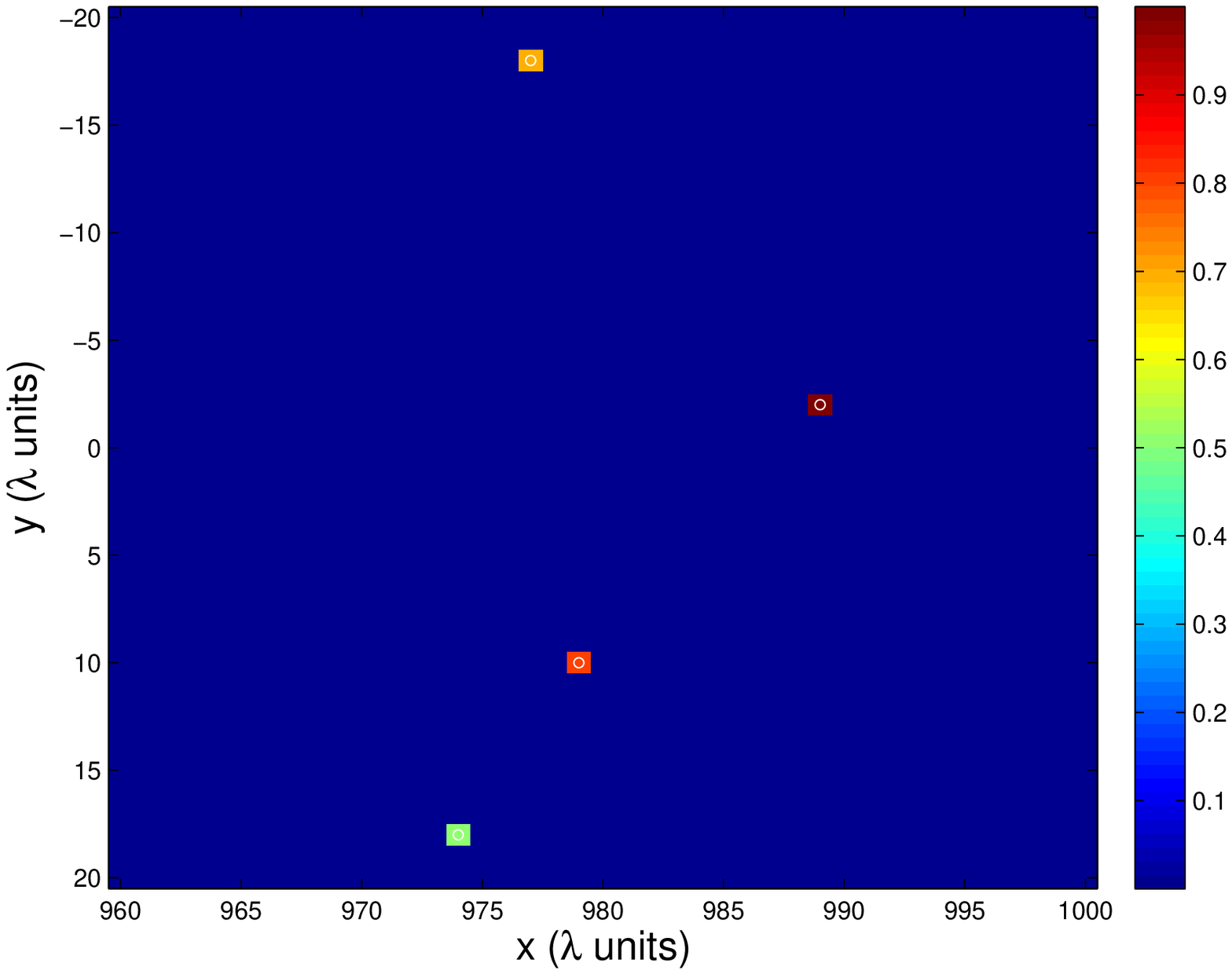}
\end{tabular}
\end{center}
\caption{Top row: small array. Bottom row: large array. KM (left column), MUSIC (center column) and hybrid-$\ell_1$ (right column) images in homogeneous media.
There is no additive noise in the data.}
\label{fig:homogeneous}
\end{figure}

The situation changes when there is correlated noise in the data because the signals propagate through a random medium with a complex
structure. This is illustrated in Figure~\ref{fig:smallarray}, where we show the images produced
by these imaging methods using a small array ($a= 25 l$). The three imaging methods show different behaviors though. Kirchhoff migration 
completely fails to image the scatterers, as can be seen in the top row the figure. There is not only degradation in the resolution,
but also loss of stability. Observe that the images obtained with KM are significantly different from one realization to another. 

The images obtained with MUSIC (middle row of Figure~\ref{fig:smallarray}) are also blurred compared to those obtained in a homogeneous medium. 
Furthermore, the images change from one realization of the random medium to another and, therefore, MUSIC is also unstable if the array size is small. 
The hybrid-$\ell_1$ method also produces images that change from one realization to another (bottom row of Figure~\ref{fig:smallarray}), 
but tries to keep a good resolution to provide a sparse solution. Observe that the detected scatterers dance along the cross-range direction around the true 
locations indicated in the figure with white dots. The images obtained with the hybrid-$\ell_1$ method also show some ghosts.

\begin{figure}[t]
\begin{center}
\begin{tabular}{cccc}
\includegraphics[scale=0.17]{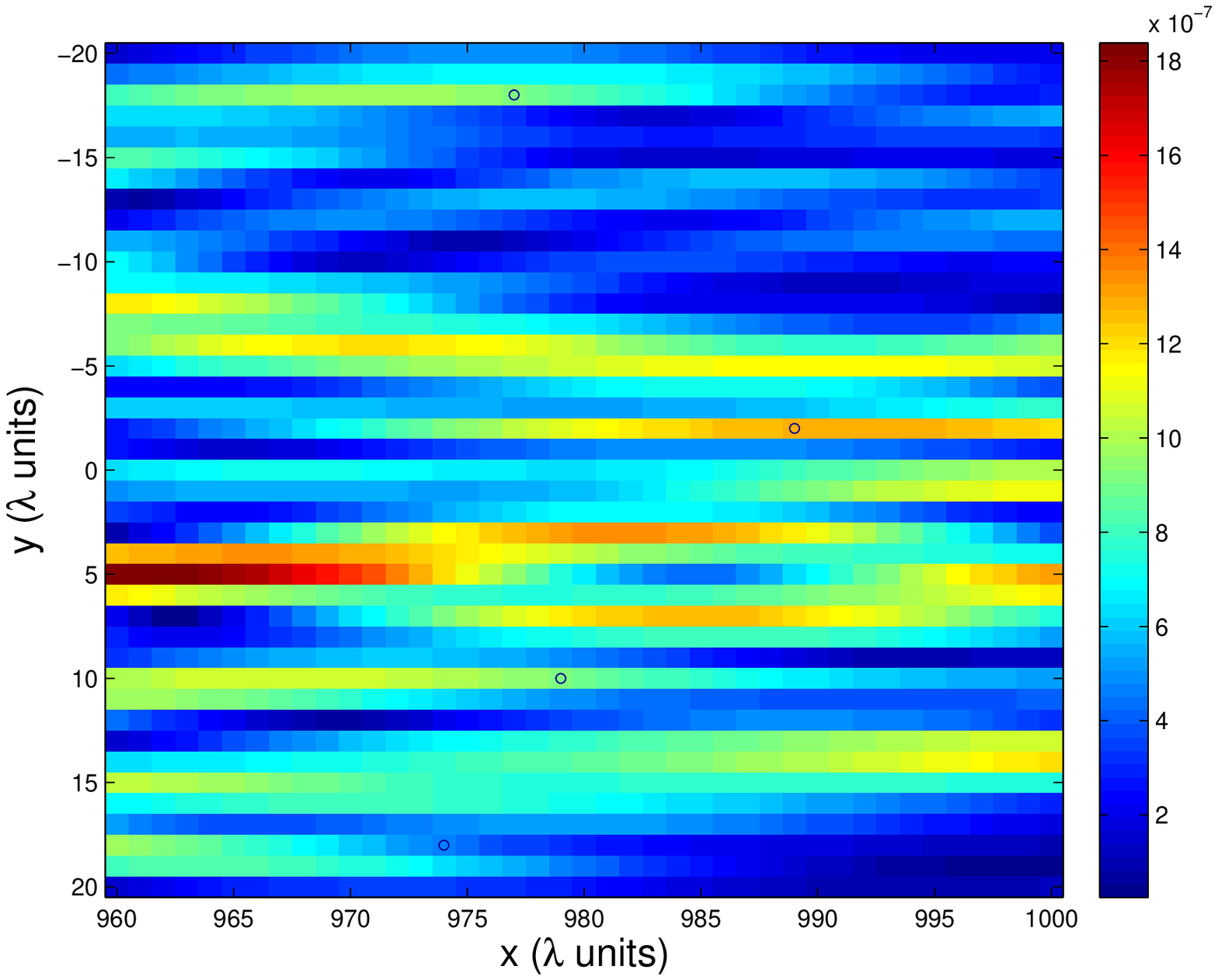}&
\includegraphics[scale=0.17]{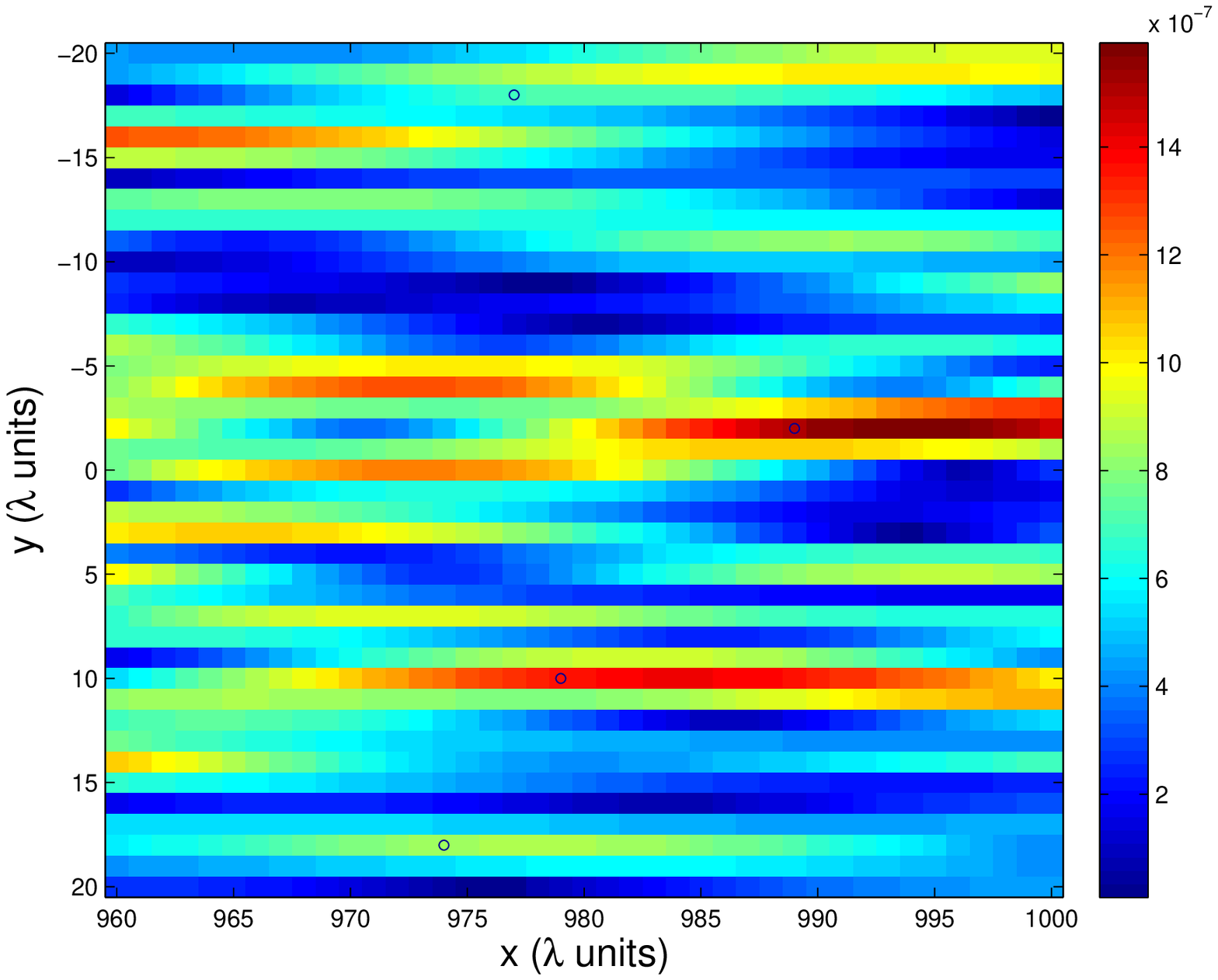}&
\includegraphics[scale=0.17]{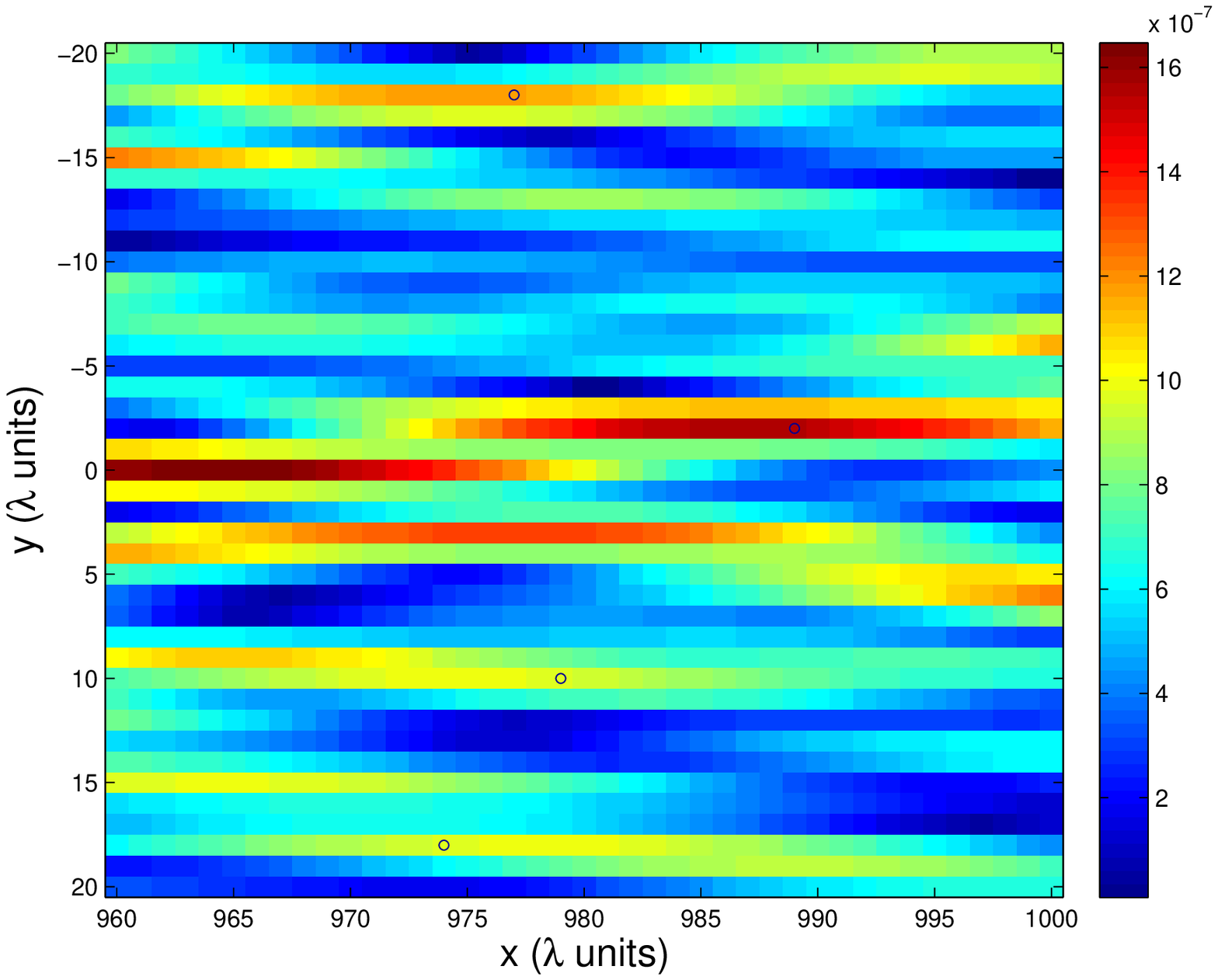}&
\includegraphics[scale=0.17]{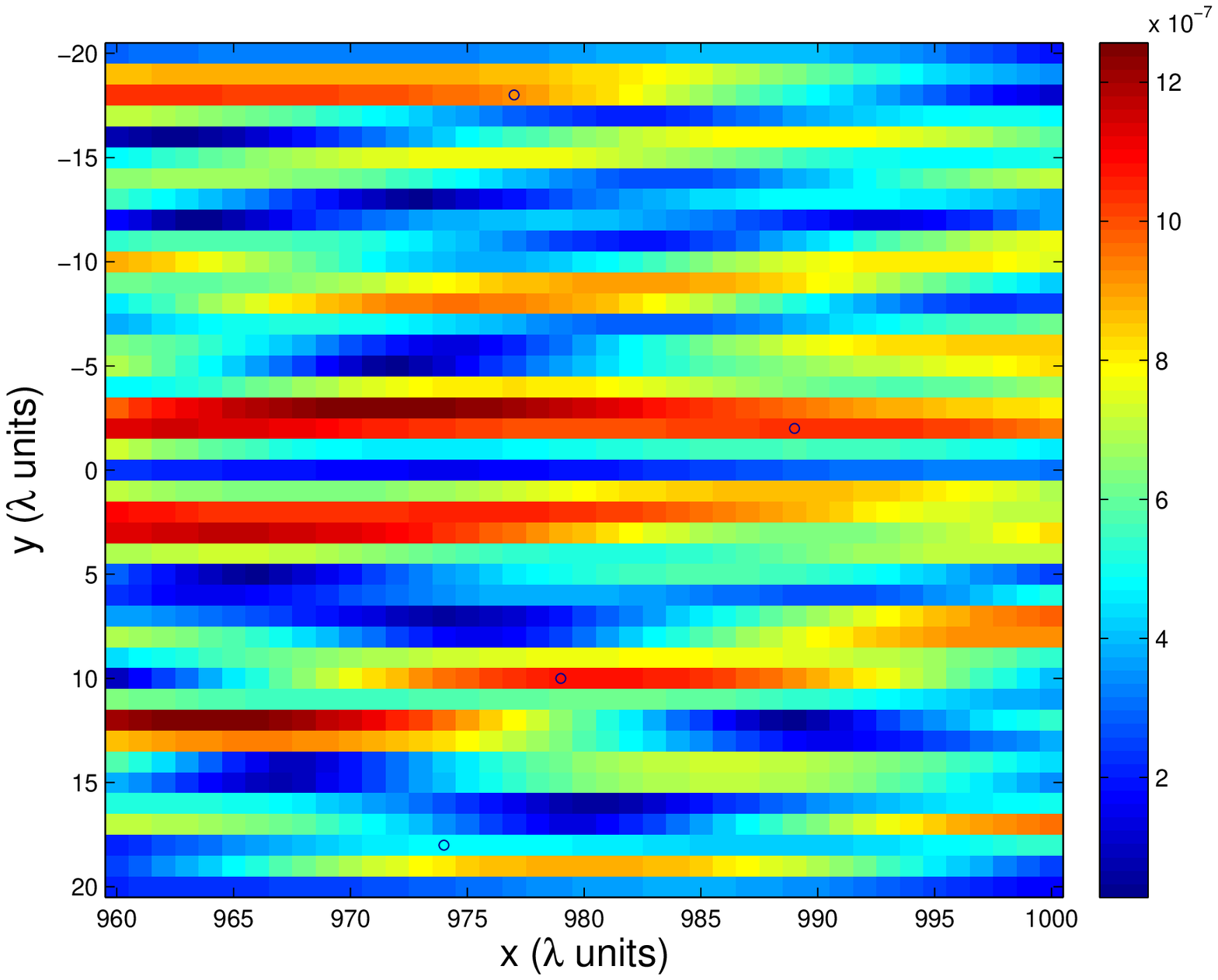}\\
\includegraphics[scale=0.17]{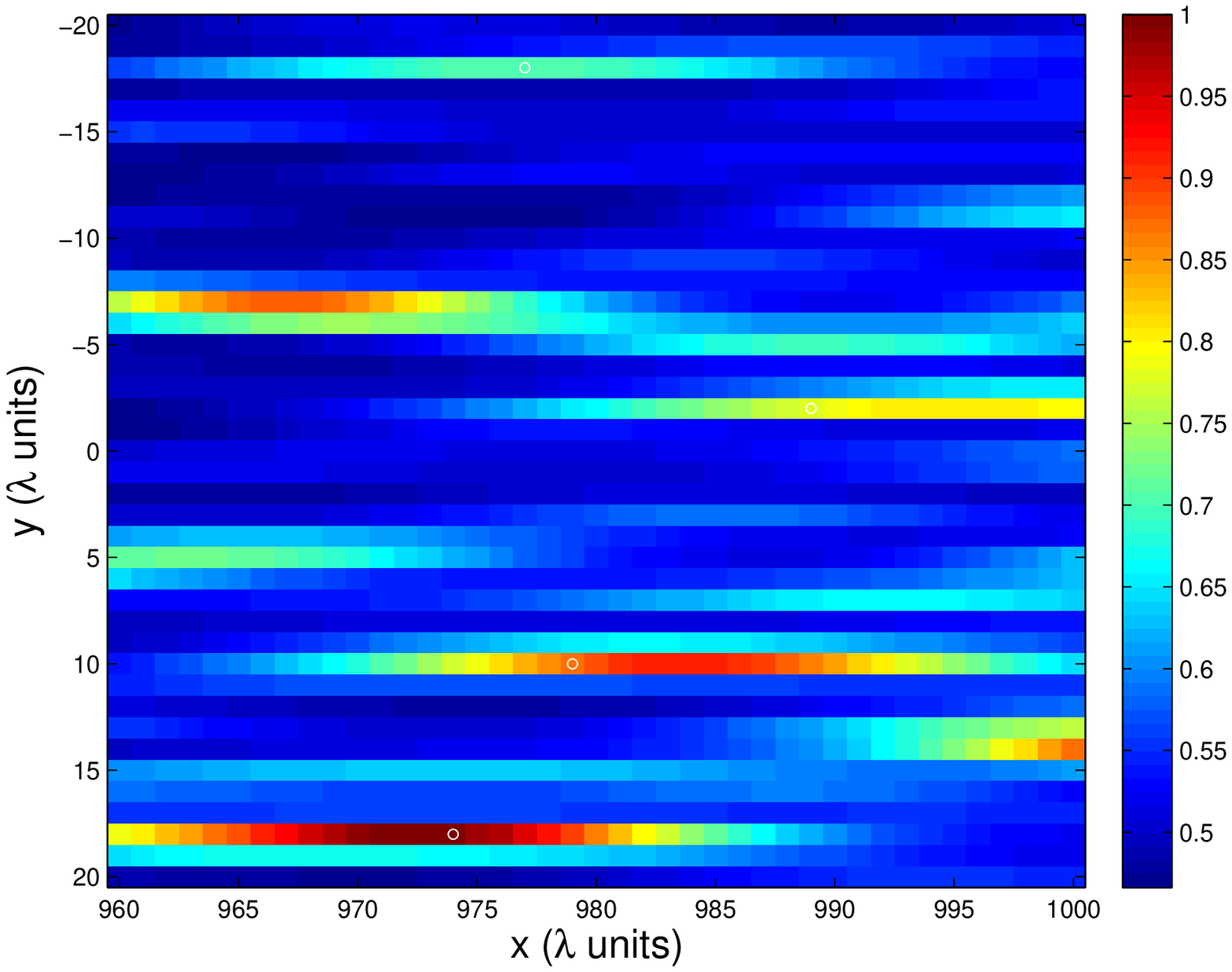}&
\includegraphics[scale=0.17]{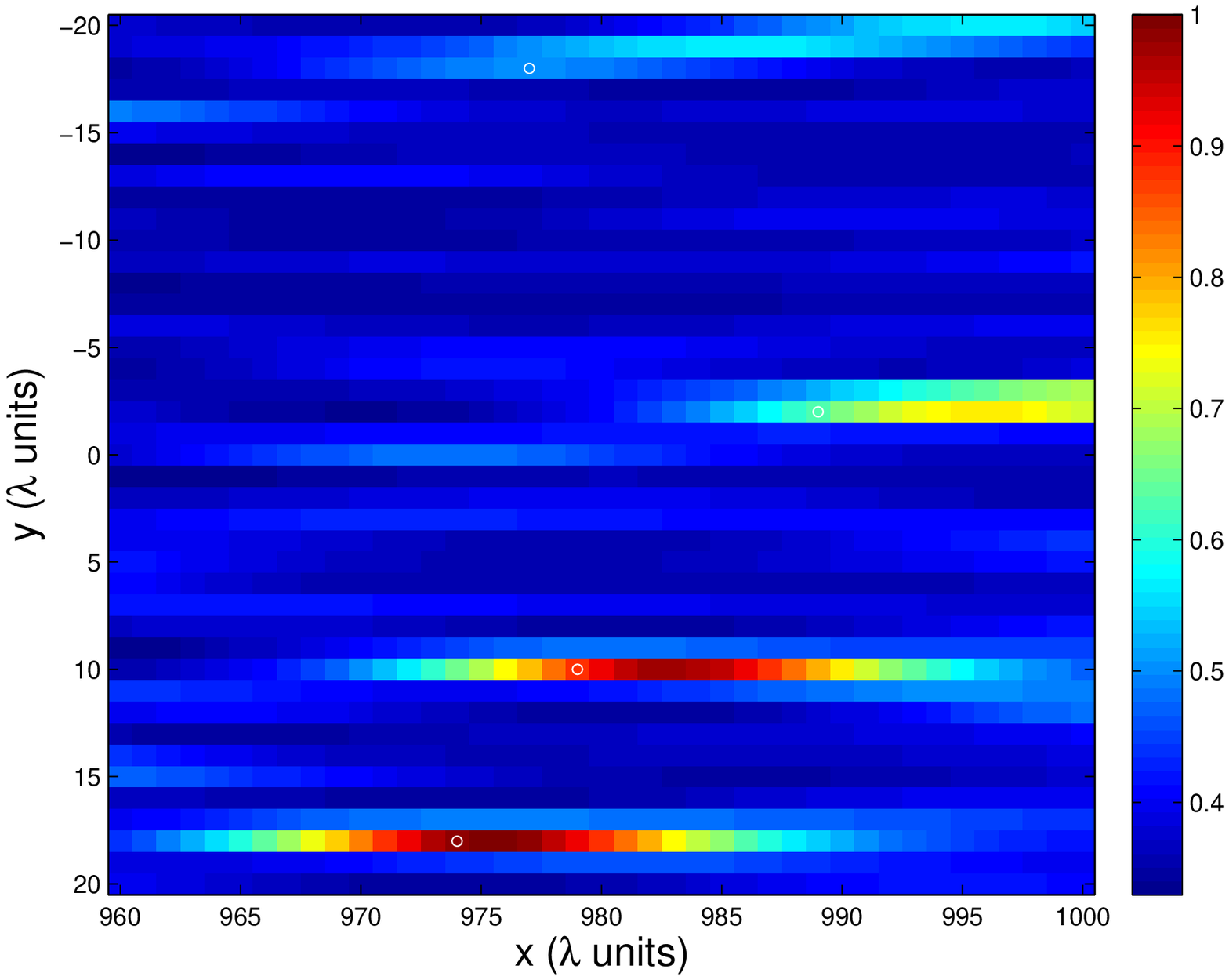}&
\includegraphics[scale=0.17]{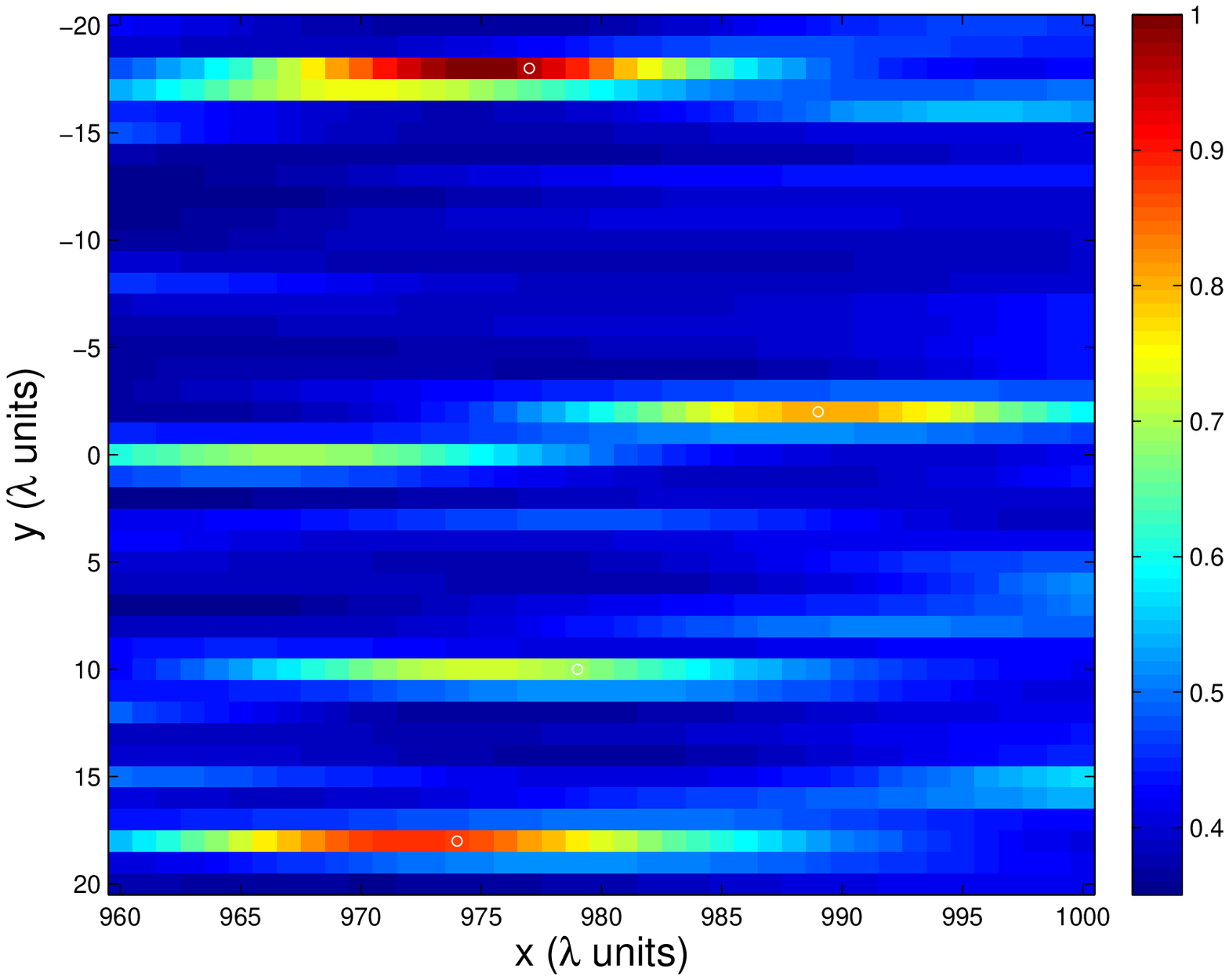}&
\includegraphics[scale=0.17]{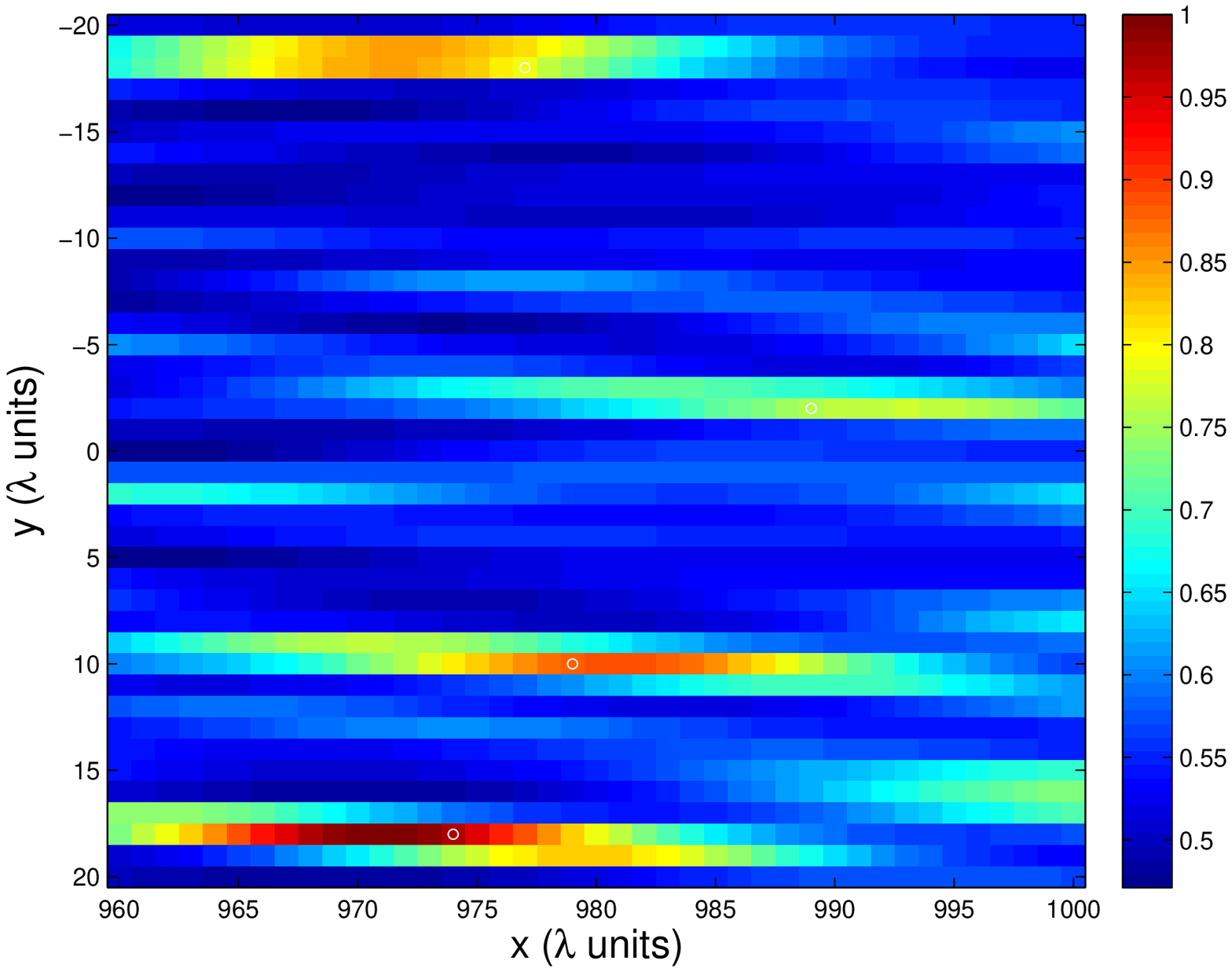}\\
\includegraphics[scale=0.17]{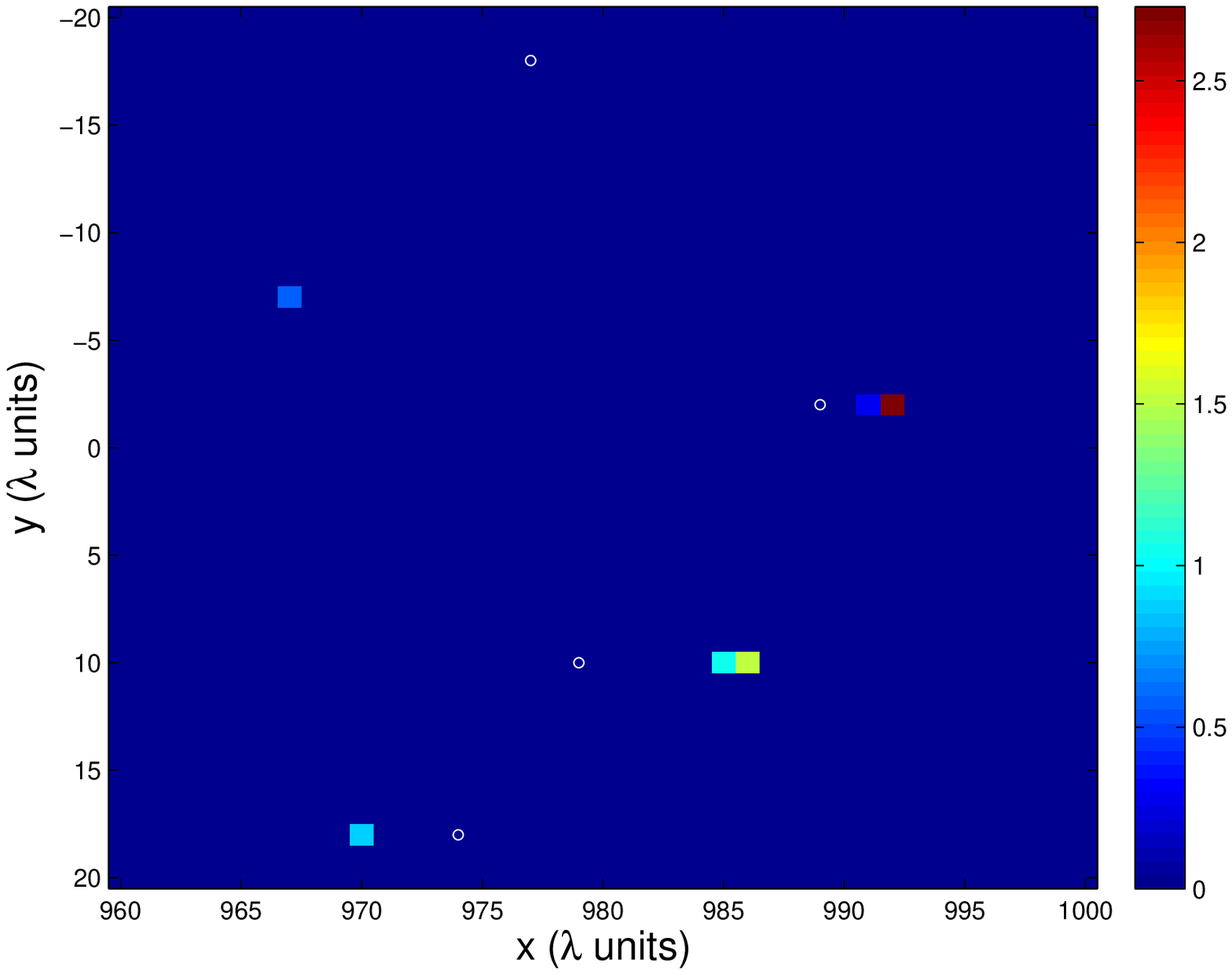}&
\includegraphics[scale=0.17]{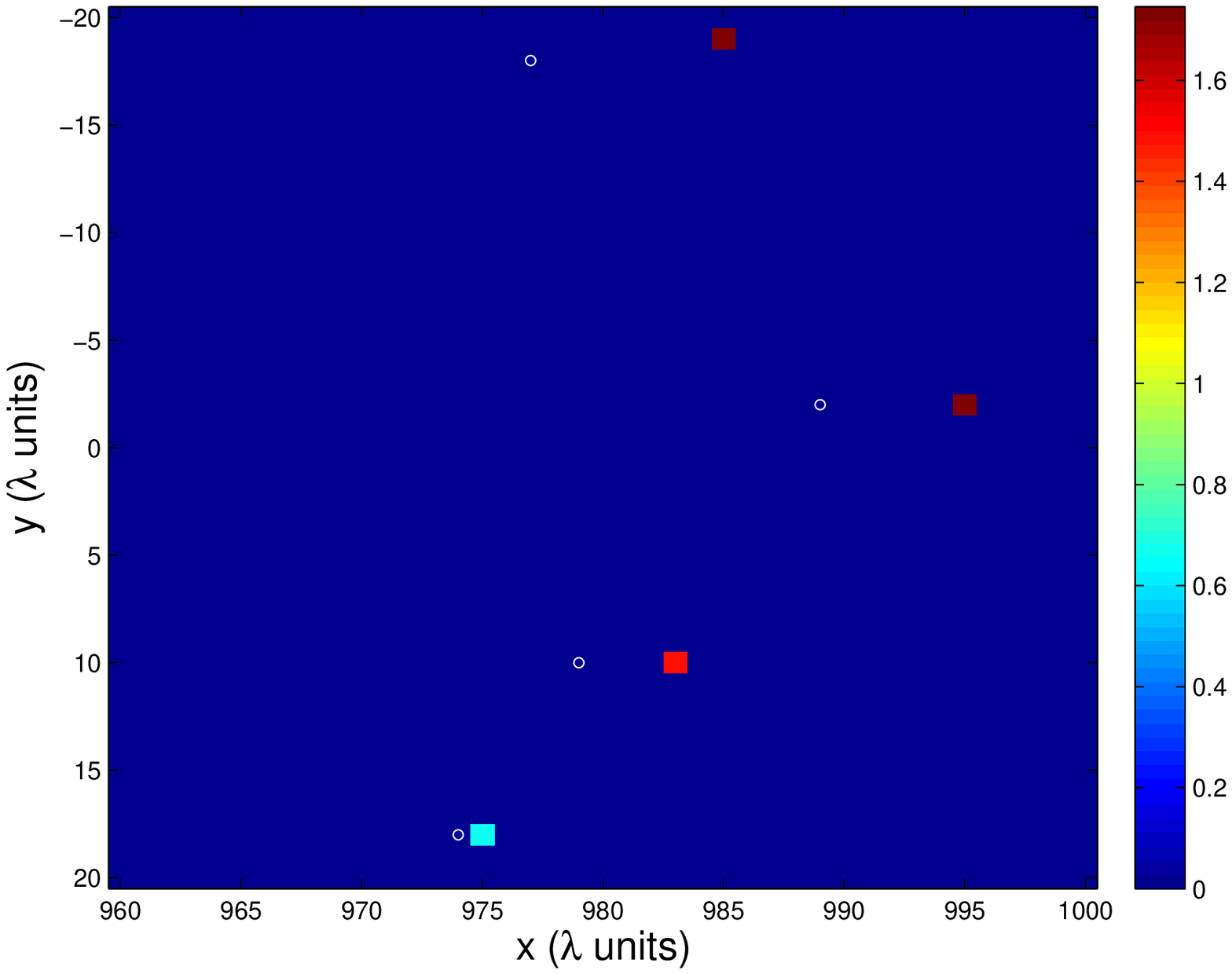}&
\includegraphics[scale=0.17]{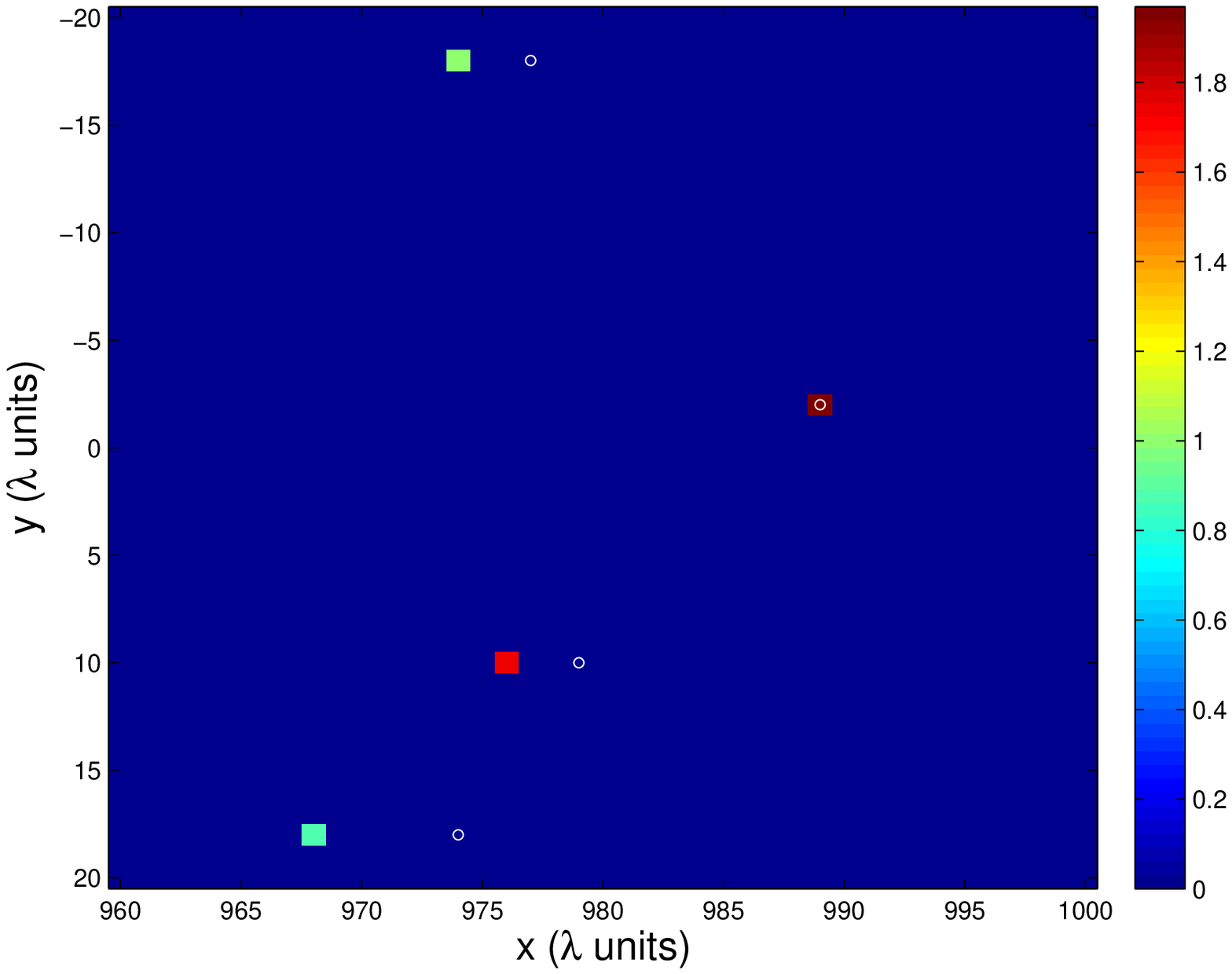}&
\includegraphics[scale=0.17]{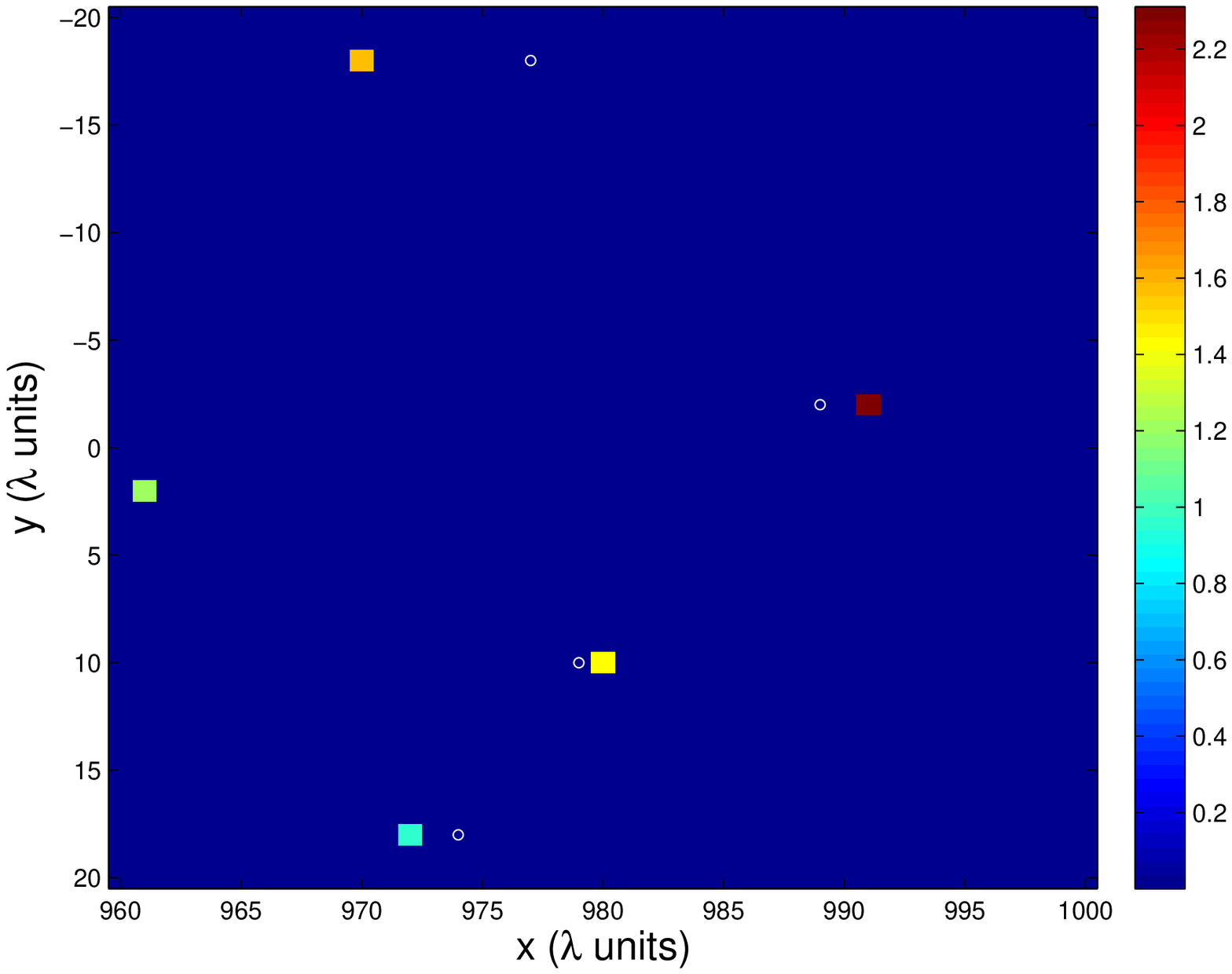}\\
\end{tabular}
\end{center}
\caption{{\bf Small array}. KM (top row) , MUSIC (center row) and hybrid-$\ell_1$ (bottom row) images in the same four realizations of a random medium.}
\label{fig:smallarray}
\end{figure}

These problems are overcome when the array is large ($a= 100 l$), see Figure~\ref{fig:largearray}.
As expected, the resolution of the images obtained by KM and MUSIC improve a lot.
However, KM still fails to image in random media as it produces clutter noise in the images 
from which it is hard or impossible to identify the location of the four scatterers.
On the other hand, MUSIC and the hybrid-$\ell_1$ method are able to recover the sparse solution.
We observe, though, that the performance of the hybrid-$\ell_1$ method is better.
The locations of the four scatterers are found exactly by this method. 

\begin{figure}[t]
\begin{center}
\begin{tabular}{cccc}
\includegraphics[scale=0.17]{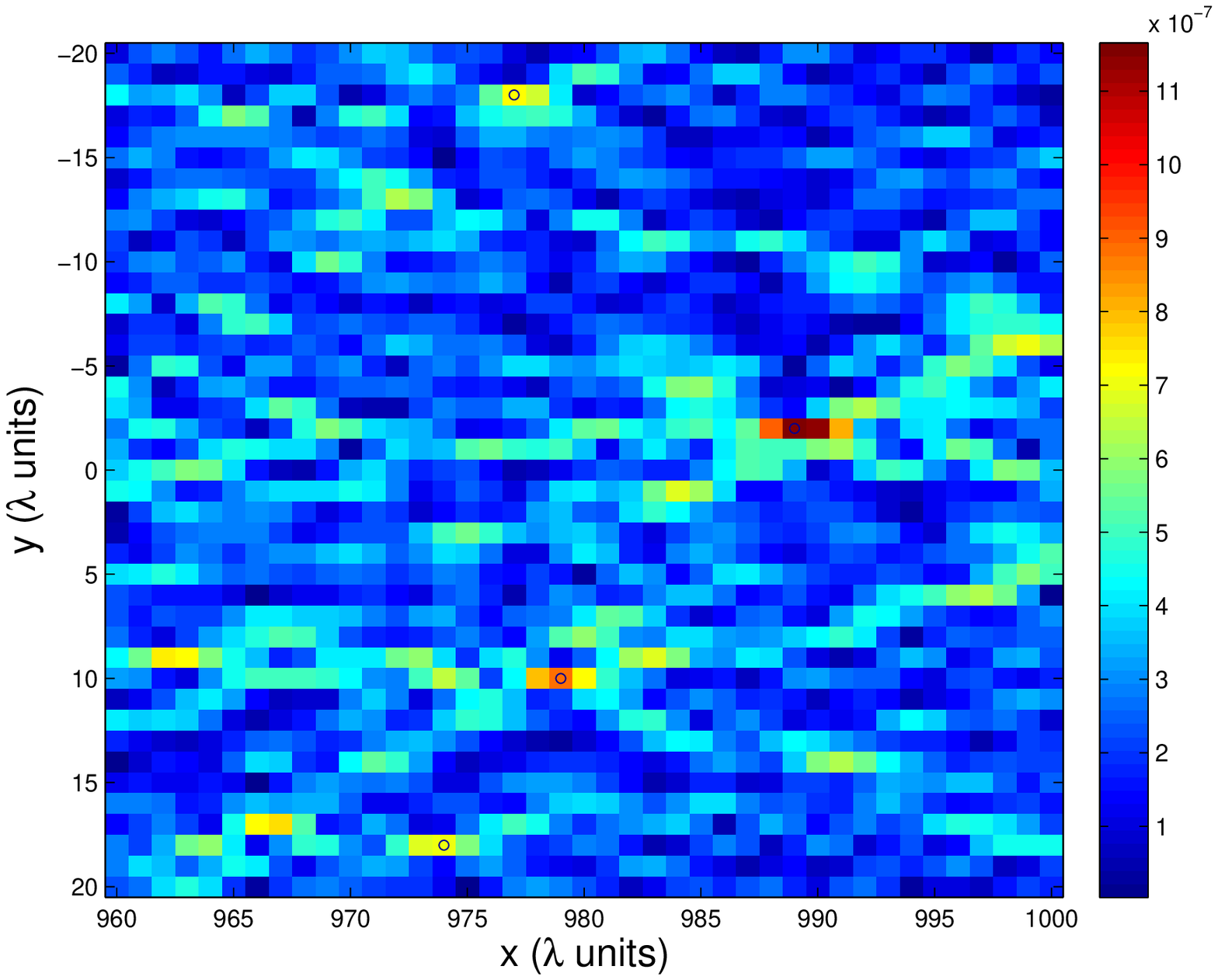}&
\includegraphics[scale=0.17]{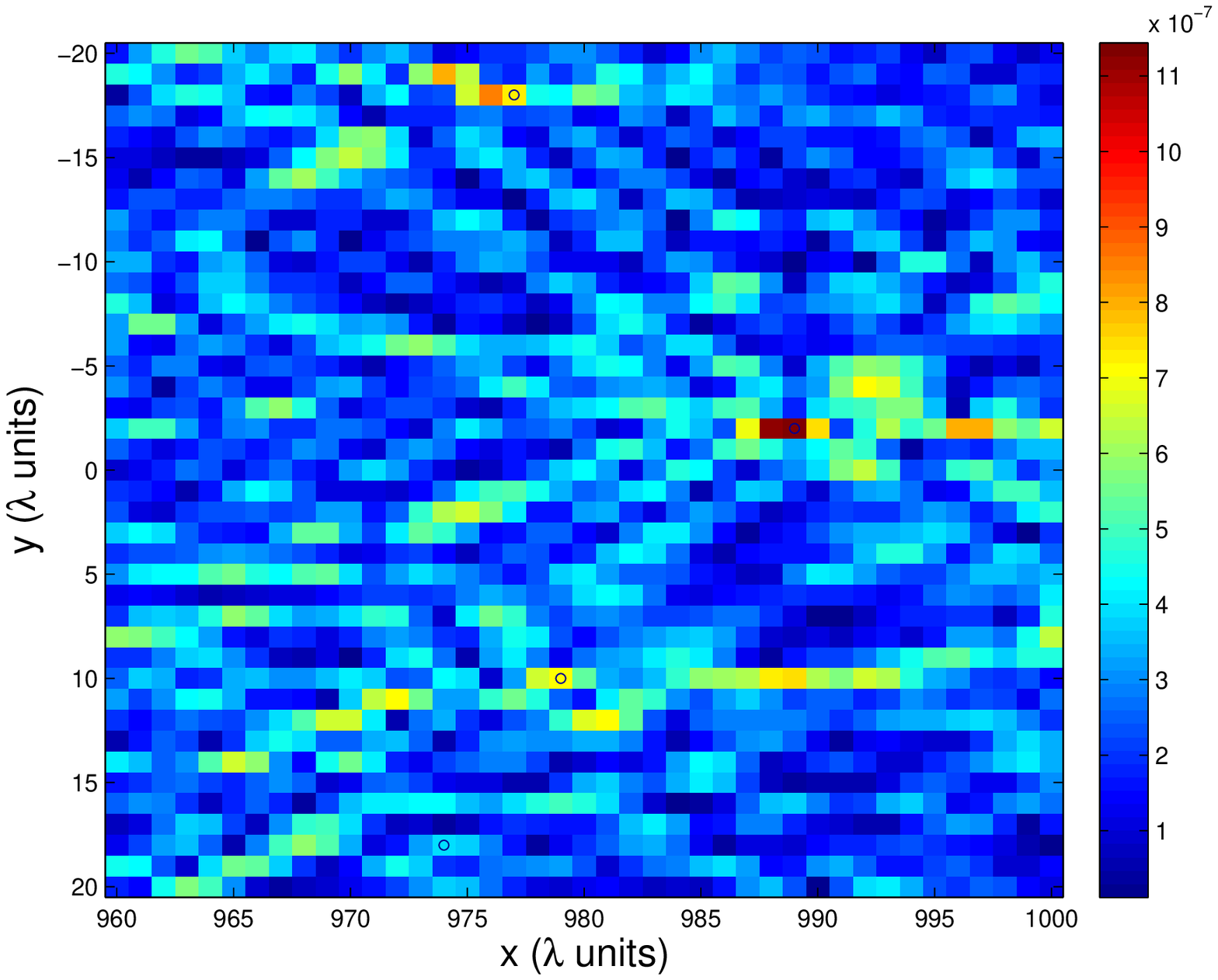}&
\includegraphics[scale=0.17]{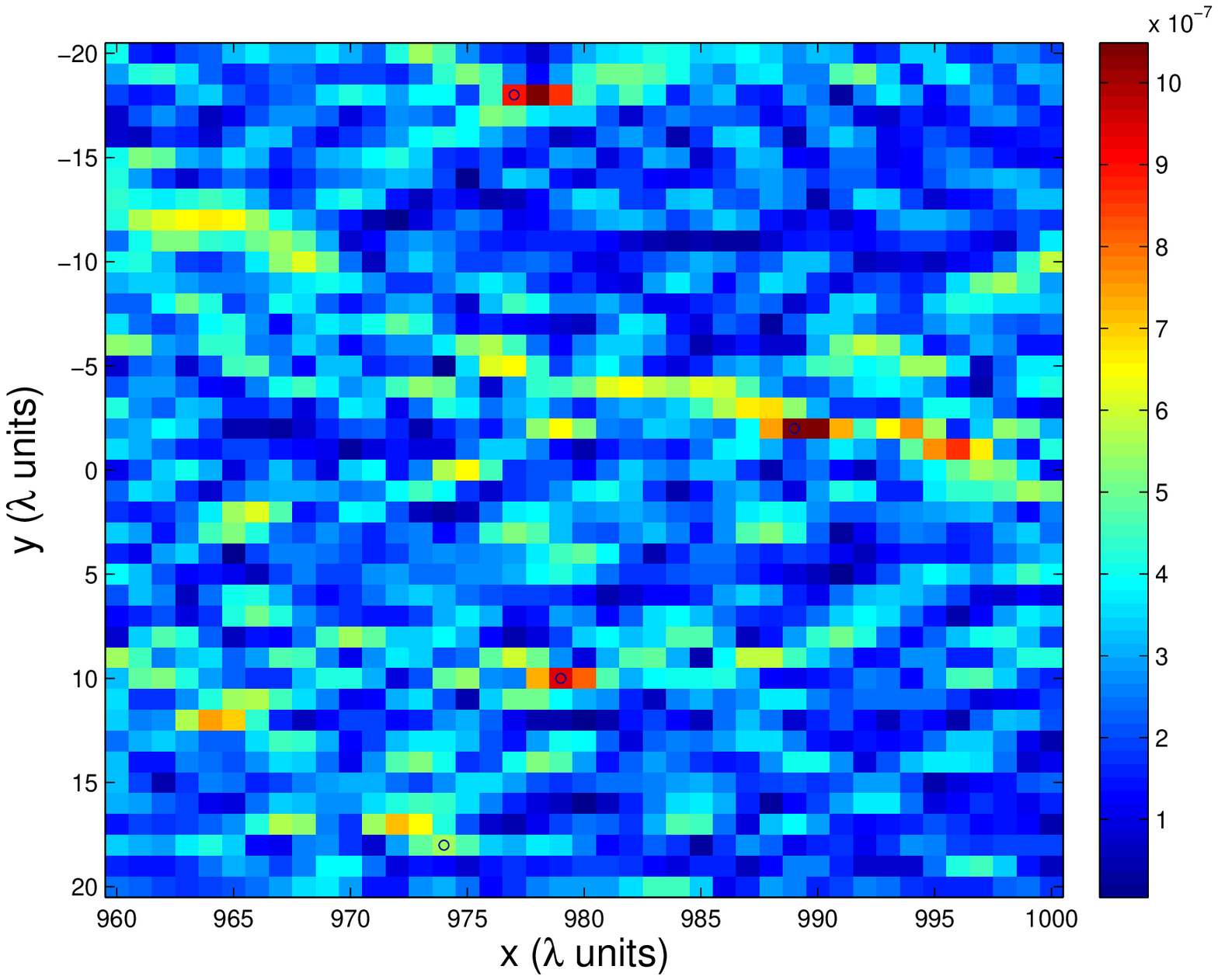}&
\includegraphics[scale=0.17]{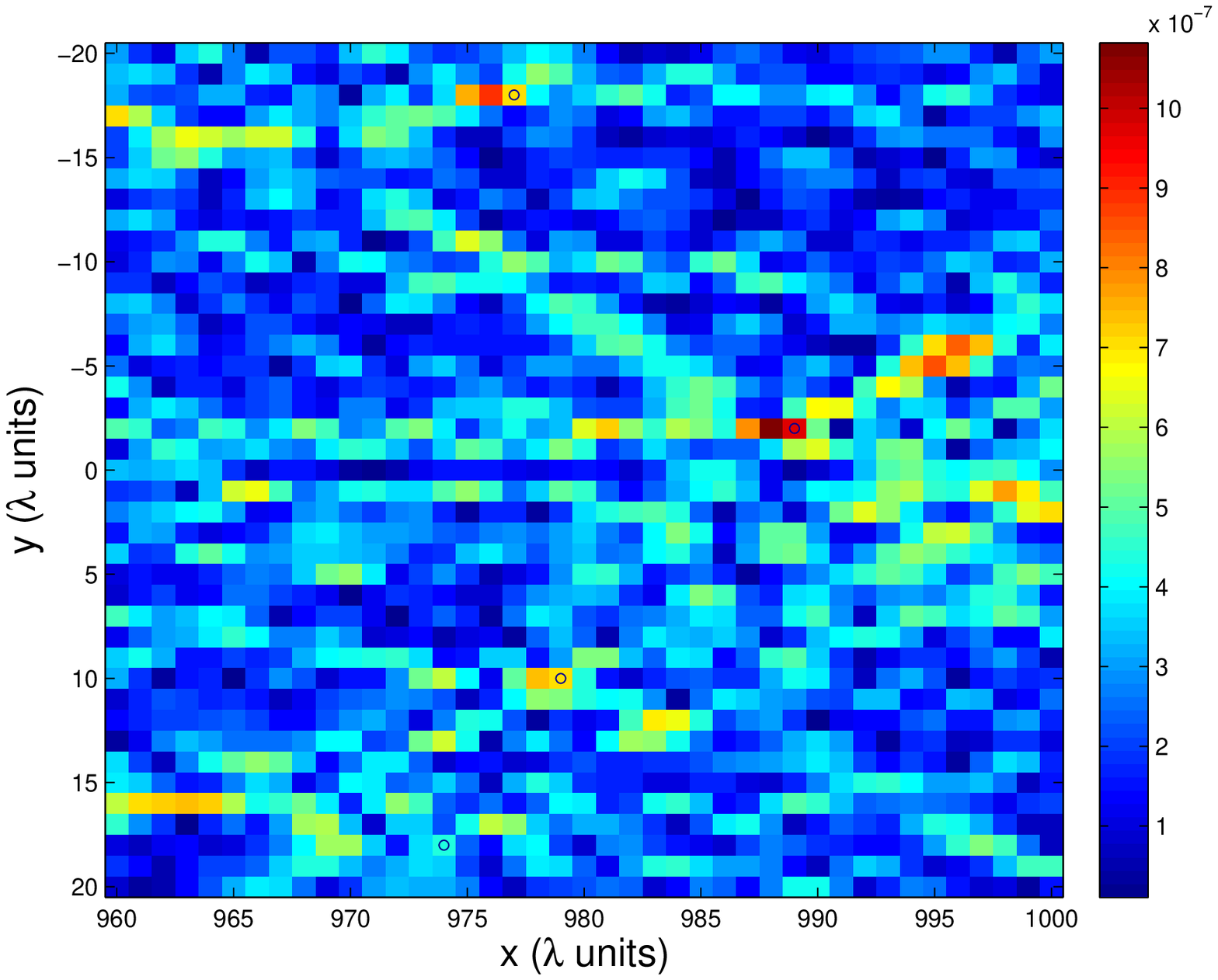}\\
\includegraphics[scale=0.17]{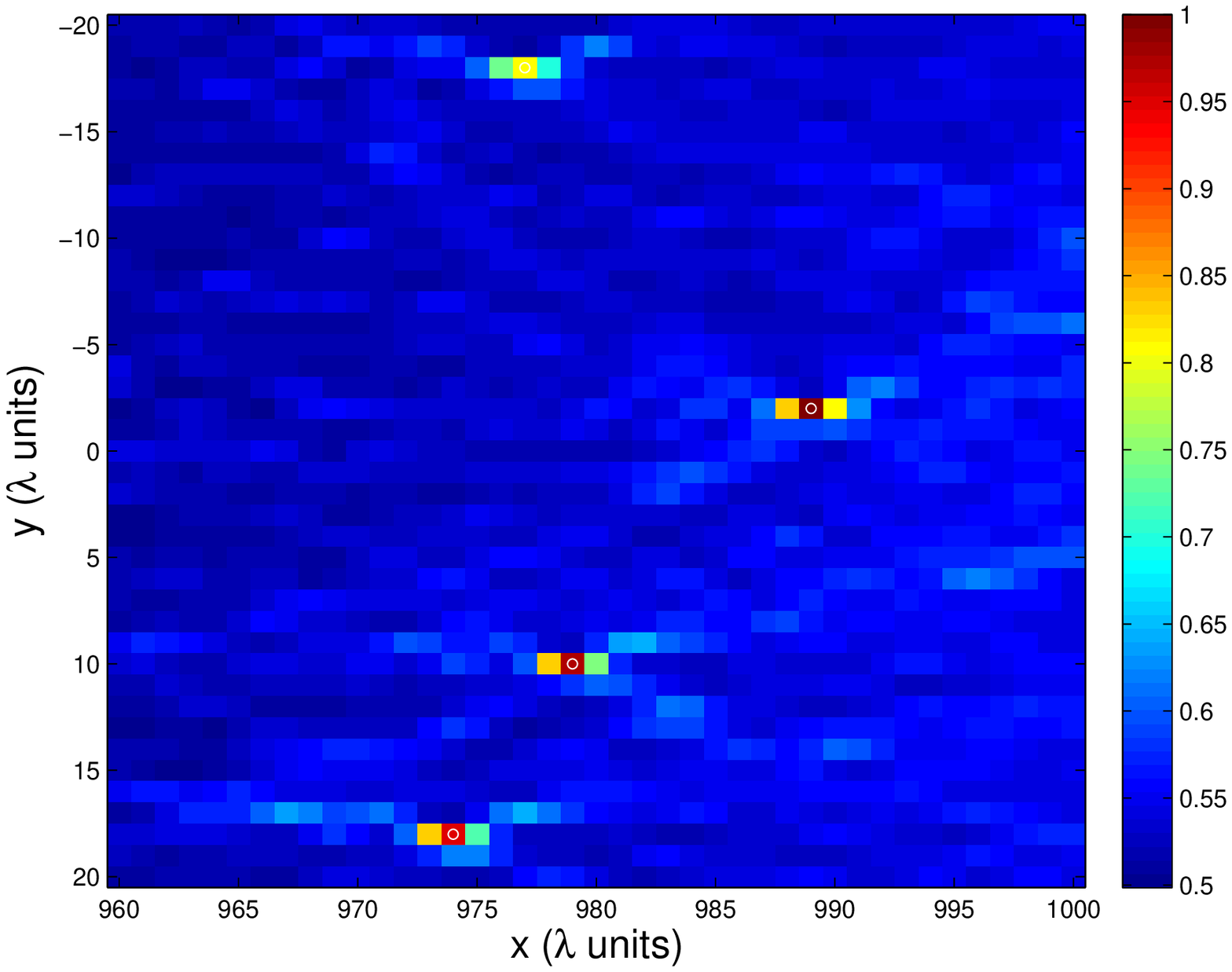}&
\includegraphics[scale=0.17]{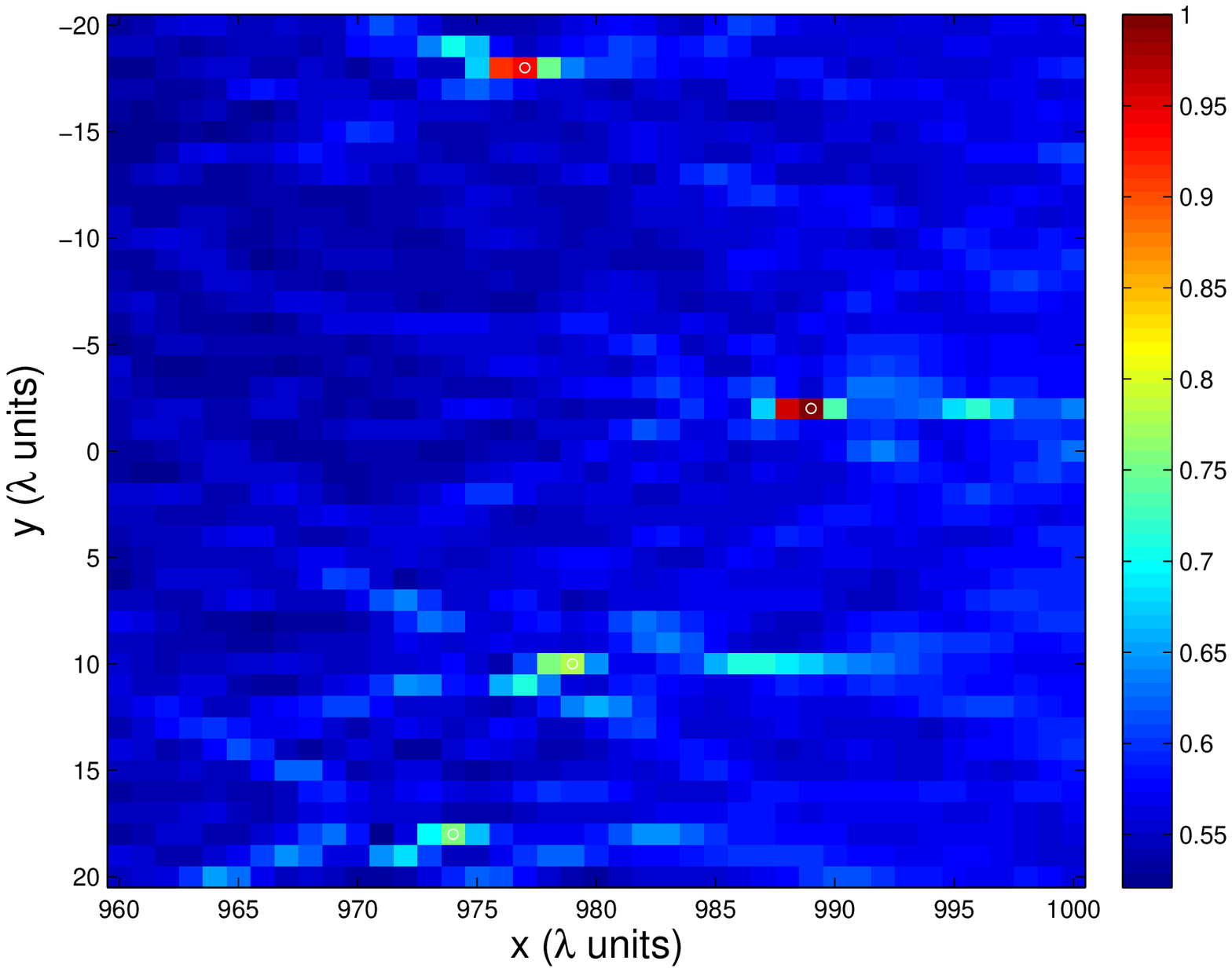}&
\includegraphics[scale=0.17]{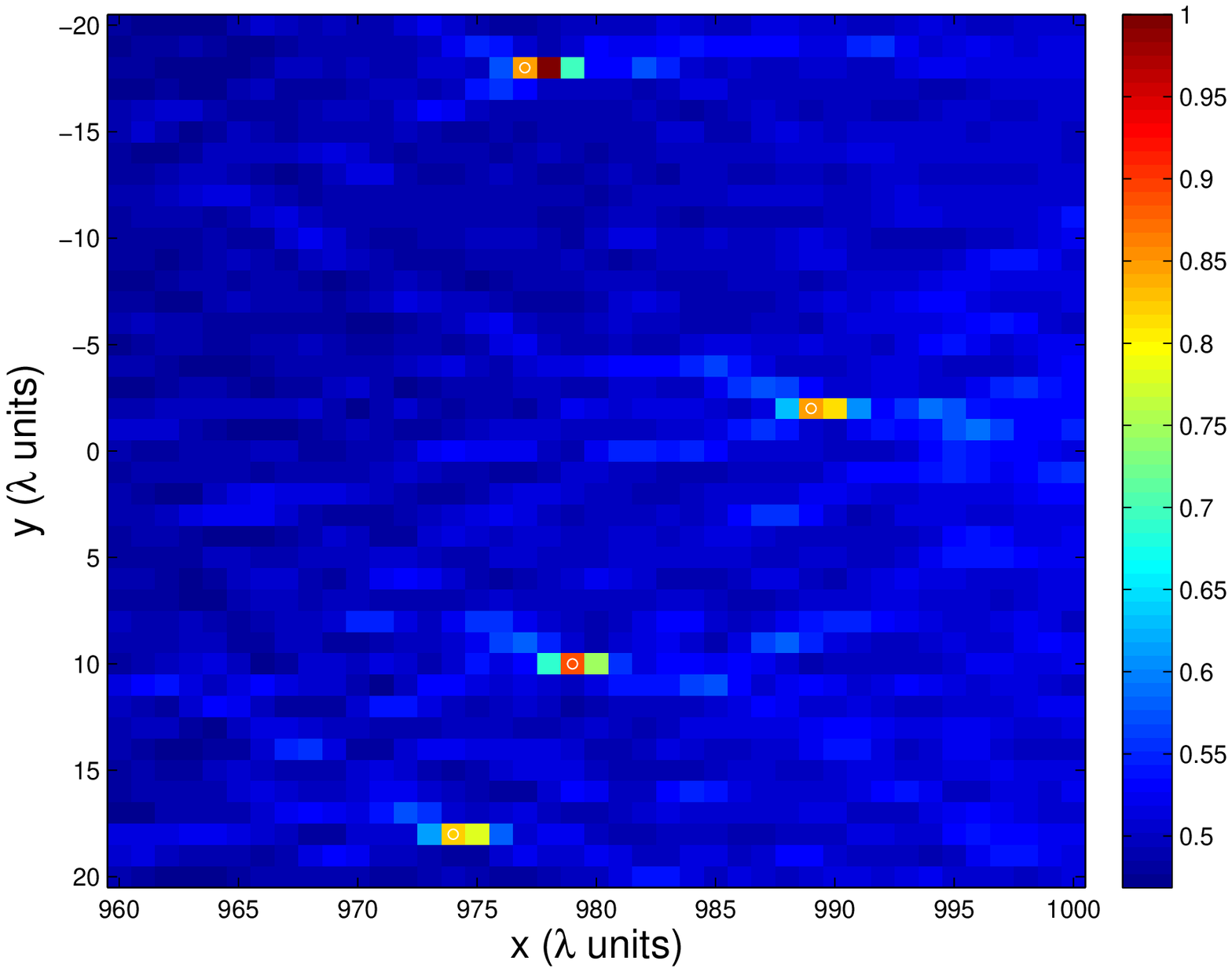}&
\includegraphics[scale=0.17]{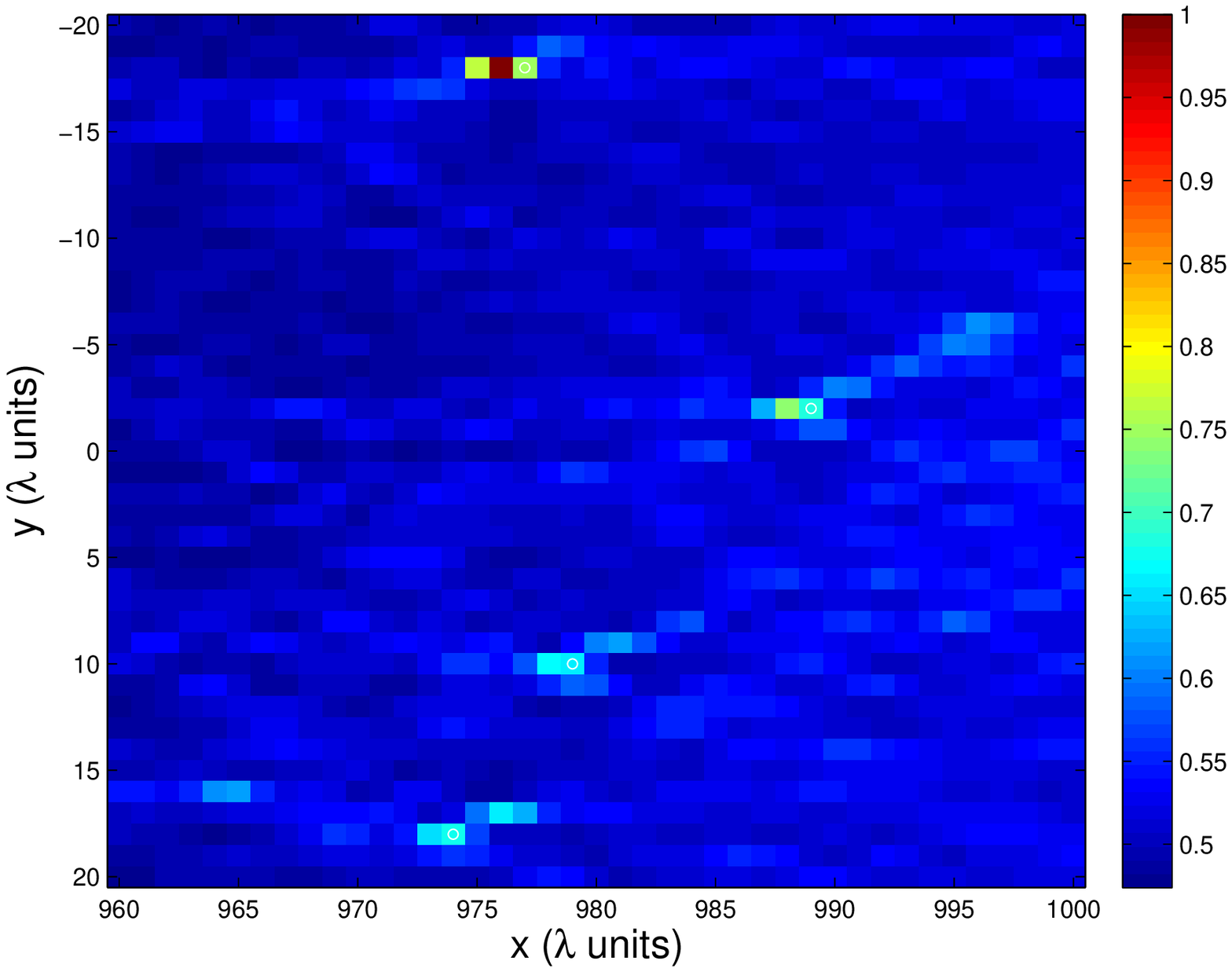}\\
\includegraphics[scale=0.17]{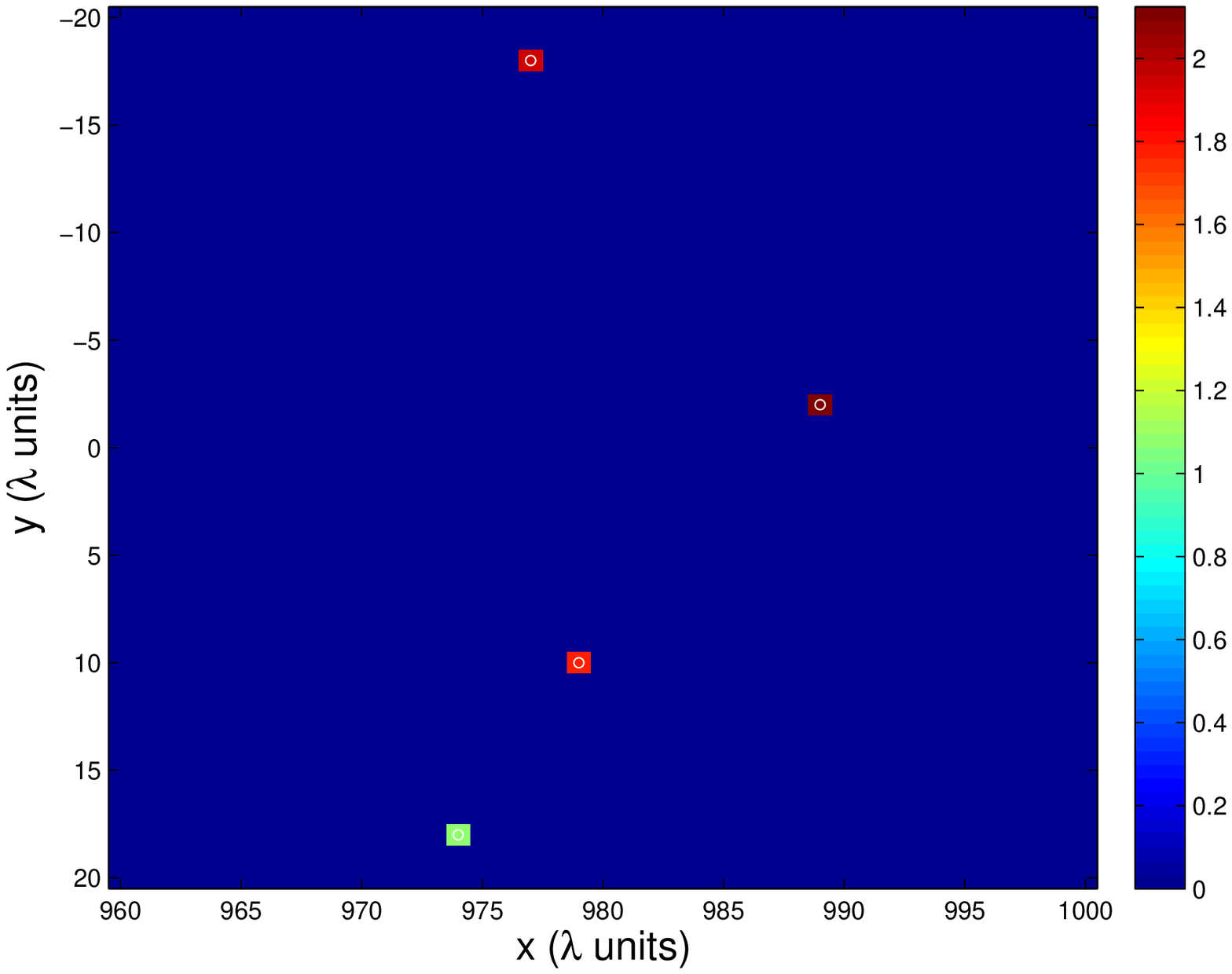}&
\includegraphics[scale=0.17]{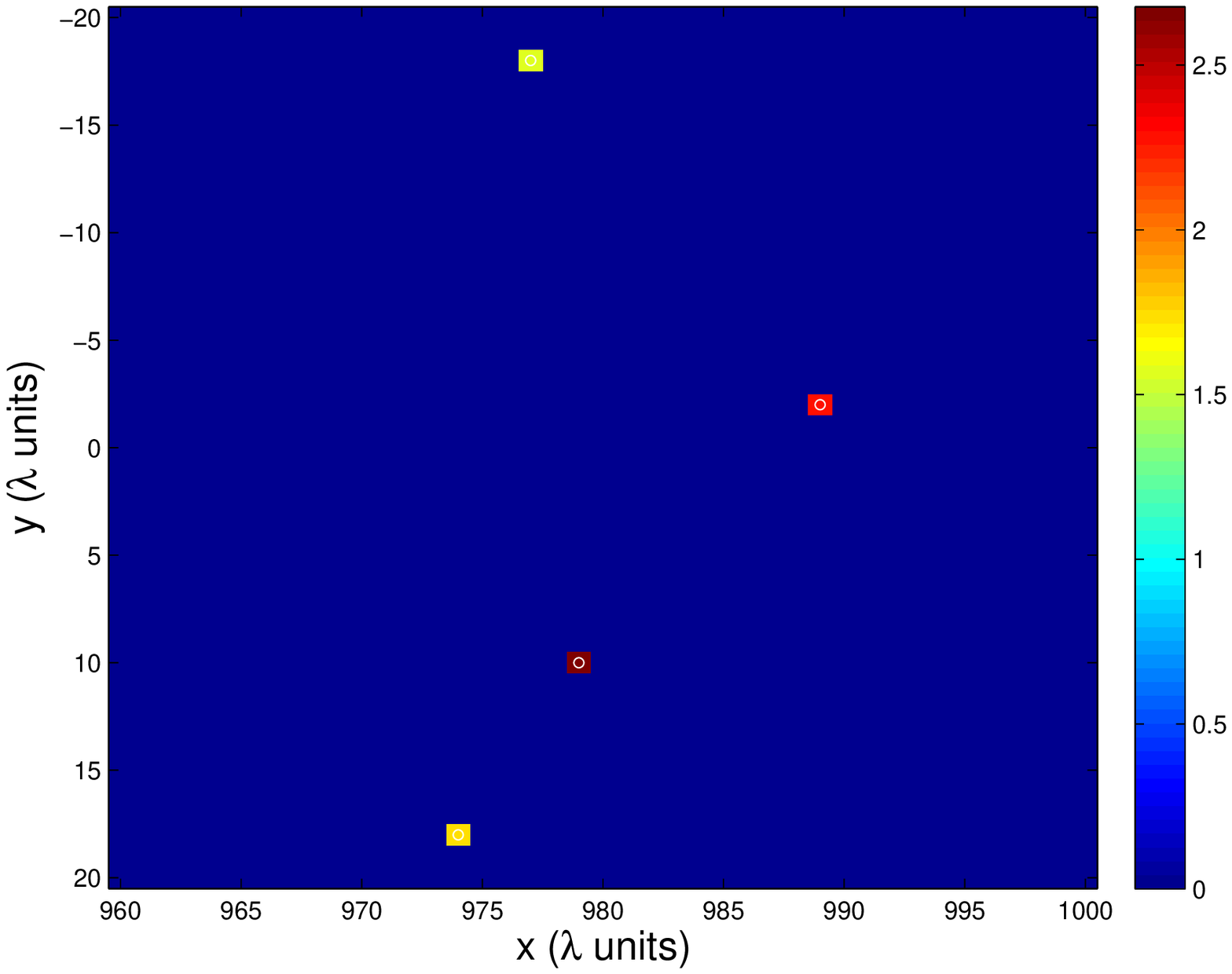}&
\includegraphics[scale=0.17]{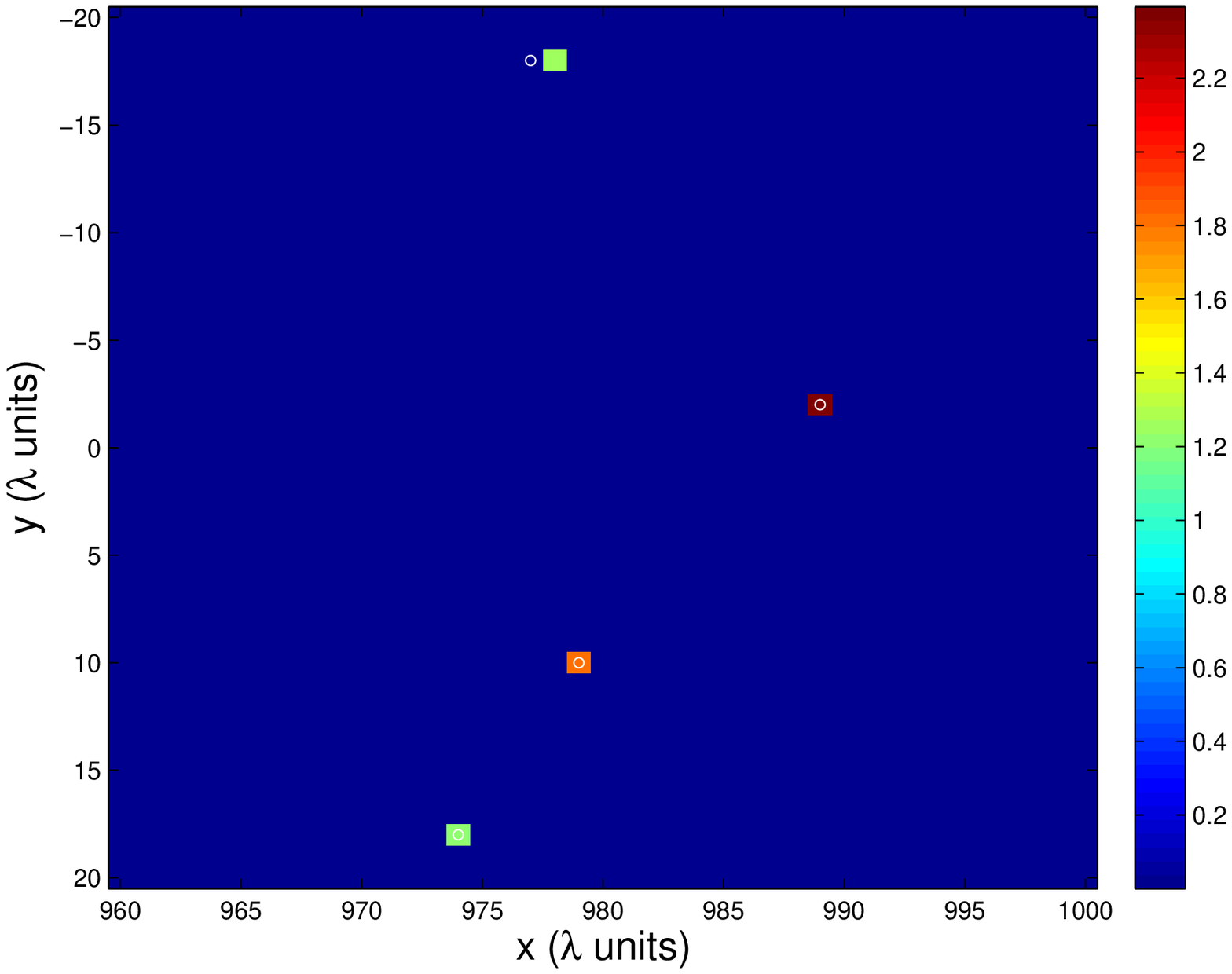}&
\includegraphics[scale=0.17]{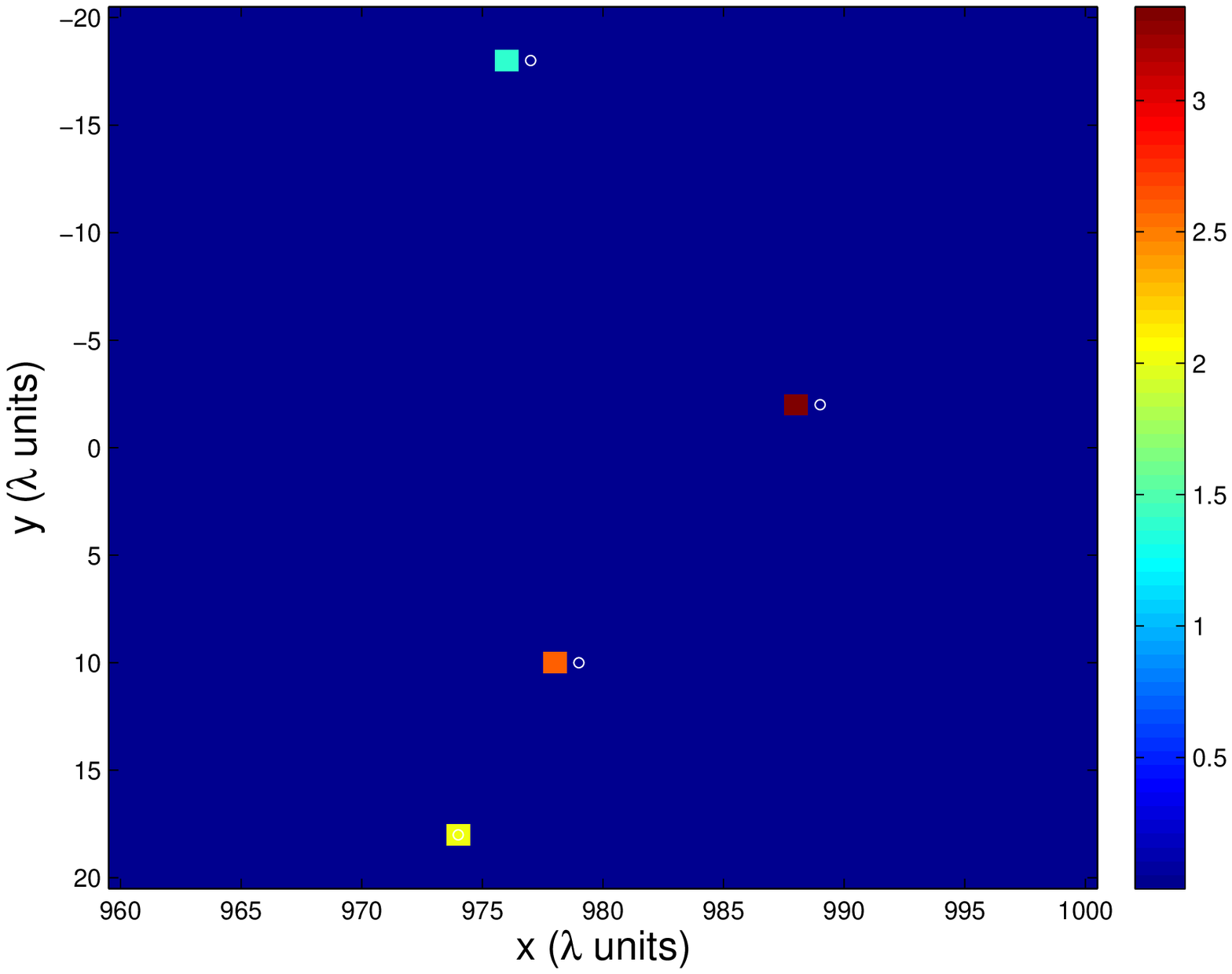}\\
\end{tabular}
\end{center}
\caption{{\bf Large array}. KM (top row) , MUSIC (center row) and hybrid-$\ell_1$ (bottom row) images in the same four realizations of a random medium.}
\label{fig:largearray}
\end{figure}

\section{Conclusions} \label{sec:conclusions}
We present a comprehensive study of optimization based methods applied to narrow band array imaging of localized scatterers. 
We have considered homogeneous and heterogeneous media. When the media is homogeneous but multiple scattering between the scatterers
is important, we give a non-iterative formulation of the nonlinear inverse problem that allows us to determine the locations and
reflectivities of the scatterers non-iteratively using sparsity promoting optimization. We also propose to apply optimal illuminations
to improve the robustness of the imaging methods and the resolution of the images. When multiple scattering is negligible,
the optimization problem becomes linear. In this case, our formulation can be reduced to a hybrid-$\ell_1$ method that uses
the optimal illuminations and $\ell_1$ minimization. This method reduces the dimensionality of the problem, 
filters out the noise, and keeps all the essential properties of $\ell_1$ minimization.

When the media is random, we study the important concept of statistical stability which relates to the robustness
of the imaging methods with respect to different realizations of the random media. Provided the imaging array is
large enough, we show that the hybrid-$\ell_1$ method gives very accurate results and is statistically stable. 

We illustrate the theoretical results with various numerical examples and compared the performance of the proposed
optimization based methods to the widely used Kirchhoff migration and the MUSIC methods.

\section*{Acknowledgments}
AC's work was partially supported by a Hewlett Packard Stanford Graduate Fellowship.
MM's work was partially supported by the Spanish MICINN grant FIS2013-41802-R.
GP's work was partially supported by AFOSR grant FA9550-14-1-0275.

\appendix
\section*{Appendix}
\renewcommand\thesection{\Alph{section}}
\section{Foldy-Lax model} \label{appendix:FLmodel}
Multiple scattering between point-like scatterers is modeled by means of the
Foldy-Lax equations \cite{F45,L51,L52}. Under this  framework,
the scattered wave received at transducer $\vect x_r$ due to a narrow band
signal of angular frequency $\omega$ sent from $\vect x_s$ can be written as the
superposition of all scattered waves from the $M$ scatterers at $\{\vect y_{n_1},\ldots,\vect y_{n_M}\}$ (see Fig. \ref{fig:schematic} (c)), so
\begin{equation}\label{eq:Foldy-Lax}
    \wP(\vect x_r,\vect x_s)=\sum_{j=1}^M\hxi_j^s(\vect x_r; \vect y_{n_1},\ldots,\vect y_{n_M}).
\end{equation}
Here, $\hxi_j^s(\vect x_r;\vect y_{n_1},\ldots,\vect y_{n_M})$
represents the scattered wave observed at $\vect x_r$ due to the emaniting wave from the scatterer at position $\vect y_{n_j}$.
It depends on the positions of all the scatterers and is given by
\begin{equation}\label{eq:scattered-field}
    \hxi_j^s(\vect x_r;\vect y_{n_1},\ldots,\vect y_{n_M})=\alpha_j\wG(\vect x_r,\vect y_{n_j})\hxi_j^e(\vect y_{n_1},\ldots,\vect y_{n_M}),
\end{equation}
where $\hxi_j^e(\vect y_{n_1},\ldots,\vect y_{n_M})$ represents the exciting
field at the scatterer located at $\vect y_{n_j}$. Ignoring the self-interacting
fields, the exciting field at $\vect y_{n_j}$ is equal to the sum of the incident
field $\hxi^{inc}_j:=\hxi^{inc}(\vect y_{n_j},\vect x_s)$ at $\vect y_{n_j}$ and
the scattered fields at $\vect y_{n_j}$ due to all scatterers except for the one
at $\vect y_{n_j}$. Hence, it is given by
\begin{equation}\label{eq:exciting-field}
    \fl\hxi_j^e(\vect y_{n_1},\ldots,\vect y_{n_M})=\hxi^{inc}(\vect y_{n_j},\vect x_s)+
    \sum_{k\neq j}\alpha_k\wG(\vect y_{n_j},\vect y_{n_k})\hxi_k^e(\vect y_{n_1},\ldots,\vect y_{n_M}),
\end{equation}
for $j=1,2,\ldots,M$. This is a self-consistent system of $M$ equations for the
$M$ unknown exciting fields
\[
    \hxi_1^e:=\hxi_{1}^e(\vect y_{n_1},\ldots,\vect y_{n_M}),\ldots, \hxi_M^e:=\hxi_{M}^e(\vect y_{n_1},\ldots,\vect y_{n_M}).
\]
We write \eqref{eq:exciting-field}  in matrix form as
\begin{equation}\label{eq:Foldy-Lax matrix}
\mathbf{Z}_M (\vect\alpha) \, \mathbf{\Phi}^e=\mathbf{\Phi}^{inc} \,\, ,
\end{equation}
where $\mathbf{\Phi}^e=[\hxi^e_1,\ldots,\hxi^e_M]^T$
and $\mathbf{\Phi}^{inc}=[\hxi^{inc}_1,\ldots,\hxi^{inc}_M]^T$ are
vectors whose components are the exciting and incident fields on the $M$ scatterers,
respectively, and
\begin{equation}\label{eq:Zm}
    \big(Z_M(\vect\alpha)\big)_{ij}=
    \begin{cases}
        1 & i=j\\
        -\alpha_j\wG(\vect y_{n_i},\vect y_{n_j}) & i\neq j
    \end{cases}
\end{equation}
is the $M\times M$ Foldy-Lax matrix which depends on the reflectivities $\vect\alpha=[\alpha_1,\ldots,\alpha_M]^T$,
and on the positions of the scatterers $\vect y_{n_j}$.
With the solution of \eqref{eq:Foldy-Lax matrix}, we use \eqref{eq:scattered-field}
and \eqref{eq:Foldy-Lax} to compute the scattered data received at the array.

We extend the $M\times M$ matrix $\mathbf{Z}_M(\vect\alpha)$, defined only for pairwise combinations of
scatterers at $\vect y_{n_j}$, to a larger $K\times K$ matrix 
\begin{equation} \label{eq:Z}
    \big(Z(\vect\rho_0)\big)_{ij}=\begin{cases}
        1,& i=j\\
        -\rho_{0j}\wG(\vect y_{i},\vect y_{j}),& i\neq j
    \end{cases}
\end{equation}
which includes all pairwise combinations of the $K$ grid points $\vect y_j$
in the IW. Using \eqref{eq:Z} and \eqref{eq:sensingmatrix}, we can write the response
matix as \eqref{eq:responsematrix0}.

\section{Proof of results in \S\ref{sec:random medium}}\label{appendix:proofs}
In order to prove Proposition~\ref{prop:2nd moment}, we need to show the following lemma first.
\begin{lemma}\label{lem:autocorrelation}
    Assume the autocorrelation function $R$ of the random field in $\mL_2(\mR_+)$ with derivative $\dot{R}$ such that
    $\frac{\dot{R}(t)}{t}\in\mL(\mR_+)$ and both $R$ and $\dot{R}$ decay exponentially. Then
    \[\int_0^\infty\frac{\dot{R}(t)}{t}\,\rmd t<0.\]
\end{lemma}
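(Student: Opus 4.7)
The plan is to invoke Bochner's theorem on the even extension of $R$. Since $R$ arises as the autocorrelation of a real, stationary random field, the function $t\mapsto R(|t|)$ is a continuous positive definite function on $\mR$, so Bochner yields a non-negative finite measure $\mu$ on $[0,\infty)$ with $\mu([0,\infty))=R(0)=1$ such that
\begin{equation*}
R(t)=\int_0^\infty \cos(\xi t)\,\rmd\mu(\xi),\qquad t\ge 0.
\end{equation*}
Differentiating (permissible under the exponential-decay hypotheses) gives $\dot R(t)=-\int_0^\infty \xi\sin(\xi t)\,\rmd\mu(\xi)$, which reduces the sign question for $\int_0^\infty \dot R(t)/t\,\rmd t$ to a manipulation of integrals against the positive measure $\mu$.

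I would next fix $T>0$ and, using Fubini on the bounded interval $[0,T]$, write
\begin{equation*}
\int_0^T\frac{\dot R(t)}{t}\,\rmd t \;=\; -\int_0^\infty \xi\,\Phi_T(\xi)\,\rmd\mu(\xi),\qquad \Phi_T(\xi):=\int_0^T\frac{\sin(\xi t)}{t}\,\rmd t.
\end{equation*}
The only facts about $\Phi_T$ that I will use are the classical ones: $\Phi_T(\xi)\ge 0$ for all $\xi,T>0$, $\Phi_T$ is uniformly bounded in $T$ and $\xi$, and $\Phi_T(\xi)\to \pi/2$ as $T\to\infty$ for every $\xi>0$. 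Letting $T\to\infty$, the left-hand side converges by the hypothesis $\dot R/t\in\mL(\mR_+)$. Applying Fatou's lemma to the non-negative integrands $\xi\Phi_T(\xi)$ transfers this finiteness into $\int_0^\infty \xi\,\rmd\mu(\xi)<\infty$, after which dominated convergence (with dominant $C\xi$) produces
\begin{equation*}
\int_0^\infty\frac{\dot R(t)}{t}\,\rmd t \;=\; -\frac{\pi}{2}\int_0^\infty \xi\,\rmd\mu(\xi).
\end{equation*}

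To conclude strict negativity it suffices to rule out $\mu=\delta_0$: otherwise $R\equiv R(0)=1$, contradicting $R\in\mL_2(\mR_+)$. Hence $\mu$ charges $(0,\infty)$ and the right-hand side is strictly negative, as required.

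The main obstacle is the integrability bookkeeping at the passage $T\to\infty$: the measure $\mu$ does not carry an a priori finite first moment, so one cannot swap limit and integral naively. The trick is to use the assumption $\dot R/t\in\mL(\mR_+)$ twice — first to guarantee that the left-hand side has a finite limit, and then, via Fatou applied to the non-negative kernel $\xi\Phi_T(\xi)$, to promote this finite limit into the finite first moment of $\mu$ that legitimizes dominated convergence on the right.
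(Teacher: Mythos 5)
Your route is genuinely different from the paper's. The paper works with the three-dimensional Fourier transform of the radial function $R(|\vect x|)$, reduces it by integration by parts to an expression involving $\dot R(r)/r$, and invokes the Riemann--Lebesgue lemma to identify $\int_0^\infty r^3\widehat R(r)\,\rmd r$ with $-4\pi\int_0^\infty \dot R(t)/t\,\rmd t$; you instead restrict the field to a line, apply one-dimensional Bochner, and aim for $\int_0^\infty \dot R(t)/t\,\rmd t=-\tfrac{\pi}{2}\int_0^\infty\xi\,\rmd\mu(\xi)$. That identity is correct (it checks on the Gaussian $R(t)=\rme^{-t^2/2}$), and your closing observation --- that $\mu=\delta_0$ is excluded by $R\in\mL_2(\mR_+)$ --- actually delivers the \emph{strict} inequality claimed by the lemma, whereas the paper's own proof only concludes that the integral ``is not positive.''

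However, the two steps you wave through are precisely where the difficulty sits, and as written they are circular. Exponential decay of $R$ and $\dot R$ controls the regularity of the spectral measure $\mu$, not its moments, so it does not license $\dot R(t)=-\int_0^\infty\xi\sin(\xi t)\,\rmd\mu(\xi)$ as an absolutely convergent identity: for $R(t)=\rme^{-t}$ one has $\rmd\mu(\xi)\propto(1+\xi^2)^{-1}\rmd\xi$ with infinite first moment, and that example is ruled out only by the hypothesis $\dot R/t\in\mL(\mR_+)$, i.e., by the very integrability you are trying to extract. Likewise the Fubini step needs $\int_0^\infty\xi\int_0^T|\sin(\xi t)|/t\,\rmd t\,\rmd\mu(\xi)\approx\int_0^\infty\xi\log(2+\xi T)\,\rmd\mu(\xi)<\infty$, which is essentially the conclusion; Fatou cannot rescue an identity that has not yet been established. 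The repair is to integrate by parts in $t$ \emph{before} touching the spectral side: using $1-R(\delta)=o(\delta)$ (itself a consequence of $\dot R/t\in\mL(\mR_+)$) one obtains
\begin{equation*}
\int_0^\infty\frac{\dot R(t)}{t}\,\rmd t=-\int_0^\infty\frac{1-R(t)}{t^2}\,\rmd t,
\end{equation*}
and since $1-R(t)=\int_0^\infty(1-\cos(\xi t))\,\rmd\mu(\xi)$ has a non-negative integrand, Tonelli applies unconditionally and evaluates the right-hand side to $-\tfrac{\pi}{2}\int_0^\infty\xi\,\rmd\mu(\xi)$ via $\int_0^\infty(1-\cos u)\,u^{-2}\,\rmd u=\pi/2$. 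With that substitution your argument closes.
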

\begin{proof}
    Since $R\in\mL_2(\mR_+)$, the Fourier transform exists, i.e.
\[\widehat{R}(|\vect k|)=\int_{\mR^3}R(|\vect x|)\rme^{\rmi\vect k\cdot\vect x}\,\rmd\vect x\, .\]
Moreover, $\widehat R(\cdot)\ge0$. Using spherical coordinate $(r,\phi,\theta)$ we obtain
\begin{eqnarray*}
\widehat R(|\vect k|)&=&\int_0^{2\pi}\int_0^\pi\int_0^\infty R(r)r^2\sin\phi\,\rme^{\rmi|\vect k|r\cos\phi}\,\rmd r\,\rmd\phi\,\rmd\theta\\
&=&2\pi\int_0^\infty R(r)r^2\int_0^\pi\sin\phi\,\rme^{\rmi|\vect k|r\cos\phi}\,\rmd r\,\rmd\phi\\
&=&4\pi\int_0^\infty R(r)r\frac{\sin|\vect k|r}{|\vect k|}\,\rmd r
\end{eqnarray*}
Since the correlation function $R(\cdot)$ is defined only on $[0,+\infty)$, we extend it
symmetrically to the whole real line by $R(-\cdot)=R(\cdot)$ for any value in
$(-\infty,0)$. Then, the above integral becomes
\begin{eqnarray*}
\widehat R(|\vect k|)=\frac{2\pi}{|\vect k|}\int_{-\infty}^\infty R(r)r\sin|\vect k|r\,\rmd r
=-\frac{2\pi}{|\vect k|^3}\int_{-\infty}^\infty \dot{R}(r)(\sin|\vect k|r-|\vect k|r\cos|\vect k|r)\,\rmd r,
\end{eqnarray*} where the last equality holds due to integration by parts and
the exponential decay of $R(\cdot)$. Then, with Riemann Lebesgue lemma, the following
limit is
\begin{eqnarray*}
\lim_{K\rightarrow\infty}\int_0^K|\vect k|^3\widehat R(|\vect k|)\,\rmd|\vect k|
=-2\pi\int_{-\infty}^\infty\frac{\dot{R}(r)}{r}\,\rmd r.
\end{eqnarray*} 
The symmetric extension of $R(\cdot)$ implies that $\dot{R}(r)=-\dot{R}(-r)$ when $r<0$. Therefore,
\[
\int_0^\infty r^3\widehat{R}(r)\,\rmd r=\lim_{K\rightarrow\infty}\int_0^K|\vect k|^3\widehat R(|\vect k|)\,\rmd|\vect k|=-4\pi\int_0^\infty\frac{\dot{R}(r)}{r}\,\rmd r\, ,
\]
which implies that the integral on the right hand side is not positive.
\end{proof}

Note that the conditions on the autocorrelation function $R$ in Lemma~\ref{lem:autocorrelation}
are not restrictive. Many autocorrelation functions satisfy those conditions, as for example,
$R(|\vect x|)=\rme^{-\frac{1}{2}|\vect x|^2}$ for a Gaussian random field and the power law
function $R(|\vect x|)=(1+|\vect x|)\rme^{-|\vect x|}$. With this lemma, we can now prove Proposition~\ref{prop:2nd moment}.

\begin{proof}[Proof of Proposition~\ref{prop:2nd moment}]
    To simplify the notation, we first define the path integral of
the random process function $\mu(\cdot)$ as
\[\nu(\vect x,\vect y)=\int_0^1\mu\left(\frac{\vect x}{l}+s\frac{\vect y-\vect x}{l}\right)\,\rmd s,\]
and 
\[
\vartheta(\vect x,\vect y)=|\vect x-\vect y|\nu(\vect x,\vect y).\]
We need to compute moment estimation
\[\mE(\wG(\vect x,\vect y_1)\overline{\wG(\vect x,\vect y_2)})
=\wG_0(\vect x,\vect y_1)\overline{\wG_0(\vect x,\vect y_2)}E\, ,\]
where
\begin{equation}\label{eq:exponent}
E=\mE\left(\rme^{-\rmi\kappa\sigma(|\vect x-\vect y_1|\nu(\vect x,\vect y_1)-|\vect x-\vect y_2|\nu(\vect x,\vect y_2))}\right)
=\rme^{-\frac{1}{2}\kappa^2\sigma^2\mE\left(\vartheta(\vect x,\vect y_1)-\vartheta(\vect x,\vect y_2)\right)^2}.
\end{equation}
Because $|\vect y_1-\vect y_2|\ll L$ we can write
\[|\vect x-\vect y_2|=|\vect x-\vect y_1|+\frac{(\vect x-\vect y_1)^T}{|\vect x-\vect y_1|}(\vect y_2-\vect y_1)+o(|\vect y_2-\vect y_1|),\]
and then
\[\nu(\vect x,\vect y_2)=\nu(\vect x,\vect y_1)+\nabla_{\vect y_1}^T\nu(\vect x,\vect y_1)(\vect y_2-\vect y_1)+o(|\vect y_2-\vect y_1|).\]
Dropping terms of order $2$ or higher, we estimate the expectation in the exponent as
\begin{eqnarray*}
\mE\Big(\vartheta(\vect x,\vect y_1)-\vartheta(\vect x,\vect y_2)\Big)^2\\
&\hskip-1cm\approx\mE\left(\nu(\vect x,\vect y_1)\frac{(\vect x-\vect y_1)^T}{|\vect x-\vect y_1|}(\vect y_2-\vect y_1)
    +|\vect x-\vect y_1|\nabla_{\vect y_1}^T\nu(\vect x,\vect y_1)(\vect y_2-\vect y_1)\right)^2.
\end{eqnarray*}
For simplicity, we consider $\vect y_1$ and $\vect y_2$ on the same plane, at a distance $L$ from point $\vect x = (0,0,0)$,
so  $\vect y_1=(0,0,L)$ and$\vect y_2=(\xi,\eta,L)$. Then, $(\vect y_2-\vect y_1)^T(\vect x-\vect y_1)=0$ and
the variance is reduced to the estimation of 
\begin{equation}\label{eq:expectation}
    \begin{split}
        \mE&\left(\nabla^T_{\vect y}\nu(\vect x,\vect y)(\vect y'-\vect y)\right)^2\\
        &=\xi^2\mE\left(\frac{\partial}{\partial y_1}\nu(\vect x,\vect y)\right)^2+\eta^2\mE\left(\frac{\partial}{\partial y_2}\nu(\vect x,\vect y)\right)^2+2\xi\eta\mE\left(\frac{\partial}{\partial y_1}\nu(\vect x,\vect y)\frac{\partial}{\partial y_2}\nu(\vect x,\vect y)\right)\\
        &=\left(\xi^2\frac{\partial}{\partial y_1}\frac{\partial}{\partial y'_1}\mE(\nu(\vect x,\vect y)\nu(\vect x,\vect y'))+\eta^2\frac{\partial}{\partial y_2}\frac{\partial}{\partial y'_2}\mE(\nu(\vect x,\vect y)\nu(\vect x,\vect y'))\right.\\
        &\hskip2cm\left.\left.+2\xi\eta\frac{\partial}{\partial y_1}\frac{\partial}{\partial y'_2}\mE(\nu(\vect x,\vect y)\nu(\vect x,\vect y'))\right)\right|_{\vect y'=\vect y},
    \end{split}
\end{equation}
where $\frac{\partial}{\partial y_i}$ means the derivative with respect to the $i^\mathrm{th}$ component of vector $\vect y$.

Denote $\tilde{y'}=[\xi,\eta]^T$ and $\tilde y=[0,0]^T$, such that $\vect y=[\tilde{y}^T, L]^T$ and
$\vect y'=[{\tilde{y'}}^T, L]^T$, and let $\varsigma=\sqrt{\frac{1}{l^2}|s\tilde y-s'\tilde y'|^2+\frac{L^2}{l^2}(s-s')^2}$.
We compute the derivatives of the expectations 
\begin{equation*}
    \mE(\nu(\vect x,\vect y)\nu(\vect x,\vect y'))=\int_0^1\int_0^1R(\varsigma)\,\rmd s'\,\rmd s
\end{equation*}
in \eqref{eq:expectation} as follows:
\begin{equation*}
    \frac{\partial}{\partial y_1}\mE(\nu(\vect x,\vect y)\nu(\vect x,\vect y'))=\int_0^1\int_0^1\dot{R}(\varsigma)
\frac{s}{\varsigma l^2}(sy_1-s'y'_1)\,\rmd s'\,\rmd s \, ,
\end{equation*}

\begin{equation*}
    \begin{split}
        \frac{\partial^2}{\partial y'_1\partial y_1}&\mE(\nu(\vect x,\vect y)\nu(\vect x,\vect y'))\\
        &=\int_0^1\int_0^1\left(\dot{R}(\varsigma)\frac{ss'(sy_1-s'y'_1)^2}{\varsigma^3l^4}-\dot{R}(\varsigma)\frac{ss'}{\varsigma l^2}-\ddot{R}(\varsigma)\frac{ss'}{\varsigma^2 l^4}(sy_1-s'y'_1)^2 \right)\,\rmd s'\,\rmd s\, ,
    \end{split}
\end{equation*}

\begin{equation*}
\frac{\partial^2}{\partial y_1\partial y'_2}\mE(\nu(\vect x,\vect y)\nu(\vect x,\vect y'))=\int_0^1\int_0^1\frac{ss'}{\varsigma^2l^4}\left(\frac{\dot{R}(\varsigma)}{\varsigma}-\ddot{R}(\varsigma)\right)(sy_1-s'y'_1)(sy_2-s'y'_2)\,\rmd s'\,\rmd s.
\end{equation*}
Taking $\vect y=\vect y'=(0,0,L)$ in the above second order derivatives, we obtain that
\begin{equation*}
    \mE\Big(\frac{\partial}{\partial y_1}\nu(\vect x,\vect y)\frac{\partial}{\partial y_2}\nu(\vect x,\vect y)\Big)=0 \, ,
\end{equation*}
so that
\begin{equation*}
    \mE\left(\nabla^T_{\vect y}\nu(\vect x,\vect y)(\vect y'-\vect y)\right)^2=(\xi^2+\eta^2)\mE\Big(\frac{\partial}
    {\partial y_1}\nu(\vect x,\vect y)\Big)^2\, .
\end{equation*}
Next, we compute
\begin{eqnarray*}
\mE\left(\frac{\partial}{\partial y_1}\nu(\vect x,\vect y)\right)^2&=&-\int_0^1\int_0^1\dot{R}\left(\frac{L|s-s'|}{l}\right)\frac{ss'}{lL|s-s'|}\,\rmd s'\,\rmd s\\
&=&\frac{2l^2}{3L^4}\int_0^{L/l}sR(s)\rmd s-\frac{2}{3lL}\int_0^{L/l}\frac{\dot{R}(t)}{t}\,\rmd t+\frac{2}{3L^2}R\left(\frac{L}{l}\right)-\frac{R(0)}{L^2}\\
&\approx&\frac{1}{L^2}\left(-1-\frac{2L}{3l}\int_0^\infty\frac{\dot{R}(t)}{t}\,\rmd t\right),
\end{eqnarray*}
where the last approximation is based on the condition $l\ll L$ and $R(\cdot)$ has exponential decay with normalization $R(0)=1$.
Now, let $a_e^2=\sigma^2L^4\mE\left(\frac{\partial}{\partial y_1}\nu(\vect x,\vect y)\right)^2$. We then have
\begin{equation}\label{eq:difference square expectation}
\mE(\vartheta(\vect x,\vect y_1)-\vartheta(\vect x,\vect y_2))^2\approx|\vect x-\vect y_1|^2\mE(\nabla^T_{\vect y}\nu(\vect x,\vect y_1)(\vect y_2-\vect y_1))^2\approx\frac{|\vect y_1-\vect y_2|^2}{\sigma^2L^2}a_e^2 \, ,
\end{equation}
and using \eqref{eq:exponent}
\begin{equation*}
\mE(\wG(\vect x,\vect y_1)\overline{\wG(\vect x,\vect y_2)})=\wG_0(\vect x,\vect y_1)\overline{\wG_0(\vect x,\vect y_2)}E
\approx\wG_0(\vect x,\vect y_1)\overline{\wG_0(\vect x,\vect y_2)}\rme^{-\frac{\kappa^2a_e^2}{2L^2}|\vect y_1-\vect y_2|^2},
\end{equation*}
i.e. the moment estimate \eqref{eq:2nd moment}.
\end{proof}

\begin{proof}[Proof of Corollary~\ref{cor:variance estimate}]
        The result is a direct application of Proposition~\ref{prop:2nd moment}.
Let $\mathcal{C}=\frac{1}{(4\pi|\vect x-\vect y_1|)^2(4\pi|\vect x-\vect y_2|)^2}$.
Using \eqref{eq:2nd moment}, we have
\begin{equation*}
    \begin{split}
        \mE&\left|\wG(\vect x,\vect y_1)\overline{\wG(\vect x,\vect y_2)}-\mE\left(\wG(\vect x,\vect y_1)\overline{\wG(\vect x,\vect y_2)}\right)\right|^2\\
           &\approx\mE\left|\wG_0(\vect x,\vect y_1)\overline{\wG_0(\vect x,\vect y_2)}\left(\rme^{\rmi\kappa\sigma\left(\vartheta(\vect x,\vect y_1)-\vartheta(\vect x, \vect y_2)\right)}-\rme^{-\frac{1}{2}\kappa^2\frac{a_e^2}{L^2}|\vect y_1-\vect y_2|^2}\right)\right|^2\\
           &=\mathcal{C}\mE\big(1+\rme^{-\frac{\kappa^2a_e^2}{L^2}|\vect y_1-\vect y_2|^2}-\rme^{-\frac{\kappa^2a_e^2}{2L^2}|\vect y_1-\vect y_2|^2}(\rme^{\rmi\kappa\sigma(\vartheta(\vect x, \vect y_1)-\vartheta(\vect x, \vect y_2))}+\rme^{-\rmi\kappa\sigma(\vartheta(\vect x, \vect y_1)-\vartheta(\vect x, \vect y_2))})\big)\\
           &=\mathcal{C}\big(1+\rme^{-\frac{\kappa^2a_e^2}{L^2}|\vect y_1-\vect y_2|^2}-2\rme^{-\frac{\kappa^2a_e^2}{2L^2}|\vect y_1-\vect y_2|^2}\te^{-\frac{1}{2}\kappa^2\sigma^2\mE(\vartheta(\vect x, \vect y_1)-\vartheta(\vect x,\vect y_2))^2}\big)\\
           &\approx\mathcal{C}\big(1-\rme^{-\kappa^2\frac{a_e^2}{L^2}|\vect y_1-\vect y_2|^2}\big),
    \end{split}
\end{equation*}
where we use the estimate \eqref{eq:difference square expectation} in the last step.
\end{proof}

\begin{proof}[Proof of Proposition~\ref{prop:statistical stability}]
    For simplicity, we use the same configuration as in the proof of Proposition~\ref{prop:2nd moment},
    where $\vect y_1=[0,0,L]^T$ and $\vect y_2=[\xi,\eta,L]^T$. We first look at the denominator of the
    ratio in \eqref{eq:statistical stability}
\begin{equation}\label{norm of random green vector}
\|\vect\wg(\vect y_1)\|^2=\sum_{j=1}^{N}\Big|\wG(\vect x_j,\vect y_1)\Big|^2
=\sum_{j=1}^{N}\Big|\wG_0(\vect x_j,\vect y_1)\Big|^2=\|\vect\wg_0(\vect y_1)\|^2.
\end{equation}
Using the same approach as in the proof in \cite{CMP13}, under continuous limit, we have 
\begin{eqnarray*}
\|\vect\wg(\vect y_1)\|^2\approx\frac{1}{(4\pi h)^2}\int_{\Omega(\vect x)}\frac{\rmd\vect x}{|\vect x-\vect y|^2}
=\frac{1}{16\pi h^2}\log\Big(1+\frac{a^2}{4L^2}\Big).
\end{eqnarray*}
On the other hand, the numerator can be computed as follows
\begin{equation*}
    \begin{split}
        \mE&\left|\vect\wg^\ast(\vect y_1)\vect\wg(\vect y_2)-\mE\left(\vect\wg^\ast(\vect y_1)\vect\wg(\vect y_2)\right)\right|^2\\
           &=\mE\left|\sum_{j=1}^{N}\wG_0(\vect x_j,\vect y_1)\overline{\wG_0(\vect x_j,\vect y_2)}\left(\rme^{\rmi\kappa\sigma(\vartheta(\vect x_j,\vect y_1)-\vartheta(\vect x_j,\vect y_2))}-\rme^{-\frac{\kappa^2a^2_e}{2L^2}|\vect y_1-\vect y_2|^2}\right)\right|^2\\
           &=\sum_{j,j'=1}^{N}\wG_0(\vect x_j,\vect y_1)\overline{\wG_0(\vect x_j,\vect y_2)}\overline{\wG_0(\xjp,\vect y_1)}\wG_0(\xjp,\vect y_2)\\
           &\ \ \ \ \ \times\mE\big(\rme^{\rmi\kappa\sigma(\vartheta(\vect x_j,\vect y_1)-\vartheta(\vect x_j,\vect y_2)-\vartheta(\xjp,\vect y_1)+\vartheta(\xjp,\vect y_2))}\\
           &\ \ \ \ \ \ \ \ \ -\rme^{-\frac{\kappa^2a^2_e}{2L^2}|\vect y_1-\vect y_2|^2}\,\rme^{-\rmi\kappa\sigma(\vartheta(\xjp,\vect y_1)-\vartheta(\xjp,\vect y_2))}\\
           &\ \ \ \ \ \ \ \ \ -\rme^{-\frac{\kappa^2a^2_e}{2L^2}|\vect y_1-\vect y_2|^2}\,\rme^{-\rmi\kappa\sigma(\vartheta(\vect x_j,\vect y_1)-\vartheta(\vect x_j,\vect y_2))}+\rme^{-\frac{\kappa^2a_e^2}{L^2}|\vect y_1-\vect y_2|^2}\big)\\
           &=\sum_{j,j'=1}^{N}\wG_0(\vect x_j,\vect y_1)\overline{\wG_0(\vect x_j,\vect y_2)}\overline{\wG_0(\xjp,\vect y_1)}\wG_0(\xjp,\vect y_2)\\
           &\ \ \ \ \ \times\rme^{-\frac{\kappa^2a_e^2}{L^2}|\vect y_1-\vect y_2|^2}\left(\rme^{\kappa^2\sigma^2\mE(\vartheta(\vect x_j,\vect y_1)-\vartheta(\vect x_j,\vect y_2))(\vartheta(\xjp,\vect y_1)-\vartheta(\xjp,\vect y_2))}-1\right).
    \end{split}
\end{equation*}
From the above expression, we can see the variance of $\vect\wg^\ast(\vect y_1)\vect\wg(\vect y_2)$
is close to the denominator up to the factor
\begin{equation}\label{df}
    \rme^{-\frac{\kappa^2a_e^2}{L^2}|\vect y_1-\vect y_2|^2}\left(\rme^{\kappa^2\sigma^2\mE(\vartheta(\vect x_j,\vect y_1)-\vartheta(\vect x_j,\vect y_2))(\vartheta(\xjp,\vect y_1)-\vartheta(\xjp,\vect y_2))}-1\right).
\end{equation}
The expectation
\[
    \mE(\vartheta(\vect x_j,\vect y_1)-\vartheta(\vect x_j,\vect y_2))(\vartheta(\xjp,\vect y_1)-\vartheta(\xjp,\vect y_2))
\]
is nonzero only when there is strong correlation between the path starting from $\vect x_j$ and $\xjp$. This is controlled by the 
correlation length $l$ of the random medium. When those paths are within $l$, the value of \eqref{df} is reduced to
$
    1-\rme^{-\frac{\kappa^2a_e^2}{L^2}|\vect y_1-\vect y_2|^2},
$
and otherwise \eqref{df} is equal to zero. Thus in the continuous limit, the value of numerator can be approximated by
\begin{equation}\label{variance of random green vector}
    \begin{split}
        \mE|\vect\wg^\ast(\vect y_1)\vect\wg(\vect y_2)&-\mE\left(\vect\wg^\ast(\vect y_1)\vect\wg(\vect y_2)\right)|^2\approx\frac{1}{h^4}\Big(1-\rme^{-\frac{\kappa^2a_e^2}{L^2}|\vect y_1-\vect y_2|^2}\Big)\\
                                                       &\times\int_{\Omega(\vect x)}\wG_0(\vect x,\vect y_1)\overline{\wG_0(\vect x,\vect y_2)}\,\rmd\vect x\overline{\int_{B(\vect0,l)}\wG_0(\vect x',\vect y_1)\overline{\wG_0(\vect x',\vect y_2)}\,\rmd\vect x'},
    \end{split}
\end{equation}
where $B(\vect0,l)$ is a ball centered at $\vect0$ with radius $l$ within which factor \eqref{df}
is not equal to zero. 

Therefore the ratio on the left handside of \eqref{eq:statistical stability} is bounded by
\begin{multline*}
    \left|\frac{\mE|\vect\wg^\ast(\vect y_1)\vect\wg(\vect y_2)-\mE(\vect\wg^\ast(\vect y_1)\vect\wg(\vect y_2))|^2}{\|\vect\wg(\vect y_1)\|^2\|\vect\wg(\vect y_2)\|^2}\right|\\
    \begin{aligned}
        &\approx\frac{256\pi^2\Big(1-\rme^{-\frac{\kappa^2a_e^2}{L^2}|\vect y_1-\vect y_2|^2}\Big)}{\log^2\Big(1+\frac{a^2}{4L^2}\Big)}\left|\int_{\Omega(\vect x)}\wG_0(\vect x,\vect y_1)\overline{\wG_0(\vect x,\vect y_2)}\,\rmd\vect x\right|\\
        &\ \ \ \ \ \ \times\left|\int_{B(\vect0,l)}\wG_0(\vect x',\vect y_1)\overline{\wG_0(\vect x',\vect y_2)}\,\rmd\vect x'\right|\\
        &\le\frac{\Big(1-\rme^{-\frac{\kappa^2a_e^2}{L^2}|\vect y_1-\vect y_2|^2}\Big)}{\log^2\Big(1+\frac{a^2}{4L^2}\Big)}\int_{B(\vect0,l)}\frac{\rmd\vect x}{|\vect x-\vect y|^2}\int_{\Omega(\vect x)}\frac{\rmd\vect x}{|\vect x-\vect y|^2}\\
        &\approx\frac{l^2\Big(1-\rme^{-\frac{\kappa^2a_e^2}{L^2}|\vect y_1-\vect y_2|^2}\Big)}{L^2\log\Big(1+\frac{a^2}{4L^2}\Big)}.
    \end{aligned}
\end{multline*}
When size of array $a$ increases, we have
$\frac{\mE|\vect\wg^\ast(\vect y_1)\vect\wg(\vect y_2)-\mE(\vect\wg^\ast(\vect y_1)\vect\wg(\vect y_2))|^2}
{\|\vect\wg(\vect y_1)\|^2\|\vect\wg(\vect y_2)\|^2}$ goes to $0$ logarithmically.
\end{proof}

\begin{remark}\label{rem:paraxial}
    {\rm The result in Proposition~\ref{prop:statistical stability} holds for any regime no matter $a\ll L$ or $L\lessapprox a$.
When $a\ll L$, i.e. in the paraxial regime, we can use parabolic approximation to compute the
approximate value for the ratio on the lefthand side of \eqref{eq:statistical stability}.
Let $\vect x=[x_1,x_2,0]^T$.
Then we have 
\[|\vect x-\vect y_1|=\sqrt{x_1^2+x_2^2+L^2}\approx L+\frac{x_1^2+x_2^2}{2L}\] and
\[|\vect x-\vect y_2|=\sqrt{L^2+(x_1-\xi)^2+(x_2-\eta)^2}=L+\frac{(x_1-\xi)^2+(x_2-\eta)^2}{2L}.\]
Since $|\vect y_1-\vect y_2|\le a\ll L$, we approximate in denominator
\[|\vect x-\vect y_1|\approx|\vect x-\vect y_2|\approx L,\] 
and for the two integrals in \eqref{variance of random green vector},
\begin{eqnarray*}
\int_{\Omega(\vect x)}\wG_0(\vect x,\vect y_1)\overline{\wG_0(\vect x,\vect y_2)}\,\rmd\vect x&\approx&
\frac{\rme^{-\rmi\kappa\frac{\xi^2+\eta^2}{2L}}}{L^2}\int_{\Omega(\vect x)}\rme^{\rmi\kappa\frac{x_1\xi+x_2\eta}{L}}\,\rmd\vect x\\
&=&\frac{\rme^{-\rmi\kappa\frac{\xi^2+\eta^2}{2L}}}{L^2}\int_{-a/2}^{a/2}\rme^{\rmi\kappa\frac{x_1\xi}{L}}\,\rmd x_1 \int_{-a/2}^{a/2}\rme^{\rmi\kappa\frac{x_2\eta}{L}}\,\rmd x_2\\
&=&\frac{a^2\rme^{-\rmi\kappa\frac{\xi^2+\eta^2}{2L}}}{\pi L^2}\sinc\Big(\frac{a\xi}{\lambda L}\Big)\sinc\Big(\frac{a\eta}{\lambda L}\Big),
\end{eqnarray*}
and similarly,
\begin{equation*}
\int_{B(\vect 0,l)}\wG_0(\vect x,\vect y_1)\overline{\wG_0(\vect x,\vect y_2)}\,\rmd\vect x\approx
\frac{a^2\rme^{-\rmi\kappa\frac{\xi^2+\eta^2}{2L}}}{\pi L^2}\sinc\Big(\frac{l\xi}{\lambda L}\Big)\sinc\Big(\frac{l\eta}{\lambda L}\Big).
\end{equation*}
Note that when $a\ll L$, we can approximate linearly
$
    \log\Big(1+\frac{a^2}{4L^2}\Big)\approx\frac{a^2}{4L^2}.
$
Therefore, the lefthand side of \eqref{eq:statistical stability} is approximately equal to
\begin{multline}
    \frac{\mE\left|\vect\wg^\ast(\vect y_1)\vect\wg(\vect y_2)-\mE\left(\vect\wg^\ast(\vect y_1)\vect\wg(\vect y_2)\right)\right|^2}{\|\vect\wg(\vect y_1)\|^2\|\vect\wg(\vect y_2)\|^2}\numberthis\label{eq:paraxial approx}\\
    \begin{aligned}
    &\approx\frac{a^2l^2\Big(1-\rme^{-\frac{\kappa^2a^2_e}{L^2}|\vect y_1-\vect y_2|^2}\Big)\sinc\Big(\frac{\xi a}{\lambda L}\Big)\sinc\Big(\frac{\eta a}{\lambda L}\Big)\sinc\Big(\frac{\xi l}{\lambda L}\Big)\sinc\Big(\frac{\eta l}{\lambda L}\Big)}{\frac{L^4\pi^2}{256\pi^2}\frac{a^2}{4L^2}\frac{a^2}{4L^2}}\nonumber\\
    &\approx256\pi^2\Big(1-\rme^{-\frac{\kappa^2a_e^2}{L^2}|\vect y_1-\vect y_2|^2}\Big)\Big(\frac{l}{a}\Big)^2\sinc\Big(\frac{\xi a}{\lambda L}\Big)\sinc\Big(\frac{\eta a}{\lambda L}\Big)\sinc\Big(\frac{\xi l}{\lambda L}\Big)\sinc\Big(\frac{\eta l}{\lambda L}\Big).
    \end{aligned}
\end{multline}
This matches the result of logarithmic decay bound in general.
}
\end{remark}

\begin{proof}[Proof of Proposition~\ref{prop:asymp orthogonality}]
    We first look at \eqref{eq:o1} for back-propagation in the true random media scenario.
It has been shown in Proposition~\ref{prop:statistical stability}
    that when size of array is large, the single realization will approach the average value.
    Therefore, it is enough to show the average value goes to zero as points $\vect y_1$ and
    $\vect y_2$ are far apart and the result is true due to the Chebyshev inequality under
    probability measure $\mathbb{P}$ induced from random field $\mu(\cdot)$. Using the moment formula, we have
\begin{equation*}
\mE(\vect\wg^\ast(\vect y_1)\vect\wg(\vect y_2))
=\sum_{j=1}^{N}\overline{\wG_0(\vect x_j,\vect y_1)}\wG_0(\vect x_j,\vect y_2)
\rme^{-\frac{\kappa^2a_e^2}{2L^2}|\vect y_1-\vect y_2|^2}.
\end{equation*}
Since the multiplier factor $\rme^{-\frac{\kappa^2a_e^2}{2L^2}|\vect y_1-\vect y_2|^2}<1$ and
goes to zero as $\frac{|\vect y_1-\vect y_2|}{\lambda}$ increases,
we thus have
\[
    \mE(\vect\wg^\ast(\vect y_1)\vect\wg(\vect y_2))\rightarrow0,\qquad\frac{|\vect y_1-\vect y_2|}{\lambda}\rightarrow\infty
\]
especially when size of array $a$ becomes large. When $a_e$ is large in
random media, average value will decay to zero faster than that in homogeneous media. This 
is why in random media better resolution can be achieved.

Next, we look at \eqref{eq:o2}. First, we compute the expectation of $\nu^2(\vect x,\vect y)$
\begin{equation*}
\mE\nu^2(\vect x,\vect y)=2\int_0^1\int_0^sR\left(\frac{|\vect y-\vect x|}{l}(s-s')\right)\,\rmd s'\,\rmd s
=\frac{2l}{|\vect y-\vect x|}\int_0^1\int_0^{\frac{|\vect y-\vect x|}{l}s}R(s')\,\rmd s'\,\rmd s
\end{equation*}
Under $l\ll L$ and symmetric extension of $R(\cdot)$ to negative real line, we have 
\[\mE\nu^2(\vect x,\vect y)\approx\frac{2l}{|\vect y-\vect x|}\int_0^\infty R(s)\,\rmd s=\frac{l\widehat R(0)}{|\vect y-\vect x|}.\]
Then it is easy to calculate the expectation of mixed inner product as
\begin{eqnarray*}
\mE(\vect\wg^\ast_0(\vect y_1)\vect\wg(\vect y_2))&=&\sum_{j=1}^{N}\overline{\wG_0(\vect x_j,\vect y_1)}
\wG_0(\vect x_j,\vect y_2)\mE\rme^{\rmi\kappa\sigma|\vect x_j-\vect y_2|\nu(\vect x_j,\vect y_2)}\\
&=&\sum_{j=1}^{N}\overline{\wG_0(\vect x_j,\vect y_1)}\wG_0(\vect x_j,\vect y_2)\rme^{-\frac{1}{2}\kappa^2\sigma^2_0|\vect x_j-\vect y_2|^2\mE\nu^2(\vect x_j,\vect y_2)}\\
&\approx&\sum_{j=1}^{N}\overline{\wG_0(\vect x_j,\vect y_1)}\wG_0(\vect x_j,\vect y_2)\rme^{-\frac{1}{2}\kappa^2\sigma^2_0l\widehat R(0)|\vect x_j-\vect y_2|}.
\end{eqnarray*}
Because $\widehat R(0)\ge0$, it implies $\mE(\vect\wg^\ast_0(\vect y_1)\vect\wg(\vect y_2))$
is approximately equal to $\vect\wg_0^\ast(\vect y_1)\vect\wg_0(\vect y_2)$ multiplied 
by a factor less that $1$ and therefore 
\[\mE(\vect\wg^\ast_0(\vect y_1)\vect\wg(\vect y_2))\rightarrow0,\quad\frac{|\vect y_1-\vect y_2|}{\lambda}\rightarrow\infty.\]

To show the statistical stability of the mixed inner product, we first compute the bound of the numerator
similar to that in the proof of Proposition~\ref{prop:asymp orthogonality}.
\begin{equation*}
    \begin{split}
        \mE\Big|\vect\wg^\ast_0(\vect y_1)\vect\wg(\vect y_2)&-\mE\vect\wg_0^\ast(\vect y_1)\vect\wg(\vect y_2)\Big|^2\\
                                                             &\approx\mE\left|\sum_{j=1}^{N}\overline{\wG_0(\vect x_j,\vect y_1)}\wG_0(\vect x_j,\vect y_2)\Big(\rme^{\rmi\kappa\sigma\vartheta(\vect x_j,\vect y_2)}-\rme^{-\frac{1}{2}\kappa^2\sigma^2l\widehat R(0)|\vect x_j-\vect y_2|}\Big)\right|^2\\
                                                             &=\sum_{j,j'=1}^{N}\overline{\wG_0(\vect x_j,\vect y_1)}\wG_0(\vect x_j,\vect y_2)\wG_0(\xjp,\vect y_1)\overline{\wG_0(\xjp,\vect y_2)}\\
                                                             &\qquad\times\rme^{-\frac{1}{2}\kappa^2\sigma^2l\widehat R(0)(|\vect x_j-\vect y_2|+|\xjp-\vect y_2|)}\left(\rme^{\kappa^2\sigma^2\mE\vartheta(\vect x,\vect y_2)\vartheta(\xjp,\vect y_2)}-1\right).
    \end{split}
\end{equation*}
The correlation term $\vartheta(\vect x_j,\vect y_2)\vartheta(\xjp,\vect y_2)$ is nonzero only when the paths
connecting $\vect x_j$, $\xjp$ with $\vect y_2$ are within the correlation length $l$. Also according to
Cauchy-Schwartz inequality, 
\begin{equation*}
    \mE(\vartheta(\vect x_j,\vect y_2)\vartheta(\xjp,\vect y_2))\le\frac{1}{2}\left(\mE\vartheta^2(\vect x_j,\vect y_2)+\mE\vartheta^2(\xjp,\vect y_2)\right).
\end{equation*}
Therefore the numerator is approximately bounded by continuous limit
\begin{equation*}
    \begin{split}
        \mE\Big|&\vect\wg^\ast_0(\vect y_1)\vect\wg(\vect y_2)-\mE\vect\wg_0^\ast(\vect y_1)\vect\wg(\vect y_2)\Big|^2\\
                                                             &\lessapprox\frac{1}{h^4}\Big(1-\rme^{-\kappa^2\sigma^2Ll\widehat R(0)}\Big)\left|\int_{\Omega(\vect x)}\overline{\wG_0(\vect x,\vect y_1)}\wG_0(\vect x,\vect y_2)\,\rmd\vect x\right|\left|\int_{B(\vect 0,l)}\overline{\wG_0(\vect x,\vect y_1)}\wG_0(\vect x,\vect y_2)\,\rmd\vect x\right|\\
                                                             &\lessapprox\frac{1}{256\pi^2h^4}\Big(1-\rme^{-\kappa^2\sigma^2Ll\widehat R(0)}\Big)\log\Big(1+\frac{a^2}{4L^2}\Big)\log\Big(1+\frac{l^2}{4L^2}\Big).
    \end{split}
\end{equation*}
The denominator is norm of the Green function vector which has been calculated in the proof of 
Proposition~\ref{prop:statistical stability}. Thus the ratio in \eqref{eq:mixed stability} is bounded by
\[\frac{\mE\left|\vect\wg_0^\ast(\vect y_1)\vect\wg(\vect y_2)-\mE(\vect\wg_0^\ast(\vect y_1)\vect\wg(\vect y_2))\right|^2}
{\|\vect\wg_0(\vect y_1)\|^2\|\vect\wg(\vect y_2)\|^2}\lessapprox\Big(1-\rme^{-\kappa^2\sigma^2Ll\widehat R(0)}\Big)
\frac{\log\Big(1+\frac{l^2}{4L^2}\Big)}{\log\Big(1+\frac{a^2}{4L^2}\Big)}.\]
When $a\rightarrow\infty$, the right handside goes to zero. The decay rate is again controlled by
the logrithmic factor of $a$ and when $a\ll L$, the decay rate is quadratic of $a$ which
is the same as that in Proposition~\ref{prop:statistical stability}. Due to the statistical stability,
the mixed inner product satisfies \eqref{eq:o2}.
\end{proof}

\end{document}